\newtheorem{theorem}{Theorem}
\newtheorem{lemma}{Lemma}
\newtheorem{cor}{Corollary}
\theoremstyle{definition}
\newtheorem{defn}{Definition}
\theoremstyle{remark}
\newtheorem{clm}[theorem]{Claim}
\newtheorem{conj}{Conjecture}
\newcommand{\defref}[1]{\hyperref[#1]{Definition \ref*{#1}}}
\newcommand{\Defref}[1]{\hyperref[#1]{Definition \ref*{#1}}}
\newcommand{\lemref}[1]{\hyperref[#1]{Lemma \ref*{#1}}}
\newcommand{\Lemref}[1]{\hyperref[#1]{Lemma \ref*{#1}}}
\newcommand{\thmref}[1]{\hyperref[#1]{Theorem \ref*{#1}}}
\newcommand{\Thmref}[1]{\hyperref[#1]{Theorem \ref*{#1}}}
\newcommand{\coref}[1]{\hyperref[#1]{Corollary \ref*{#1}}}
\newcommand{\Corref}[1]{\hyperref[#1]{Corollary \ref*{#1}}}
 \tikzset{
          solid node/.style={circle,draw,inner sep=1.2,fill=black},
          green node/.style={circle,draw,inner sep=1.2,fill=green, draw=green},
          red node/.style={circle,draw,inner sep=1.2,fill=red, draw=red},
          blue node/.style={circle,draw,inner sep=1.2,fill=blue, draw=blue},
          yellow node/.style={circle,draw,inner sep=1.2,fill=yellow, draw=yellow},
          cyan node/.style={circle,draw,inner sep=1.2,fill=cyan, draw=cyan},
          hollow node/.style={circle,draw,inner sep=1.2},
          big node/.style={elipse, draw=green!60, fill=green!5},
          big solid node/.style={circle,draw,inner sep=1.2,fill=black, minimum size=0.3cm},
          big green node/.style={circle,draw,inner sep=1.2,fill=green, draw=green, minimum size=0.3cm},
          big red node/.style={circle,draw,inner sep=1.2,fill=red, draw=red, minimum size=0.3cm},
          big blue node/.style={circle,draw,inner sep=1.2,fill=blue, draw=blue, minimum size=0.3cm},
          big yellow node/.style={circle,draw,inner sep=1.2,fill=yellow, draw=yellow, minimum size=0.3cm},
          big cyan node/.style={circle,draw,inner sep=1.2,fill=cyan, draw=cyan, minimum size=0.3cm},
          left label/.style={above left,midway},
          right label/.style={above right,midway}
        }
\newcommand*{\threesim}{%
    \mathrel{
        \vcenter{
            \offinterlineskip
            \hbox{$\sim$}\vskip-.35ex\hbox{$\sim$}\vskip-.35ex\hbox{$\sim$}
            }
        }
    }
\tikzstyle{dot}=[fill=black, draw=black, shape=circle]
\tikzstyle{red_dot}=[fill=red, draw=red, shape=circle]
\tikzstyle{green_dot}=[fill=green, draw=green, shape=circle]
\tikzstyle{yellow_dot}=[fill=yellow, draw=yellow, shape=circle]
\tikzstyle{blue_dot}=[fill=blue, draw=blue, shape=circle]
\tikzstyle{orange_dot}=[fill=orange, draw=orange, shape=circle]
\tikzstyle{arrow}=[->]
\begin{document}
    \title{Classification of  Label-Regular Directed Trees up to Almost Isomorphism}
    \author{Roman Gorazd\footnote{University of Newcastle}}
    \maketitle
    \begin{abstract}
        This paper outlines a method to determine whether two label-regular directed trees, are isomorphic and when they are almost isomorphic. The approach involves reinterpreting label-regular directed trees as universal covers of rooted graphs. This allows us associate a unique graph with each isomorphism class of a label-regular directed tree. Additionally, by examining the graph monoid we can verify when two unfolding graphs produce almost isomorphic unfolding trees, thereby classifying unfolding trees up to almost isomorphism.
    \end{abstract}

    \section*{Highlights}
    \begin{itemize}
        \item Showing when two unfolding trees of directed graphs are isomorphic via non-edge-collapsing equivalence relations.
        \item Showing when two unfolding trees of directed graphs are almost isomorphic using the graph monoid
        \item Conjecture an explicit classification of unfolding trees of graphs with two vertices. 
    \end{itemize}
    \newpage
    \section{Introduction}
    In the study of totally disconnected locally compact groups, one of the most important examples are groups acting on trees. In the the context of totally disconnected locally compact groups, an often explored family of trees are label regular trees, i.e. trees with a colouring where the neighbourhood of two vertices with the same colour is coloured the same way. When we look at rooted trees oriented away from the root and impose the same condition except just looking at the outgoing neighbourhood, we obtain the class of label-regular directed trees, that in fact contains label-regular trees (with any direction). This change of the definition allows us to easier determine the structure of almost automorphisms on the trees as well as allows us to describe them as unfolding trees of directed graphs.\\
    Finite-depth unfolding trees of graphs are a tool often used while studying neural networks as in \cite{NeuralNet2009}. In this paper we will explore the infinite depth unfolding tree that is a slight expansion of the unfolding trees introduced in \cite{lederle2020topological}. We will draw a connection between the structure of these trees and the graph monoids of the respective graphs. In fact one can show that the word problem in graph monoids is computationally equivalent to the problem of determining when the unfolding trees of two graphs are almost isomorphic. This in turn leads us to estimate the complexity of calculating when two graphs produce almost isomorphic  unfolding trees using the results from \cite{MAYR1982}.\\ 
    Group monoids naturally arise from the study of Leavitt path algebras(introduced in \cite{leavitt1962module}), but can be also very easily be defined via an explicit presentation. It is a popular subject of study \cite{ara2007},\cite{LI2003105} that often look at extensions of the concept \cite{cordeiro2021talented}.\\
    In a follow-up paper I will use the tools that have been introduced in this to explore the Almost Automorphism and the Higman-Thompson groups of these trees and their action on the boundary.\\
    The main goals of this paper is classifying the class of label-regular directed  trees up to isomorphism and up to almost isomorphisms. In order to do this we note that any label-regular directed tree is an unfolding tree of some rooted graph.
    Now to classify the label-regular directed trees up to isomorphic we will look at non-edge collapsing equivalence relations (i.e. relations that induce bijections on the neighbourhoods of equivalent vertices) on graphs and note that when we factor over it we do not change the unfolding structure.
    \begin{theorem}
        Two rooted graphs $G,H$  have isomorphic unfolding trees if and only if there are two non-edge-collapsing equivalence relations $\sim_G,\sim_H$ on $G,H$ resp. s.t. $G/{\sim_G}$ and $H/{\sim_H}$ are rooted-isomorphic.\\
        So each label-regular directed tree is isomorphic to an unfolding tree of a unique (up to isomorphism) graph that has no non-edge-collapsing equivalence relations (non-redundant graphs).
    \end{theorem}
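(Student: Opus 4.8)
The plan is to produce, for every rooted graph, a canonical \emph{non-redundant} model with the same unfolding tree, and then to show that non-redundant graphs are determined up to rooted isomorphism by their unfolding trees. Throughout write $T_{G}$ for the unfolding tree of a rooted graph $(G,v_{0})$ and $T_{G,u}$ for the unfolding tree with root moved to a vertex $u$; by the standing convention that every vertex is reachable from the root, deleting unreachable vertices changes nothing. There is a canonical projection $\pi_{G}\colon T_{G}\to G$ sending a path to its endpoint, and for each vertex $p$ of $T_{G}$ the subtree rooted at $p$ is canonically isomorphic to $T_{G,\pi_{G}(p)}$ (erase the common prefix $p$). The one fact I would lean on—call it the \emph{folding lemma}—is that if $\sim$ is non-edge-collapsing on $G$ then the quotient $q\colon G\to G/{\sim}$ induces a rooted isomorphism $T_{G}\cong T_{G/\sim}$, and more generally $T_{G,u}\cong T_{G/\sim,[u]}$ for every $u$. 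I would prove it by a level-by-level induction: by definition of non-edge-collapsing, $q$ restricts to a bijection between the out-neighbourhood of each vertex $u$ and that of its class $[u]$, so starting from the matched roots one lifts these bijections inductively to matched vertices of the two trees and takes the union over all depths. Granting this, the backward direction is immediate: $G/{\sim_{G}}\cong H/{\sim_{H}}$ forces $T_{G}\cong T_{G/\sim_{G}}\cong T_{H/\sim_{H}}\cong T_{H}$, using also that a rooted isomorphism of graphs induces a rooted isomorphism of their unfolding trees.

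Next I would build the canonical model. Declare $u\sim_{G}u'$ iff $T_{G,u}\cong T_{G,u'}$ as rooted trees; this is plainly an equivalence relation. It is non-edge-collapsing: an isomorphism $T_{G,u}\to T_{G,u'}$ carries root to root and hence restricts to a bijection of the out-neighbourhoods of $u$ and $u'$ under which matched targets $w,w'$ satisfy $T_{G,w}\cong T_{G,w'}$, i.e. $w\sim_{G}w'$—exactly the data making $q\colon G\to G/{\sim_{G}}$ bijective on out-neighbourhoods. Moreover $\sim_{G}$ is the \emph{largest} non-edge-collapsing equivalence relation on $G$: if $\approx$ is another one and $a\approx b$, then the folding lemma gives $T_{G,a}\cong T_{G/\approx,[a]}=T_{G/\approx,[b]}\cong T_{G,b}$, so $a\sim_{G}b$. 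Hence $G$ is non-redundant iff $\sim_{G}$ is trivial, and $G/{\sim_{G}}$ is \emph{always} non-redundant: a non-edge-collapsing relation on $G/{\sim_{G}}$ identifying two distinct classes $[u],[u']$ would, via the folding lemma applied on $G/{\sim_{G}}$ and on $G$, give $T_{G,u}\cong T_{G/\sim_{G},[u]}\cong T_{G/\sim_{G},[u']}\cong T_{G,u'}$, hence $u\sim_{G}u'$ and $[u]=[u']$, a contradiction. Together with $T_{G}\cong T_{G/\sim_{G}}$ this establishes the existence half of the final assertion.

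It remains to prove: \emph{two non-redundant rooted graphs with isomorphic unfolding trees are rooted-isomorphic}; applied to $G/{\sim_{G}}$ and $H/{\sim_{H}}$ this yields the forward direction, and it also gives uniqueness in the final assertion. So let $G,H$ be non-redundant with a rooted-tree isomorphism $\Phi\colon T_{G}\to T_{H}$, and define $\psi\colon G\to H$ by $\psi(u)=\pi_{H}(\Phi(p))$ for any vertex $p$ of $T_{G}$ with $\pi_{G}(p)=u$. This is well defined: if $\pi_{G}(p)=\pi_{G}(p')=u$, the subtrees of $T_{G}$ at $p$ and $p'$ are both $\cong T_{G,u}$, so their $\Phi$-images—the subtrees of $T_{H}$ at $\Phi(p)$ and $\Phi(p')$—are isomorphic, whence $T_{H,\pi_{H}(\Phi(p))}\cong T_{H,\pi_{H}(\Phi(p'))}$, i.e. $\pi_{H}(\Phi(p))\sim_{H}\pi_{H}(\Phi(p'))$; as $H$ is non-redundant, $\sim_{H}$ is trivial and the two values agree. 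One checks directly that $\psi$ fixes the root (take $p=v_{0}$), sends an edge $u\to w$ to an edge $\psi(u)\to\psi(w)$ (extend a path over $u$ by that edge and push it through $\Phi$), and is a bijection of out-neighbourhoods at every vertex (the out-neighbourhood of $T_{G}$ at $p$ matches that of $u$ in $G$, $\Phi$ matches it with the out-neighbourhood of $T_{H}$ at $\Phi(p)$, which matches that of $\psi(u)$ in $H$). Running the same construction on $\Phi^{-1}$ gives $\psi'\colon H\to G$, and choosing $\Phi(p)$ as the representative path for $\psi'(\psi(u))$ shows $\psi'\circ\psi=\mathrm{id}$ on vertices, and symmetrically $\psi\circ\psi'=\mathrm{id}$; being also a vertex bijection, $\psi$ is then a bijection on all edges, so $\psi$ is a rooted-graph isomorphism. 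For uniqueness in the final assertion: if $G_{1},G_{2}$ are both non-redundant with $T_{G_{1}}\cong T_{G_{2}}$, this last statement gives $G_{1}\cong G_{2}$.

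The genuinely load-bearing routine step is the folding lemma, where the care needed is to assemble the local out-neighbourhood bijections into one coherent isomorphism of infinite trees without making inconsistent choices; I expect that—together with pinning down exactly how one forms the quotient of a (multi)graph by a non-edge-collapsing relation—to be the main technical obstacle. The single conceptual point is the well-definedness of $\psi$, which is where non-redundancy of the target graph enters: it is precisely the statement that a non-redundant graph can be read off from its unfolding tree.
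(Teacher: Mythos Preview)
Your argument is correct and takes a genuinely different route from the paper. The paper's proof of the forward direction passes through the unfolding tree as a common cover: it shows (the claim inside the proof of \thmref{thm1}) that every rooted graph is a quotient of its own unfolding tree by a non-edge-collapsing relation, so both $G$ and $H$ are quotients of one tree $\mathcal{T}\cong T_G\cong T_H$; it then invokes the lattice-theoretic package of \lemref{lem0} (existence of a relation with prescribed vertex part, uniqueness of the quotient given the vertex part, composability of successive quotients, and existence of joins) to conclude that the $\prec$-maximal quotients of $\mathcal{T}$ through $G$ and through $H$ must coincide. You bypass this machinery entirely: you build the canonical non-redundant quotient directly via the ``same unfolding subtree'' relation and, for non-redundant targets, read the isomorphism $\psi$ straight off the tree isomorphism $\Phi$. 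Your route is more constructive and makes transparent that non-redundancy is precisely the condition under which $\pi_H\circ\Phi$ factors through $\pi_G$; the paper's route hides the bookkeeping inside \lemref{lem0}, which it reuses later.

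One point to tighten. Your well-definedness argument for $\psi$ on \emph{vertices} is clean---non-redundancy of $H$ does exactly the work you say---but the \emph{edge} map is not independent of the chosen lift $p$: if $p,p'$ are two lifts of $u$ and $e\in E_o(u)$, then $\Phi(pe)=\Phi(p)e'$ and $\Phi(p'e)=\Phi(p')e''$ can yield distinct parallel edges $e'\neq e''$ in $E_o(\psi(u))$ with the same terminus $\psi(t(e))$, since nothing forces $\Phi$ to treat parallel edges consistently at different depths. The fix is the one you already gesture at when defining $\psi'$: fix once and for all a representative lift $p_u$ for each vertex $u$ and use only those to define $\psi$ on $E_o(u)$; then each $E_o(u)\to E_o(\psi(u))$ is an unambiguous bijection respecting $t$, and together with the vertex bijection this gives a graph isomorphism. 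The same care is needed when you equip your vertex relation $\sim_G$ with an edge component: the tree isomorphisms $T_{G,u}\to T_{G,u'}$ must be chosen coherently (one per class, composed through a representative) or transitivity of the induced edge relation can fail. The paper deals with the analogous issue in the proof of the first item of \lemref{lem0}.
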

    In order to reduce a graph to the non-redundant graph that produces the same unfolding tree we can apply the first phase of the \textbf{nauty} algorithm as described in \cite{codish2016} and factor over the obtained partition.\\
    This allows us to look only at unfolding trees of non-redundant graphs. When classifying these trees up to almost isomorphism the key observation is to note that two unfolding trees of one graph rooted at different vertices are almost isomorphic to each other if and only if their roots are equal to each other in the graph monoid. This parallel between graph monoids and the almost-structure of unfolding trees is the key insight needed to classify label regular directed trees up to almost isomorphisms.
    In order to take advantage of this insight we note that when working with robustly rooted graphs if the unfolding trees are almost isomorphic the underling graphs are isomorphic (but not necessary rooted-isomorphic).\\
    These two results allow us to classify unfolding trees of robustly rooted graphs. We can expand it to all graphs by seeing that:
    \begin{theorem}
        For any rooted graph $G$ there exists $G_1,\dots,G_n$ robust graphs with $k_1,\dots, k_n\in \mathds{Z}_{>0}$ and functions $\rho_1,\dots,\rho_n$ s.t. the spider product:
        \[\llangle(G_1,k_1,\rho),\dots,(G_n,k_n,\rho_n)\rrangle\] 
        has almost isomorphic tree with $G$
    \end{theorem}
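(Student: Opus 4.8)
The plan is to read the decomposition off the strongly connected component (SCC) structure of $G$ and then check, by a truncation argument, that the resulting spider product of the ``recurrent'' pieces has the same unfolding tree up to almost isomorphism.

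First I would discard everything invisible to the tree: vertices not reachable from the root contribute nothing to the unfolding tree, so we may assume every vertex of $G$ is reachable from the root $v$ (and, if convenient, pass to the non-redundant quotient via the first theorem above). In this situation the SCC of $v$ is the unique source of the condensation DAG, every sink SCC is non-trivial since out-degrees are positive, and the only graphs that are already robust are the strongly connected ones. So the natural building blocks $G_1,\dots,G_n$ are the non-trivial SCCs of $G$, each rooted at one of its vertices; each such $G_i$ is strongly connected, hence robustly rooted, and — since all unfolding trees of a strongly connected graph are mutually almost isomorphic — the choice of root inside $G_i$ affects only bookkeeping, not the almost-isomorphism type.

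Next I would encode how these cores sit inside $G$ in the data $(k_i,\rho_i)$ and build the spider product. The multiplicity $k_i$ should record with what asymptotic multiplicity the core $C_i$ is ``seen'' from the root — concretely, the number of ways a long path from $v$ first enters $C_i$ at the vertex singled out by $\rho_i$ — and the claim is that $\llangle(G_1,k_1,\rho_1),\dots,(G_n,k_n,\rho_n)\rrangle$ re-glues exactly this skeleton while forgetting the finite acyclic scaffolding between the cores. I expect this to be cleanest by induction on the number of SCCs: if $G$ is strongly connected we are done with $n=1$, $k_1=1$, $\rho_1=\mathrm{id}$; otherwise peel off a sink SCC $C$, apply the inductive hypothesis to $G$ with $C$ deleted, and use the flattening property of the spider product to fold $C$ back in as one more factor.

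The technical heart — and the step I expect to be the main obstacle — is the verification that the unfolding tree is preserved up to almost isomorphism. Truncating the unfolding tree $T(G,v)$ at depth $N$ leaves a forest whose components are the trees $T(G,w)$ for $w$ running, with multiplicity, over the endpoints of length-$N$ paths from $v$; as $N\to\infty$ the finitely many trivial SCCs and the finite DAG-height above the cores get exhausted, so for large $N$ every component is almost isomorphic to $T(C_i,\cdot)$ for some core, with multiplicities dictated by the $(k_i,\rho_i)$ data, and the same analysis applied to the spider product yields the same multiset; matching truncations then gives isomorphic complementary forests. The delicate points are (i) non-sink recurrent SCCs, where a path may linger in $C_i$ and then escape to a lower core, so one must show that paths that eventually leave $C_i$ are already counted elsewhere and that stacking cores along a chain of the DAG does not change the almost-isomorphism class — i.e.\ almost isomorphism is insensitive to how the recurrent pieces are nested, only to which ones occur and with what limiting multiplicity; and (ii) the degenerate case in which the source SCC of $v$ is a single vertex without a self-loop, which forces the root to live on one of the spider's legs and has to be accommodated separately in the definition of the product.
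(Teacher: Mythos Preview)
Your decomposition is the wrong one: strongly connected components cannot serve as the spider factors. The claim you flag as ``delicate point (i)'' --- that almost isomorphism is insensitive to how the recurrent pieces are nested --- is false, and once it fails the whole plan collapses. (Your side remark that all unfolding trees of a strongly connected graph are mutually almost isomorphic is also false; the two-vertex examples in Section~5 exhibit $\mathcal{T}(G,x)\not\cong_A\mathcal{T}(G,y)$ for many edge multiplicities.)

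For a concrete failure of (i), take $G$ on $\{x,y\}$ with two self-loops at $x$, two self-loops at $y$, and one edge $x\to y$. The SCCs are $C_1=\{x\}$ and $C_2=\{y\}$, each a single vertex with two loops, so any spider product built from them has unfolding tree consisting of a root with finitely many copies of the binary tree $\mathcal{T}_2$ attached. But $\mathcal{T}(G,x)$ is not almost isomorphic to any such tree: $G$ is non-redundant and robustly rooted with unique root $x$, so (by the claim in the proof of \thmref{thm2}) every basis of a cofinite inescapable subspace of $\mathcal{T}(G,x)$ contains some $b$ with $T(b)=x$, whose half-tree is $\mathcal{T}(G,x)$ itself; this tree has vertices of out-degree $3$ at every depth and is not isomorphic to $\mathcal{T}_2$, so no matching basis exists on the spider-product side.

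What the paper does instead is take as factors not the SCCs but the \emph{half-graphs} $H_v$ for $v$ lying on a cycle (or $v$ a sink). Such an $H_v$ is robustly rooted because $v$ is a root with non-empty incoming neighbourhood, and crucially $H_v$ carries all the downstream structure with it rather than stripping it away. One observes that the set of walks from the root that have already visited such a vertex is a cofinite inescapable subspace (inescapable because every sufficiently long walk in a finite graph repeats a vertex; cofinite because first visits occur within $|VH|$ steps). Its basis $B$ lists the walks that reach a cyclic-or-sink vertex for the first time, and the spider product has one component $(H_{T(b)},T(b),1)$ for each $b\in B$. The almost isomorphism is then immediate from \lemref{lem4}: both sides have a cofinite inescapable subspace that decomposes as $\bigsqcup_{b\in B}\mathcal{T}(H_{T(b)},T(b))$. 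No induction on SCCs and no asymptotic truncation argument is needed.
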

    Here a spider product connects various graphs to one root as shown in figure \ref{fig1}.
    \begin{figure}
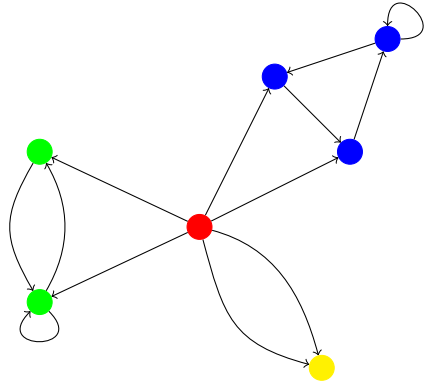
\label{fig1}
    \ctikzfig{spider_product_1}
    \caption{An example of a spider product of 3 graphs (with green, yellow and blue vertices) and a red root}
    \end{figure}
    So we only have to look at spider products of robust graphs. We note first that if they have almost isomorphic unfolding trees we may assume that they have the same component graphs, thanks to the fact that if unfolding trees of robustly rooted graphs are almost isomorphic, the graphs are isomorphic. Additionally we can see that:
    \begin{theorem}
        For any two non redundant spider products of  robust graphs:
        \begin{align*}
            (G_{\rho},S_{\rho}):&=\llangle (G_1,\rho_1.k_1),\dots,(G_n,\rho_n.k_n) \rrangle\\
            (G_{\sigma},S_{\sigma}):&=\llangle (G_1,\sigma_1.l_1),\dots,(G_n,\sigma_n.l_n) \rrangle\\
        \end{align*}
        then $\mathcal{T}(G_{\rho},S_{\rho})$ and $\mathcal{T}(G_{\sigma},S_{\sigma})$ are almost isomorphic if and only if for each $m\in\{1,\dots,n\}$ we have:
        \[\sum_{i=1}^{k_m}\rho_m(i)=\sum_{j=1}^{l_m}\sigma_m(j)\]
        in the graph monoid of $G_m$.
    \end{theorem}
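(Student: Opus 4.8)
The plan is to realise both unfolding trees as unfoldings of one and the same auxiliary graph, so that the characterisation of almost-isomorphism by equality in the graph monoid (the key lemma, for a single graph rooted at two different vertices) can be applied verbatim. Write $\cong_{\mathrm{alm}}$ for almost-isomorphism and $M(\cdot)$ for the graph monoid. Let $G^{*}$ be the directed graph built from one copy of each $G_{1},\dots,G_{n}$ by adjoining two new vertices $S_{\rho}$ and $S_{\sigma}$, where $S_{\rho}$ has, for each $m$ and each $i\in\{1,\dots,k_{m}\}$, an outgoing edge to the vertex $\rho_{m}(i)$ of $G_{m}$ (with multiplicity, if $\rho_{m}$ is not injective), and $S_{\sigma}$ has, for each $m$ and each $j\in\{1,\dots,l_{m}\}$, an outgoing edge to $\sigma_{m}(j)$; if one prefers $G^{*}$ to satisfy the standing hypotheses of the key lemma one may first restrict to the subgraph spanned by the vertices reachable from $S_{\rho}$ or from $S_{\sigma}$, which changes neither the unfoldings nor the monoid elements considered below. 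The first step is the observation that neither new vertex receives an edge, so unfolding $G^{*}$ from $S_{\rho}$ never reaches $S_{\sigma}$ and stays inside the copies reached through $S_{\rho}$; hence $\mathcal{T}(G^{*},S_{\rho})\cong\mathcal{T}(G_{\rho},S_{\rho})$ (the hanging subtrees are exactly the trees $\mathcal{T}(G_{m},\rho_{m}(i))$, independently of whether the spider product is read as ``$k_{m}$ edges into one copy'' or ``one edge into each of $k_{m}$ copies''), and symmetrically $\mathcal{T}(G^{*},S_{\sigma})\cong\mathcal{T}(G_{\sigma},S_{\sigma})$. Note also that $G^{*}$ is finite and has no sinks, so the key lemma applies to it.

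The second step is to compute $M(G^{*})$. Since $S_{\rho}$ and $S_{\sigma}$ have no incoming edges, each of them occurs in exactly one defining relation of the monoid, namely $S_{\rho}=\sum_{m=1}^{n}\sum_{i=1}^{k_{m}}\rho_{m}(i)$ and $S_{\sigma}=\sum_{m=1}^{n}\sum_{j=1}^{l_{m}}\sigma_{m}(j)$; a Tietze transformation eliminates both generators and leaves the presentation which is the juxtaposition of the presentations of the $M(G_{m})$, with no relation linking distinct blocks. Hence $M(G^{*})\cong\bigoplus_{m=1}^{n}M(G_{m})$ (a finite coproduct of commutative monoids being their direct sum), and under this isomorphism $[S_{\rho}]$ is the tuple whose $m$-th coordinate is $\sum_{i=1}^{k_{m}}[\rho_{m}(i)]\in M(G_{m})$, and $[S_{\sigma}]$ the tuple whose $m$-th coordinate is $\sum_{j=1}^{l_{m}}[\sigma_{m}(j)]$. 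Consequently $[S_{\rho}]=[S_{\sigma}]$ in $M(G^{*})$ if and only if $\sum_{i=1}^{k_{m}}\rho_{m}(i)=\sum_{j=1}^{l_{m}}\sigma_{m}(j)$ in $M(G_{m})$ for every $m\in\{1,\dots,n\}$.

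Combining the two steps finishes the proof: by the key lemma applied to $G^{*}$ at the vertices $S_{\rho},S_{\sigma}$ we have $\mathcal{T}(G^{*},S_{\rho})\cong_{\mathrm{alm}}\mathcal{T}(G^{*},S_{\sigma})$ iff $[S_{\rho}]=[S_{\sigma}]$ in $M(G^{*})$, and both equivalences obtained above let this be rewritten as the asserted statement. The reduction to a common list of component graphs $G_{1},\dots,G_{n}$ is the only place robustness enters, through the earlier result that almost-isomorphic unfoldings of robustly rooted graphs have isomorphic underlying graphs; non-redundancy plays no further role, since collapsing a non-edge-collapsing equivalence changes neither the unfolding tree (by the first theorem) nor the graph monoid.

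The genuine content is entirely inside the key lemma that is being quoted (equality in the graph monoid is the confluence/cofinality invariant of the rewriting that produces the successive levels of an unfolding tree); within the present argument the only points requiring care are the bookkeeping in the monoid presentation — in particular that the spider root's having no incoming edges is exactly what makes it eliminable and makes the blocks decouple — and the discipline of organising everything around the single graph $G^{*}$: working instead with the disjoint union $G_{\rho}\sqcup G_{\sigma}$ would make the two families of attached copies generate disjoint blocks of the monoid and collapse the monoid equality to a triviality, so the sharing of the $G_{m}$ between the two roots must be built in from the outset.
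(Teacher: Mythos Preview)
Your route is genuinely different from the paper's. The paper first establishes the case $n=1$ directly from Lemmas~\ref{lem14} and~\ref{lem15}, and then proves a decomposition claim (Claim~\ref{clm3}) saying that the two spider-product trees are almost isomorphic iff each pair $\mathcal{T}(\llangle(G_m,\rho_m,k_m)\rrangle)$, $\mathcal{T}(\llangle(G_m,\sigma_m,l_m)\rrangle)$ is. You instead build one auxiliary graph $G^{*}$ carrying both spider roots over a \emph{shared} copy of $\bigsqcup_m G_m$, invoke Theorem~\ref{thm3} once, and let the direct-sum decomposition $M(G^{*})\cong\bigoplus_m M(G_m)$ separate the components. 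The monoid computation (Tietze elimination of the two sources) is correct, and the organisational idea is cleaner than the paper's two-stage argument.

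There is, however, a real gap in the black-box appeal to Theorem~\ref{thm3}: its hypotheses ask that $G^{*}$ be non-redundant and that $S_\rho,S_\sigma$ be roots, and neither holds. Since $S_\sigma$ has no incoming edge, there is no walk from $S_\rho$ to $S_\sigma$, so $S_\rho$ is \emph{not} a root of $G^{*}$; your parenthetical fix (restrict to vertices reachable from $S_\rho$ or $S_\sigma$) does not repair this, because $S_\sigma$ remains unreachable from $S_\rho$. Moreover $G^{*}$ can be redundant: whenever $(G^{*})_{S_\rho}\cong_R(G^{*})_{S_\sigma}$ we have $S_\rho\sim_o S_\sigma$. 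What saves the argument is that the \emph{proof} of Theorem~\ref{thm3} (via Lemmas~\ref{lem12} and~\ref{lem13}) does not actually use rootness, and uses non-redundancy only to deduce $T(b)=T(\pi(b))$ for basis elements; since any basis of a proper cofinite inescapable subspace here has all termini in $\bigsqcup_m G_m$, and \emph{that} graph is non-redundant (any non-edge-collapsing relation on it extends trivially to one on $G_\rho$ fixing $S_\rho$, contradicting spider non-redundancy), this step still goes through. You should make this explicit rather than asserting the key lemma applies; and your closing remark that ``non-redundancy plays no further role'' is misleading --- non-redundancy of $\bigsqcup_m G_m$ is exactly what powers the ``only if'' direction.
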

    This allows us to classify all unfolding trees up to almost isomorphism by looking at each spider product of connected robust graphs and checking when the sums in the above theorem are equal.  
    \section{Basic Definitions}
    In this document we will be exploring the structure of label regular directed (l.r.d.) trees that are an expansion of the concept of label regular trees that can be described by directed graphs with multiple edges. This description will allow us to better characterise its almost automorphism group.\\
    In this paper we will view define graphs in a very broad way by saying that a graph $G$ is a $4$-tuple $(V,E,o,t)$ where $V$ and $E$ are sets (of vertices and edges resp.) and $o,t$ are functions from $E$ to $V$, denoting the origin and terminus of an edge respectively. We will call $G$ finite if $V$ and $E$ are finite.\\
    A subgraph $H=(VH,EH,o_H,t_H)$ of $G=(VG,EG,o_G,t_G)$ is a graphs s.t.:
    \begin{align*}
        VH&\subseteq VG\\
        EH&\subseteq EG\\
        o_H&=o_G|_{EH}\\
        t_H&=t_G|_{EH}
    \end{align*}
    The union of two subgraphs $H_1,H_2$ will be the subgraph with vertex set $VH_1\cup VH_2$ and edge set $EH_1\cup EH_2$, we will denote it by $H_1\cup H_2$. The intersection $H_1\cap H_2$ is defined analogously. 
    We will call a subraph $H$ vertex induced if we have:
    \[EH=\{e\in EG\mid o_G(e),t_G(e)\in VH \}\]
    A graph homomorphism $\phi$ between two graphs $G=(V,E,o,t)$ and\\
    $G'=(V',E',o',t')$ consists of two functions $\phi_V:V\to V'$ and $\phi_E:E\to E'$ s.t.:
    \[\forall e\in E\ \begin{cases}
        o'(\phi_E(e))=\phi_V(o(e))\\
        t'(\phi_E(e))=\phi_V(t(e))
    \end{cases}\ \]
    If $\phi_V$ and $\phi_E$ are both injective or surjective , we call $\phi$ injective or surjective, resp., if $\phi_V$ and $\phi_E$ are bijections we call $\phi$ an isomorphism. For two isomorphic graphs $G$ and $H$ we write $G\cong H$.
    For any graph we will define the outgoing and incoming edge-neighbourhood of some vertex $v\in V$ as follows:
    \begin{align*}
        E_o(v):&=\{e\in E\mid o(e)=v\}\\
        E_t(v):&=\{e\in E\mid t(e)=v\} 
    \end{align*}
    The outgoing-degree and incoming-degree of a vertex respectively as follows:
    \begin{align*}
        d_o(v):&=|E_o(v)|\\
        d_t(v):&=|E_t(v)|
    \end{align*}
    From this we also can define the incoming and outgoing neighbourhood:
    \begin{align*}
        N_o(v):&=t[E_o(v)]\\
        N_t(v):&=o[E_t(v)]
    \end{align*}
    If $E_o(v)=\emptyset$ we will call $v$ a sink and if $E_t(v)=\emptyset$, a source.\\
    Now we will define the space of walks on $G$ to be:
    \[ \mathcal{W}(G):=\{e_1\dots e_n\in E^*\mid \forall 1\leq i< n,\ t(e_i)=o(e_{i+1}) \}\cup \{\epsilon_v\mid v\in VG\} \]
    Where $\epsilon_v$ denotes the empty walk on $v$. For any non-empty walk $p=e_1\dots e_n$, define $O(p):=o(e_1)$ and $T(p):=t(e_n)$, additionally the length of $p$ will be $|p|:=n$. For empty walks $\epsilon_v$ we define $O(\epsilon_v):=v=:T(\epsilon_v)$ and $|\epsilon_v|:=0$.\\
    A graph $G$ is \textbf{disconnected} if there are two non-empty subgraphs $H_1,H_2$ s.t. $H_1\cap H_2=\emptyset$ and $H_1\cup H_2=G$. If this is not the case we call the graph $G$ \textbf{connected}.\\  
    A graph will be \textbf{star-connected} if and only if for any two vertices $v_1,v_2\in V$ we have some walks $p_1,p_2\in \mathcal{W}(G)$, with $T(p_1)=v_1$, $T(p_2)=v_2$ and $O(p_1)=O(p_2)$. We will say that such a graph has a \textbf{root} $S\in V$ if we can always choose $p_1,p_2$ s.t.~$O(p_1)=O(p_2)=S$. A rooted graph is a pair $(G,S)$ with $G$ being a graph and $S$ is its root. Any two rooted graphs $(G,S)$ and $(H,R)$ are rooted isomorphic if there is a graph isomorphism $\phi: G\to H$ with $\phi(S)=R$, we will write $(G,S)\cong_R (H,R)$.
    For any vertex $x\in VG$ we define the outgoing graph from $x$: $G_x:=(V_x,E_x,o_x,t_x)$ where:
    \begin{align*}
        V_x:&=\{x\}\cup \{y\in V\mid \exists p\in \mathcal{W}(G),\ O(p)=y\land T(p)=x\}\\
        E_x:&=\{e\in E\mid o(e),t(e)\in V_x\}\\
        o_x:&= o|_{E_x}\\
        t_x:&= t|_{E_x}
    \end{align*}
    note that for any such graph $x$ is a natural root. We will call two vertices $x,y\in V$ o-equivalent (we write $x\sim_o y$) if and only if:
        \[(G_x,x)\cong_R(G_y,y)\]
    We will call a graph \textbf{acyclic} if and only if for any non-empty walk $p\in\mathcal{W}(G)$ we have $O(p)\neq T(p)$.\\
    The graph $G$ is said to be \textbf{without clashes} if for any $e_1,e_2\in E$:
    \[t(e_1)=t(e_2)\implies e_1=e_2\]\\
    So we will define a directed \textbf{tree} to be a star connected acyclic graph without clashes. If the tree has a root we will call it rooted (note such a root is always unique and thus any  isomorphism between trees will be rooted).\\
    If $VG$ and $EG$ are both finite sets we will call the graph $G$ finite. 
    Now that we have defined graphs and trees we can define label-regular directed trees in 3 different ways:
    \begin{defn}\label{def1}
        A rooted tree $\mathcal{T}=(V\mathcal{T},E\mathcal{T},o_{\mathcal{T}},t_{\mathcal{T}})$ is label-regular if we have some finite set $M$ and some function $l:V\mathcal{T}\to M$ s.t.~for any $m\in M$:
        \[ \forall v,w\in V\mathcal{T}\ (l(v)=l(w))\implies |\{ u\in N_o(v)\mid l(u)=m  \}|=|\{ u\in N_o(w)\mid l(u)=m  \}| \]
    \end{defn}
    Note that this definition differs from the definition of an undirected label-regular tree, which is a less general notion.
    \begin{defn}\label{def2}
        For a rooted tree $\mathcal{T}$ we say that it is label-regular if and only if:
        \[|\{[x]_{\sim_o}\mid x\in V\mathcal{T}\}|<\infty\]
    \end{defn}
    For the final definition we define for any rooted graph $(G,S)$ the space of walks starting at $S$: $\mathcal{W}(G,S):=\{w\in \mathcal{W}(G)\mid O(w)=S\}$. We can endow this set with a graph structure by defining the graph $\mathcal{T}(G,S)$ as follows:
    \begin{align*}
        V\mathcal{T}(G,S):&=\mathcal{W}(G,S)\\
        E\mathcal{T}(G,S):&=\{ (w,we)\in \mathcal{W}(G,S)\mid w,we\in\mathcal{W}(G,S), e\in EG \}\\
        \forall (w,we)\in E\mathcal{T}(G,S),\ &o_{\mathcal{T}}( (w,we) ):=w\text{ and } t_{\mathcal{T}}( (w,we) ):=we
    \end{align*}
    This clearly is a rooted tree with root $\varepsilon_S$.
    \begin{defn}\label{def3}
        A rooted tree $(\mathcal{T},\epsilon)$ is label-regular if there exists some finite rooted graph $(G,S)$ s.t.
            \[(\mathcal{T},\epsilon)\cong_R (\mathcal{T}(G,S),\epsilon_S)\]
    \end{defn}
    \begin{theorem}\label{thm0}
    The three above definitions are equivalent
    \end{theorem}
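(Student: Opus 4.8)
The plan is to establish the three-way equivalence by proving a cycle of implications, say $\Defref{def3} \Rightarrow \Defref{def1} \Rightarrow \Defref{def2} \Rightarrow \Defref{def3}$, since each single implication is more tractable than a direct two-way argument. The unifying idea throughout is that the o-equivalence class $[w]_{\sim_o}$ of a walk $w \in \mathcal{W}(G,S)$ in the tree $\mathcal{T}(G,S)$ is controlled entirely by the terminal vertex $T(w) \in VG$: I would first prove the auxiliary fact that if $T(w) = T(w')$ then the outgoing subtrees $(\mathcal{T}(G,S)_w, w)$ and $(\mathcal{T}(G,S)_{w'}, w')$ are rooted-isomorphic, via the map sending $wp \mapsto w'p$ for any walk-suffix $p$ rooted at $T(w)=T(w')$. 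This immediately gives that $\mathcal{T}(G,S)$ has at most $|VG| < \infty$ many o-equivalence classes.

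For $\Defref{def3} \Rightarrow \Defref{def1}$: given $(\mathcal{T},\epsilon) \cong_R (\mathcal{T}(G,S),\epsilon_S)$, I would define the labelling $l : V\mathcal{T}(G,S) \to VG$ by $l(w) := T(w)$, with $M := VG$ finite. One then checks the label-regularity condition directly: the outgoing neighbours of $w$ in the tree are exactly $\{we \mid e \in E_o(T(w))\}$, so for $v,w$ with $T(v)=T(w)$, the outgoing edges are in canonical bijection and hence for each $m \in VG$ the count $|\{u \in N_o(w) \mid l(u)=m\}|$ equals $|\{e \in E_o(T(w)) \mid t(e)=m\}|$, which depends only on $T(w)=l(w)$. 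Pulling this back along the rooted isomorphism gives the labelling on $\mathcal{T}$.

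For $\Defref{def1} \Rightarrow \Defref{def2}$: suppose $l : V\mathcal{T} \to M$ witnesses label-regularity in the sense of $\Defref{def1}$. I would show by induction on depth that $l(v) = l(w)$ implies $(\mathcal{T}_v, v) \cong_R (\mathcal{T}_w, w)$ — the label condition says the multiset of labels of children of $v$ matches that of $w$, so one can match up children with equal labels and recurse, assembling a rooted isomorphism level by level (using that in a tree the outgoing subtree from a vertex is determined by the subtrees hanging off its children). Hence $v \sim_o w$ whenever $l(v) = l(w)$, so the number of o-equivalence classes is at most $|M| < \infty$. The main subtlety here is making the inductive assembly of the isomorphism clean — one needs a consistent choice of child-matchings and to verify these glue into a single well-defined graph isomorphism; in a tree without clashes this is essentially forced, but it is the step requiring the most care.

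For $\Defref{def2} \Rightarrow \Defref{def3}$: let $\mathcal{T}$ have finitely many o-classes $C_1,\dots,C_n$. I would build a finite rooted graph $(G,S)$ whose vertices are these classes, with root the class of $\epsilon$; for edges, pick a representative $v_i \in C_i$ and for each outgoing edge of $v_i$ in $\mathcal{T}$ to a child $u$, put an edge in $G$ from $C_i$ to the class of $u$ (this is well-defined on the target, and independent of the representative $v_i$ up to the rooted isomorphism witnessing o-equivalence). Then one shows $\mathcal{T} \cong_R \mathcal{T}(G,S)$ by the map sending a vertex $x \in V\mathcal{T}$ at depth $d$, reached by the unique walk from $\epsilon$, to the corresponding walk in $G$ obtained by recording the sequence of o-classes; injectivity and surjectivity follow from the tree structure (unique walks from the root) together with the fact that equal o-classes have isomorphic outgoing subtrees, so the choice of representative does not lose information. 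This finishes the cycle and hence the theorem.
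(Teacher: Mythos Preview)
Your proposal is correct and covers the same ground as the paper, but you run the cycle in the opposite direction: the paper proves $\Defref{def2} \Rightarrow \Defref{def1} \Rightarrow \Defref{def3} \Rightarrow \Defref{def2}$. The practical difference is that the paper's $\Defref{def2} \Rightarrow \Defref{def1}$ is immediate (take $M$ to be the set of o-classes and $l(x) := [x]_{\sim_o}$; the label condition follows because any rooted isomorphism $\mathcal{T}_v \to \mathcal{T}_w$ restricts to a bijection on children preserving o-classes), whereas your $\Defref{def1} \Rightarrow \Defref{def2}$ requires assembling an infinite rooted isomorphism level by level from the label-matching hypothesis --- doable, as you note, but more work. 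Conversely, your $\Defref{def3} \Rightarrow \Defref{def1}$ via $l(w) := T(w)$ is marginally cleaner than the paper's $\Defref{def3} \Rightarrow \Defref{def2}$, though both rest on the same key observation that $T(w) = T(w')$ forces $\mathcal{T}(G,S)_w \cong_R \mathcal{T}(G,S)_{w'}$ via $wp \mapsto w'p$.

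One genuine imprecision to fix in your $\Defref{def2} \Rightarrow \Defref{def3}$: the map ``record the sequence of o-classes along the unique walk from $\epsilon$ to $x$'' does not in general determine a walk in $G$, because $G$ may have several parallel edges between the same pair of classes (this happens whenever a representative $v_i$ has multiple children lying in the same o-class). You need instead to fix, for each vertex $v$ of $\mathcal{T}$, a bijection between the outgoing edges of $[v]_{\sim_o}$ in $G$ and the children of $v$ in $\mathcal{T}$ (respecting target classes), and then use these bijections to define the isomorphism inductively on depth. The paper carries out exactly this construction in its $\Defref{def1} \Rightarrow \Defref{def3}$ step (the bijections $\pi_v$ there), and the same device patches your argument.
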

    \begin{proof}
    In order to show the above theorem three definitions are equivalent we will show that \defref{def2} implies \defref{def1}, which implies \defref{def3}, which in turn implies \defref{def2}.\\
    For the first implication we will take some rooted tree $\mathcal{T}$ s.t.~ 
        \[|\{[x]_{\sim_o}\mid x\in V\mathcal{T}\}|<\infty\]
    then when we define $M:=\{[x]_{\sim_o}\mid x\in V\mathcal{T}\}$ and $l:V\mathcal{T}\to M$ by setting $l(x):=[x]_{\sim_o}$ for any vertex $x$. Now in order to show that this labelling satisfies the conditions from \defref{def1} we pick any two vertices $x,y$ with $l(x)=l(y)$ this allows us to fix some rooted isomorphism $\Psi:\mathcal{T}_x\to \mathcal{T}_y$. We note that $\Psi(N_o(x))=N_o(y)$ since $\Psi(x)=y$, giving us a bijection $\Psi|_{N_o(x)}$ between $N_o(x)$ and $N_o(y)$. Furthermore we see that $l(z)=l(\Psi(z))$, since $\Psi|_{\mathcal{T}_{z}}:\mathcal{T}_z\to \mathcal{T}_{\Psi(z)}$ is a rooted isomorphism. This shows us that:
    \[|\{ z\in N_o(x)\mid l(z)=m  \}|=|\{ z\in N_o(y)\mid l(z)=m  \}|\]
    for any $m\in M$.\\
    Now if we there is a labelling $l:V\mathcal{T}\to M$ as in definition \ref*{def1} w.l.o.g. we can assume that $l$ is surjective by shrinking $M$. We can for any $m,n\in M$ define $Y_{m,n}$ to be disjoint sets s.t.
    \[|Y_{m,n}|=|\{ u\in N_o(v)\mid l(u)=n  \}|\]
    where $v\in V\mathcal{T}$ is any vertex s.t.~$l(v)=m$. Now if we set:
    \[Y:=\bigcup_{m,n\in M}Y_{m,n}\] 
    We can define the graph $G=(M,Y,o,t)$ by setting:
    \[\forall m,n\in M,\forall e\in Y_{m,n},\ o(e)=m\text{ and }t(e)=n\]
    Now let $s$ be the root of $\mathcal{T}$ and set $S:=l(s)$. We will construct an isomorphism $\Psi:\mathcal{T}(G,S)\to \mathcal{T}$ inductively over the length of a word $w\in \mathcal{W}(G,S)$ s.t.~$l(\Psi(w))=T(w)$. For the induction beginning we define $\Psi(\varepsilon)=s$. Now if we already defined $\Psi(w)=v$ for some word $w\in \mathcal{W}(G,S)$ then set $m:=O(w)=l(v)$, we will fix a bijection:
    \[\pi_{v}:Y_{m}:=\bigcup_{n\in M}Y_{m,n}\to N_o(v)\]
    s.t.~$\forall u\in Y_{m,n},\ l(\pi_v(u))=n$. Note that we have $\{u\in EG\mid wu\in\mathcal{W}(G,S) \}=Y_m$, so we can define $\Psi(wu)=\pi_v(u)$. Now since we have $l(\Psi(wu))=l(\pi_v(u))=t(u)=T(wu)$, the construction is complete.\\
    $\Psi$ is a homomorphism by the construction and since the $\pi_v$ are bijections it is an isomorphism.\\
    Now to see that \defref{def3} implies \defref{def2} we note that for any $w_1,w_2\in \mathcal{W}(G,S)$ s.t.~$T(w_1)=T(w_2)$ we have for any $u\in \mathcal{W}(G)$:
    \[w_1u\in \mathcal{W}(G,S)\iff O(u)=T(w_1)=T(w_2)\iff w_2u\in \mathcal{W}(G,S)\] 
    so we can see that the function:
    \[\Psi_V:   \begin{cases}
                    V\mathcal{T}(G,S)_{w_1}\to V\mathcal{T}(G,S)_{w_2}\\
                    w_1u\mapsto w_2u
                \end{cases} \]
    combined with the function $\Psi_E:E\mathcal{T}(G,S)_{w_1}\to E\mathcal{T}(G,S)_{w_2}$ defined by\\
    $\Psi_E( (w,we) )=(\Psi_V(w),\Psi_V(we))$ gives us a tree isomorphism $\Psi:\mathcal{T}(G,S)_{w_1}\to \mathcal{T}(G,S)_{w_2}$. So any words with the the same terminus are o-equivalent and since there are only finitely many options for the terminus ($G$ is finite), we thus can only have finitely many equivalence classes of vertices.
    \end{proof}
    This allows us to look at label-regular trees as unfolding trees of finite rooted graphs, so instead of looking at these trees we can look at finite rooted graphs. The exploration of the how the properties of a graph connect to the properties of its unfolding tree, allows us to classify these trees up to isomorphism and almost isomorphism. 
\section{Classification of Label-Regular Directed Trees up to Isomorphism}
    Now \defref{def3} gives us a finite way of describing a (often infinite) label-regular trees via rooted graphs. Now we can figure out when two rooted graphs represent isomorphic label-regular trees.
    For this we will first define an equivalence relation on a graph.
    \begin{defn}\label{def4}
        For any graph $G=(V,E,o,t)$ an equivalence relation $\sim$ on $G$ consists of two equivalence relations $\sim_V$ and $\sim_E$ on $V$ and $E$ respectively s.t:
        \[ \forall e_1,e_2\in EG (e_1\sim_E e_2)\implies\big((o(e_1)\sim_V o(e_2))\land(t(e_1)\sim_V t(e_2))\big) \]
        Such an equivalence relation is trivial if both $\sim_V$ and $\sim_E$ are.\\
        The quotient graph $G/{\sim}:=(V/{\sim_V},E/{\sim_E},o_{\sim},t_{\sim})$ is defined by setting: 
        \begin{align*}
            V/{\sim}:&=\{[v]_{\sim_V}\mid v\in V\}\\
            E/{\sim}:&=\{[e]_{\sim_E}\mid e\in E\}\\
            \forall e\in E,\ o_{\sim}([e]_{\sim_E}):&=[o(e)]_{\sim_V}\\
            \forall e\in E,\ t_{\sim}([e]_{\sim_E}):&=[t(e)]_{\sim_V}
        \end{align*}
        We will call a graph relation $\sim$ \textbf{non edge collapsing} if:
        \[\forall v\sim v'\in VG,\forall e\in EG\ (o(e)=v)\implies (\exists! e'\in EG\ (o(e')=v')\land (e\sim e') )\]
    \end{defn}
    We note first that any surjective graph homomorphism that is injective on each outgoing neighbourhood to a non-edge-collapsing equivalence relation (and vice-versa):
    \begin{lemma}\label{lem-1}
        Take any two graphs $G,H$ and any homomorphism:
        \[\phi:G\to H\]
        s.t.~$\phi$ is surjective and for any $v\in VG$: $\phi|_{E_o(v)}$ is  a bijection from $E_o(v)$ to $E_o(\phi(v))$.
         Then there exists a non edge collapsing equivalence relation $\sim$ on $G$ and an isomorphism:
        \[\psi:G/{\sim}\to H\]
        s.t.~$\forall x\in VG\cup EG,\ \psi([x]_{\sim})=\phi(x) $
    \end{lemma}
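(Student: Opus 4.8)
The plan is to realise $\sim$ as the \emph{kernel} of $\phi$ and then run the obvious graph-theoretic analogue of the first isomorphism theorem. Concretely, I would set $v\sim_V v'$ iff $\phi(v)=\phi(v')$ and $e\sim_E e'$ iff $\phi(e)=\phi(e')$. These are clearly equivalence relations on $VG$ and $EG$; to see that the pair $\sim=(\sim_V,\sim_E)$ is a graph equivalence relation in the sense of \defref{def4}, suppose $e_1\sim_E e_2$, so $\phi(e_1)=\phi(e_2)$. Applying the terminus/origin maps of $H$ and using that $\phi$ is a homomorphism gives $\phi(o(e_1))=o_H(\phi(e_1))=o_H(\phi(e_2))=\phi(o(e_2))$, i.e.\ $o(e_1)\sim_V o(e_2)$, and the argument for $t$ is identical. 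So $\sim$ is a legitimate equivalence relation and $G/{\sim}$ is defined.

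Next I would check that $\sim$ is non-edge-collapsing. Fix $v\sim_V v'$ and $e\in EG$ with $o(e)=v$. Then $\phi(e)\in E_o(\phi(v))=E_o(\phi(v'))$, so since $\phi|_{E_o(v')}$ is a bijection onto $E_o(\phi(v'))$ there is a unique $e'\in E_o(v')$ with $\phi(e')=\phi(e)$, that is, with $o(e')=v'$ and $e\sim_E e'$. For uniqueness against \emph{all} of $EG$: any $e''$ with $o(e'')=v'$ already lies in $E_o(v')$, and $e\sim_E e''$ forces $\phi(e'')=\phi(e)=\phi(e')$, whence $e''=e'$ by injectivity of $\phi|_{E_o(v')}$. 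This is the one place where the full strength of the hypothesis (both that $\phi|_{E_o(v)}$ lands surjectively on $E_o(\phi(v))$ and that it is injective) is used, and it is the step I would state most carefully, precisely because the uniqueness clause in \defref{def4} quantifies over all edges of $G$, not just those in $E_o(v')$.

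Finally I would define $\psi:G/{\sim}\to H$ by $\psi([v]_{\sim_V}):=\phi(v)$ on vertices and $\psi([e]_{\sim_E}):=\phi(e)$ on edges; these are well defined exactly because $\sim$ is the kernel of $\phi$, and they satisfy $\psi([x]_\sim)=\phi(x)$ by construction. The homomorphism identities for $\psi$ are inherited from those of $\phi$: for instance $o_H(\psi([e]_{\sim_E}))=o_H(\phi(e))=\phi(o(e))=\psi([o(e)]_{\sim_V})=\psi(o_\sim([e]_{\sim_E}))$, and likewise for $t$. Injectivity of $\psi$ on vertices and on edges is immediate from the definitions of $\sim_V$ and $\sim_E$ (equal images means the same class), and surjectivity on both vertices and edges is inherited from surjectivity of $\phi$; hence $\psi$ is an isomorphism. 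I do not expect a genuine obstacle here — the content is essentially bookkeeping — the only subtlety being the formulation of the non-edge-collapsing check described above.
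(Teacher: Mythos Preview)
Your proposal is correct and follows essentially the same route as the paper: define $\sim$ as the kernel of $\phi$, verify it is a non-edge-collapsing graph equivalence relation using the bijectivity of $\phi|_{E_o(v)}$, and then push $\phi$ down to the quotient to obtain the isomorphism $\psi$. Your extra remark about uniqueness ``against all of $EG$'' is harmless but unnecessary, since any $e''$ with $o(e'')=v'$ already lies in $E_o(v')$ by definition.
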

    We will call any homomorphism $\phi$ as in the lemma \textbf{non edge collapsing}.
    \begin{proof}
        We will define $\sim$ as follows:
        \[\forall x,y\in  VG\cup EG,\ x\sim y \iff \phi(x)=\phi(y)\]
        this is a graph-equivalence relation since $\phi$ is a graph homomorphism. In order to show that $\sim$ is non edge collapsing we will take $v_1\sim v_2$ and some $e_1\in E_o(v_1)$. Now let $e:=\phi(e_1)$ by definition we have $o(e)=\phi(v_1)=\phi(v_2)$ and since $\phi|_{E_o(v_2)}$ is a bijection we have some unique $e_2\in E_o(v_2)$ s.t.~$\phi(e_2)=e=\phi(e_1)$ i.e. $e_2\sim e_1$. This gives us that $\sim$ is non edge collapsing.\\
        Now we can define $\psi:G/{\sim}\to H$ by setting $\forall x\in VG\cup EG,\ \psi([x]_{\sim})=\phi(x) $. $\psi$ is well defined since for any $x\sim y\in VG\cup EG$ we have $\phi(x)=\phi(y)$. $\psi$ is a graph homomorphism since:
        \[\forall e\in EG,\ o(\psi([e]_{\sim}))=o(\phi(e))=\phi(o(e))=\psi([o(e)]_{\sim})=\psi(o([e]_{\sim}))\]
        and analogously:
        \[\forall e\in EG,\ t(\psi([e]_{\sim}))=t(\phi(e))=\phi(t(e))=\psi([t(e)]_{\sim})=\psi(t([e]_{\sim}))\]
        $\psi$ is surjective since $\phi$ is. Finally $\psi$ is injective since:
        \[\forall x,y\in VG\cup EG,\ \psi([x]_{\sim})=\psi([y]_{\sim})\implies \phi(x)=\phi(y)\implies [x]_{\sim}=[y]_{\sim}\]
        Thus $\psi$ is the graph isomorphism we are looking for.
    \end{proof}
    The non-edge collapsing condition means that if two vertices are $o$-equivalent in the graph then they are $o$-equivalent in the quotient graph. This observation leads us to show that any graph has a non-edge collapsing equivalence relation that has $\sim_o$ as its vertex component. Furthermore we can note that the vertex component of the non-edge-collapsing equiv. relation determines the structure of the graphs. Combining these properties allows us to find for each graph a unique minimal factor over a non-edge-collapsing equivalence relation.
    \begin{lemma}\label{lem0}
        Let $G$ be a graph then we have:
        \begin{itemize}
            \item There exists a non edge collapsing equivalence relation whose vertex component is $\sim_o$
            \item For any two non edge collapsing equivalence relations $\sim^1$ and $\sim^2$ with $\sim^1_V=\sim^2_V$  we have a isomorphism:
            \[\phi: G/{\sim^1}\to G/{\sim^2}\]
            with $\forall v\in V \phi([v]_{\sim^1})=[v]_{\sim^2}$
            \item For any non edge collapsing equivalence relation $\sim$ on $G$ and any non edge collapsing equivalence relation $\approx$ on $G/{\sim}$  we have some non edge collapsing equivalence relation $\threesim$ on $G$ s.t.
            \[\forall x,y\in VG\cup EG,\ x\sim y\implies x\threesim y\]
            and we have:
            \[G/{\threesim}\cong(G/{\sim})/\approx\]
            \item For any two non edge-collapsing equivalence relations $\sim^1,\sim^2$ there is another non edge-collapsing relation $\sim$ s.t.
            \[ \forall v,w\in VG,\ v\sim^1_Vw \lor v\sim^2_Vw\implies v\sim_V w \]
         \end{itemize}
    \end{lemma}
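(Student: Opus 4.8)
All four items rest on the following easy companion to \lemref{lem-1}: whenever $\sim$ is a non edge collapsing equivalence relation on $G$, the quotient map $q_\sim\colon G\to G/{\sim}$ is itself a non edge collapsing homomorphism (it is surjective by construction, and $q_\sim|_{E_o(v)}\colon E_o(v)\to E_o([v]_\sim)$ is onto by the existence part and injective by the uniqueness part --- applied to $v\sim v$ --- of the non edge collapsing condition). I would establish this first. For the first item the plan is to promote the isomorphisms witnessing $\sim_o$ to an edge relation: fix a representative $r$ of each $\sim_o$-class together with rooted isomorphisms $\Phi_{r,v}\colon G_r\to G_v$ (with $\Phi_{r,r}=\mathrm{id}$) for all $v\sim_o r$, and set $\Phi_{v,v'}:=\Phi_{r,v'}\circ\Phi_{r,v}^{-1}$, obtaining a coherent system ($\Phi_{v',v''}\circ\Phi_{v,v'}=\Phi_{v,v''}$, $\Phi_{v,v}=\mathrm{id}$). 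Each $e\in E_o(v)$ is an edge of $G_v$ (its terminus is a descendant of $v$), so one can define $e\sim_E e'$ iff $o(e)\sim_o o(e')$ and $\Phi_{o(e),o(e')}(e)=e'$; coherence makes $\sim_E$ an equivalence relation, the fact that $\Phi_{v,v'}$ carries descendants of $t(e)$ onto descendants of $\Phi_{v,v'}(t(e))$ and hence restricts to a rooted isomorphism $G_{t(e)}\to G_{t(\Phi_{v,v'}(e))}$ yields $t(e)\sim_o t(e')$ (so $(\sim_o,\sim_E)$ is a graph equivalence relation), and $\Phi_{v,v'}(e)$ witnesses the ``$\exists!$'' clause, uniqueness being immediate from the definition of $\sim_E$.

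For the second item, write $R:=\sim^1_V=\sim^2_V$ (so both quotients have vertex set $V/R$); the quotient maps $q^1,q^2$ are non edge collapsing, so for a chosen representative $v_c$ of each class $c\in V/R$ the maps $q^i|_{E_o(v_c)}\colon E_o(v_c)\to E_o(c)$ are bijections. I would take $\phi$ to be the identity on $V/R$ and, on each $E_o(c)$, the transport $q^2|_{E_o(v_c)}\circ(q^1|_{E_o(v_c)})^{-1}$; this is a bijection on vertices and on edges, and it respects origins and termini because $[o(e)]_R$ and $[t(e)]_R$ depend only on $e$, so $\phi$ is the desired isomorphism (automatically $\phi([v]_{\sim^1})=[v]_R=[v]_{\sim^2}$).

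For the third item, I would note that a composite of non edge collapsing homomorphisms is again one (composite of surjections; composite of bijections on out-neighbourhoods); since $q_\sim$ and $q_\approx$ are non edge collapsing homomorphisms by the companion lemma, so is $\Pi:=q_\approx\circ q_\sim\colon G\to(G/{\sim})/{\approx}$. Applying \lemref{lem-1} to $\Pi$ produces a non edge collapsing equivalence relation $\threesim$ on $G$ --- namely $x\threesim y\iff\Pi(x)=\Pi(y)$ --- together with an isomorphism $G/{\threesim}\cong(G/{\sim})/{\approx}$, and $x\sim y$ forces $q_\sim(x)=q_\sim(y)$, hence $\Pi(x)=\Pi(y)$, hence $x\threesim y$.

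The fourth item is the delicate one and I expect it to be the main obstacle. Let $R$ be the equivalence relation on $VG$ generated by $\sim^1_V\cup\sim^2_V$; I will realise it. For $u\sim^i_V u'$, the ``$\exists!$'' clause gives a bijection $\beta^i_{u,u'}\colon E_o(u)\to E_o(u')$ sending $e$ to its unique $\sim^i_E$-partner in $E_o(u')$, with $t(\beta^i_{u,u'}(e))\mathrel{R}t(e)$. One cannot simply take $\sim_E$ to be generated by $\sim^1_E\cup\sim^2_E$: $\sim^1$ and $\sim^2$ may match the out-edges of an $R$-related pair by different bijections, so the generated relation would have several out-edges of a single vertex equivalent to one edge, destroying uniqueness. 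The fix, exactly as in the first item, is to transport through representatives: fix a representative $r_C$ of each $R$-class $C$ and, for each $v\in C$, one chain of $\sim^1_V$/$\sim^2_V$-steps from $r_C$ to $v$, let $\beta_{r_C,v}$ be the composite of the corresponding $\beta^i$'s (so $\beta_{r_C,r_C}=\mathrm{id}$), and put $\beta_{v,v'}:=\beta_{r_C,v'}\circ\beta_{r_C,v}^{-1}$. This is a coherent system of bijections indexed by $R$ that respects termini up to $R$, so setting $e\sim_E e'$ iff $o(e)\mathrel{R}o(e')$ and $\beta_{o(e),o(e')}(e)=e'$ produces, as before, a non edge collapsing graph equivalence relation whose vertex part is $R\supseteq\sim^1_V\cup\sim^2_V$. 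The real subtlety is this ``monodromy'': different chains between $R$-related vertices may yield different composite matchings, which is why the naive generated relation fails and why fixing a representative together with a path to it is exactly what makes the construction go through.
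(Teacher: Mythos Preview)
Your arguments for the first three items are essentially the paper's. For the first item the paper also builds a coherent family of rooted isomorphisms $\phi_{v,w}$ and declares $e\sim_E f\iff o(e)\sim_o o(f)\ \wedge\ \phi_{o(e),o(f)}(e)=f$; for the second it likewise picks vertex representatives and uses that $\bigcup_{v_0}E_o(v_0)$ is a common transversal for both edge relations, which is exactly your transport $q^2|_{E_o(v_c)}\circ(q^1|_{E_o(v_c)})^{-1}$; and for the third it writes down $x\threesim y\iff [x]_\sim\approx[y]_\sim$ directly rather than routing through \lemref{lem-1}, but that is the same relation as your $\Pi(x)=\Pi(y)$.

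For the fourth item your approach genuinely differs from the paper's, and it is the more robust of the two. The paper simply puts $v\sim_V w\iff v\sim^1_V w\ \lor\ v\sim^2_V w$ and then defines the edge relation by a case split on which of $\sim^1,\sim^2$ relates the origins. As you implicitly anticipated, the bare union of two equivalence relations need not be transitive, and the case-by-case edge rule likewise fails transitivity when a chain mixes $\sim^1$- and $\sim^2$-steps; so the paper's construction does not literally yield a graph equivalence relation in general. Your passage to the \emph{generated} vertex relation $R$, together with transport of the out-edge bijections $\beta^i_{u,u'}$ along a fixed chain to a class representative (reusing the mechanism from the first item), produces a coherent system and hence a bona fide non-edge-collapsing graph equivalence relation with vertex part $R\supseteq\sim^1_V\cup\sim^2_V$. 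The ``monodromy'' issue you flag is exactly the obstruction to the naive approach, and anchoring at representatives is exactly the cure; what you lose is only a little elegance, what you gain is correctness in full generality.
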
 
    \begin{proof}
        For the first point we will fix for any two vertices $v\sim_o w$ and a rooted isomorphism $\phi_{v,w}:G_v\to G_w$ we can choose these isomorphisms in such a manner that for any $v\sim_o w\sim_o u$ we have:
        \begin{align*}
            \phi_{v,v}&=\text{id.}\\
            \phi_{v,w}&=\phi_{w,v}^{-1}\\
            \phi_{v,u}&=\phi_{v,w}\circ \phi_{w,u}
        \end{align*}
        Now we can see that if we define $e\sim_e f\iff (o(e)\sim_o o(f))\land \phi_{o(e),o(f)}(e)=f$, this will define an equivalence relation. And furthermore if we have $e\sim_e f$ then by definition we have $o(e)\sim_o o(f)$ and furthermore since $\phi_{o(e),o(f)}(e)=f$ we must have $\phi_{o(e),o(f)}(t(e))=t(f)$ and thus the function $\phi_{o(e),o(f)}|_{G_{t(e)}}$ gives us a rooted isomorphism between $G_{t(e)}$ and $G_{t(f)}$ and thus $t(e)\sim_o t(f)$.\\
        This shows that $(\sim_o,\sim_e)$ is a graph equivalence relation. Now to show that it is non edge collapsing we will look at any edge $e$, with $o(e)=v$ and some vertex $v'\sim_o v$ then we see that $\phi_{v,v'}(e)$ is the unique edge that is equivalent to $e$ and has origin in $v'$. Thus the equivalence relation is non edge collapsing. This gives us the first point.\\
        For the second point we will take two non edge collapsing equivalence relations $\sim^1$ and $\sim^2$ with $\sim_V:=\sim^1_V=\sim^2_V$. Now let the set $V_0\subset V$ be a set of distinct representatives of each equivalence class of $\sim_V$. Now we can define:
        \[E_0:=\bigcup_{v_0\in V_0}E_o(v_0)\]
        we will show that $E_0$ is a set of representatives for each equivalence class of $\sim^1_E$ and $\sim^2_E$. For this we will fix some $i\in \{1,2\}$. Now say we have $e_0\sim^i_E f_0$ for some $e_0,f_0\in E_0$ then we must have $o(e_0)\sim_V o(f_0)$, but since we have by definition $o(e_0),o(f_0)\in V_0$, it must be that $o(e_0)=o(f_0)$. Since the equivalence relation $\sim_e^i$ is non edge collapsing we must have $e_0=f_0$.\\
       For any $e\in E$ we note that we have some $v_0\in V_0$ s.t.~$o(e)\sim_V v_0$. Since $\sim^i$ is non edge collapsing, there must be some $e_0\in E_o(v_0)\subseteq E_0$ s.t.~$e\sim^i_E e_0$.\\
        This shows that $E_0$ is a set of representatives each equivalence class of $\sim^1_E$ and of $\sim^2_E$ (as $i\in\{1,2\}$ was  arbitrary).\\
        Using this we can construct an isomorphism $\Psi:G/{\sim^1}\to G/{\sim^2}$ by setting:
        \begin{align*}
        \forall v\in V,\ \Psi_V([v]_{\sim_V}):&=[v]_{\sim_V}\\
        \forall e_0\in E_0\ \Psi_E([e_0]_{\sim^1_E}):&=[e_0]_{\sim^2_E}
        \end{align*}
        It is clear that $\Psi_V$ is a bijection. The fact that $\Psi_E$ is well defined and also a bijection, follows from $E_0$ being a representative set for both $\sim^1_E$ and $\sim^2_E$.\\
        To show that it is an isomorphism we just look at some $[e]_{\sim^1_E}\in E/{\sim^1_E}$ and pick the unique $e_0\in E_0$ s.t.~$[e]_{\sim^1_E}=[e_0]_{\sim^1_E}$ we then have:
        \begin{align*}
            o(\Psi_E([e]_{\sim^1_E}))&=o(\Psi_E([e_0]_{\sim^1_E}))=o([e_0]_{\sim^2_E})=\Psi_V(o([e_0]_{\sim^2_E}))=\Psi_V(o([e]_{\sim^2_E}))\\
            t(\Psi_E([e]_{\sim^1_E}))&=t(\Psi_E([e_0]_{\sim^1_E}))=t([e_0]_{\sim^2_E})=\Psi_V(t([e_0]_{\sim^2_E}))=\Psi_V(t([e]_{\sim^2_E}))\\
        \end{align*}
        This shows that $\Psi$ is in fact an isomorphism.\\
        For the third point we define $\threesim$ as follows: 
        \[\forall x,y\in VG\cup EG,\ x\threesim y\iff [x]_{\sim}\approx [y]_{\sim}\]
        It is immediate that $\threesim$ is a graph equivalence relation, as $\sim$ and $\approx$ are. To show that it is non edge collapsing take some $v\threesim v'\in VG$ and some $e\in E_o(v)$, since $\approx$ is non edge collapsing we have some unique $[e']_{\sim}\in E_o([v']_{\sim})$ with $[e']_{\sim}\approx[e]_{\sim}$.\\
        Since $\sim$ is also non edge collapsing we may assume that $e'$ is the unique representative of $[v']_{\sim}$ with $o(e')=v'$ then by definition this is the unique $e'\in E_o(v')$ with $e'\threesim e$.\\
        The fact $G/{\threesim}\cong(G/{\sim})/\approx$ follows by a standard result.\\
        For the last point we will define the equivalence relation $\sim$ on the vertices $G$ by setting:
        \[ \forall v,w\in V\ v\sim_V w \iff v\sim^1_V w \lor v\sim^2_V w\]
        To define it on the edges we take some $e,f\in EG$ and consider the cases:
        \begin{itemize}
            \item If $o(e)\sim^1 o(f)$ then we define:
            \[e\sim f\iff e\sim^1 f\]
            \item If we have $o(e)\sim^2 o(f)$, but $o(e)\not\sim^1 o(f)$ we define:
            \[e\sim f\iff e\sim^2 f\]
            \item If we have both $o(e)\not\sim^1 o(f)$ and $o(e)\not\sim^2 o(f)$ we also have $e\not\sim f$.
        \end{itemize}
        To see that this is a graph equivalence relations we note that if we have $e\sim f$ we have $e\sim^if$ for some $i=1,2$ and thus $o(e)\sim^i o(f)$ and $t(e)\sim^i t(f)$ and thus $o(e)\sim o(f)$ and $t(e)\sim t(f)$.\\
        To show that the relation is non edge-collapsing we take some $v\sim v'\in VG$ and some $e\in E_o(v)$. Now if we have $v\sim^1 v'$ we must have, since $\sim^1$ is non edge collapsing, a unique $e'\in E_o(v')$ s.t.~$e\sim^1 e'$. By definition this is also the unique $e'\in E_o(v')$ s.t.~$e\sim e'$.\\
        If we have $v\not\sim^1 v'$, then we must have $v\sim^2 v'$ and since $\sim^2$ is also non edge collapsing we have a unique $e'\in E_o(v')$ s.t.~$e\sim^2 e'$. By definition this is also the unique $e'\in E_o(v')$ s.t.~$e\sim e'$.
    \end{proof}
    We note that if we sort non edge collapsing equivalence relations on $G$ by setting $\sim^1\prec\sim^2$ if and only if:
    \[\forall x,y\in VG\cup EG,\ x\sim^1y\implies x\sim^2y\]
    We note that if we have some $\prec$-maximal non edge collapsing equivalence relations $\sim^1$ and $\sim^2$ we must have by the last point of the above lemma we must have:
    \[\sim^1_V=\sim^2_V\]
    We will be calling a graph that has no non-trivial non edge collapsing equivalence relation $\textbf{non-redundant}$. For any Graph $G$ and any non-edge collapsing equivalence relation $\sim$ on $G$ that is $\prec$-maximal, the graph $G/{\sim}$ is non-redundant by the 3rd point of \lemref{lem0}
    From combining the third and fourth point of the above lemma we can see that all non redundant graph of such a form $G/{\sim}$ will be isomorphic to each other. So we get a unique non-redundant graph that is a factor of $G$.\\
    Now we can show that for any non edge collapsing equivalence relation on a graph the quotient graph will produce the same unfolding tree.
    \begin{lemma}\label{lem1}
        For any non-edge collapsing equivalence relation $\sim$ on a rooted graph $G$ with root $S$, we have:
        \[\mathcal{T}(G,S)\cong_R \mathcal{T}(G/{\sim},[S]_{\sim})\]
    \end{lemma}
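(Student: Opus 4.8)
The plan is to build an explicit rooted isomorphism $\Phi : \mathcal{T}(G,S) \to \mathcal{T}(G/{\sim}, [S]_\sim)$ by induction on walk length, using the non-edge-collapsing condition at each step to guarantee that the inductive choice of edge-bijection exists and is unique. First I would set up the quotient homomorphism $q : G \to G/{\sim}$ (sending $v \mapsto [v]_\sim$ on vertices and $e \mapsto [e]_\sim$ on edges), which is a surjective graph homomorphism, and I would record the key fact that by the non-edge-collapsing property, for every $v \in VG$ the restriction $q|_{E_o(v)} : E_o(v) \to E_o([v]_\sim)$ is a \emph{bijection}: surjectivity holds because $q$ is surjective on edges and any edge of $G/{\sim}$ with origin $[v]_\sim$ is $[e']_\sim$ for some $e'$ with $o(e') \sim v$, so by non-edge-collapsing there is $e \in E_o(v)$ with $e \sim e'$; injectivity is exactly the uniqueness clause "$\exists!$" in \defref{def4}.

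Next I would define $\Phi$ recursively on vertices of $\mathcal{T}(G,S)$, i.e.\ on walks $w \in \mathcal{W}(G,S)$, maintaining the invariant that $\Phi(w)$ is a walk in $\mathcal{W}(G/{\sim}, [S]_\sim)$ with $|\Phi(w)| = |w|$ and $T(\Phi(w)) = [T(w)]_\sim$. The base case is $\Phi(\epsilon_S) := \epsilon_{[S]_\sim}$. For the inductive step, given $\Phi(w)$ already defined with terminus $[T(w)]_\sim$, any one-edge extension $we \in \mathcal{W}(G,S)$ has $e \in E_o(T(w))$; since $q|_{E_o(T(w))}$ is a bijection onto $E_o([T(w)]_\sim)$, the edge $[e]_\sim$ is a legitimate extension of $\Phi(w)$, so set $\Phi(we) := \Phi(w)[e]_\sim$. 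This is well-defined because distinct children $we_1 \neq we_2$ of $w$ correspond to distinct $e_1 \neq e_2$ in $E_o(T(w))$, hence distinct $[e_1]_\sim \neq [e_2]_\sim$ (injectivity), and conversely every child of $\Phi(w)$ is hit (surjectivity). Then define $\Phi_E$ on tree-edges by $\Phi_E((w, we)) := (\Phi(w), \Phi(we))$, which is automatically compatible with the tree's origin and terminus maps.

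Finally I would verify $\Phi$ is a rooted isomorphism. It is a graph homomorphism by the definition of $\Phi_E$; it preserves the root by the base case. Injectivity on vertices follows by induction on length: $\Phi(w) = \Phi(w')$ forces $|w| = |w'|$ and, peeling off the last edge and using injectivity of $q$ on the relevant outgoing neighbourhood together with the inductive hypothesis, forces $w = w'$; since $\mathcal{T}(G,S)$ has no clashes, injectivity on vertices gives injectivity on edges. Surjectivity also follows by induction on length: every walk in $\mathcal{W}(G/{\sim},[S]_\sim)$ is a one-edge extension of a shorter one, which is $\Phi(w)$ for some $w$ by hypothesis, and the extending edge lies in $E_o([T(w)]_\sim) = q(E_o(T(w)))$ so it lifts.

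The main obstacle — really the only nontrivial point — is the bookkeeping in the inductive step: one must be careful that the statement "$q$ restricts to a bijection $E_o(v) \to E_o([v]_\sim)$" is genuinely what the non-edge-collapsing condition gives, including checking that $E_o([v]_\sim)$ is exactly $\{[e]_\sim : e \in E_o(v)\}$ and not something larger. Everything else is routine. Alternatively, one can shortcut the verification: since $q$ is surjective and restricts to bijections on outgoing neighbourhoods, $q$ is non-edge-collapsing in the sense following \lemref{lem-1}, so $G/{\sim} \cong G/{\sim'}$ for $\sim'$ defined by $x \sim' y \iff q(x) = q(y)$, which is just $\sim$ itself — making the two quotient constructions agree and letting one invoke functoriality of $w \mapsto \mathcal{T}(G,S)$ directly; but the explicit construction above is cleaner to present in full.
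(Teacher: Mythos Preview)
Your proposal is correct and is essentially the same argument as the paper's: your inductively defined map $\Phi(we)=\Phi(w)[e]_\sim$ unfolds precisely to the paper's explicit map $\Psi(e_1\dots e_n)=[e_1]_\sim\dots[e_n]_\sim$, and your injectivity/surjectivity arguments by induction on length mirror the paper's direct checks (first-difference argument for injectivity, recursive lifting of representatives for surjectivity). The only cosmetic difference is that you isolate the local fact ``$q|_{E_o(v)}:E_o(v)\to E_o([v]_\sim)$ is a bijection'' up front, whereas the paper invokes the non-edge-collapsing hypothesis inline at the points where it is needed.
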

    \begin{proof}
        In this proof we will omit the subscripts from $\sim_V$ and $\sim_E$ for notational convenience.\\
        Define the function as follows:
        \[\Psi: \begin{cases}
                    \mathcal{T}(G,S)\to\mathcal{T}(G/{\sim},[S]_{\sim})\\
                    e_1e_2\dots e_n\mapsto [e_1]_{\sim}[e_2]_{\sim}\dots [e_n]_{\sim} 
                \end{cases}\]
        This function is well defined since for any $e_1e_2\dots e_n\in \mathcal{T}(G,S)$ we have $[S]_{\sim}=[o(e_1)]_{\sim}$ and for any $i<n$ we have $[o(e_{i+1})]_{\sim}=[t(e_i)]_{\sim}$. It is clear that $\Psi$ is a tree homomorphism since $\forall e\in EG,ve\in \mathcal{W}(G,S)\Psi(ve)=\Psi(v)[e]_{\sim}$.\\
        To show injectivity assume there are two $e_1e_2\dots e_n\neq d_1d_2\dots d_m\in \mathcal{W}(G,S)$ with $\Psi(e_1e_2\dots e_n)=\Psi(d_1d_2\dots d_m)$. Firstly since $\Psi$ preserves the word length we must have $n=m$. Secondly since the words are different let $k$ be the minimal index s.t.~$e_k\neq d_k$. If $k=1$ then we must have $o(e_k)=S=o(d_k)$, otherwise we have $o(e_k)=t(e_{k-1})=t(d_{k-1})=o(d_k)$ since $e_{k-1}=d_{k-1}$. So in each case $o(e_k)=o(d_k)$, but since $e_k\neq d_k$ and as $\sim$ is non-edge-collapsing we must have $e_k\not\sim d_k$ and thus $[e_k]_{\sim}\neq[d_k]_{\sim}$. However this gives us:
        \[\Psi(e_1e_2\dots e_n)= [e_1]_{\sim}[e_2]_{\sim}\dots [e_n]_{\sim} \neq [d_1]_{\sim}[d_2]_{\sim}\dots [d_n]_{\sim}=\Psi(d_1d_2\dots d_n) \]
        contradicting our assumption. This gives us injectivity.\\
        For the surjectivity we take some $[e_1]_{\sim}[e_2]_{\sim}\dots [e_n]_{\sim}\in \mathcal{W}(G/{\sim},[S]_{\sim})$ and recursively pick representatives $e'_1,e'_2,\dots e'_n$ of $[e_1]_{\sim},[e_2]_{\sim},\dots,[e_n]_{\sim}$ respectively, that form a walk in $G$ starting in $S$, as follows.
        Since we have $o(e_1)\sim S$, by our assumption there is some $e'_1$ with $e_1\sim e'_1$ and $o(e'_1)=S$. Now recursively when we have already chosen $e'_1,\dots,e'_k$ for some $k<n$ then since $e'_k\sim e_k$ we have $t(e'_k)\sim t(e_k)\sim o(e_{k+1})$, so again we can find an unique edge $e'_{k+1}$ s.t.~$o(e'_{k+1})=t(e'_k)$ and $e'_{k+1}\sim e_{k+1}$. This way we find $e'_1,e'_2,\dots, e'_n$ s.t.~$e'_1e'_2\dots e'_n\in\mathcal{W}(G,S)$ and $e_1\sim e'_1,\dots, e_n\sim e'_n$, this gives us:
        \[\Psi(e'_1e'_2\dots e'_n)=[e'_1]_{\sim}[e'_2]_{\sim}\dots [e'_n]_{\sim}=[e_1]_{\sim}[e_2]_{\sim}\dots [e_n]_{\sim} \]
        showing surjectivity.\\
        So we have shown that $\Psi$ is a tree isomorphism. Clearly we also have $\Psi(\varepsilon_S)=\varepsilon_{[S]_{\sim}}$ thus making it a rooted isomorphism.
    \end{proof}
    We can show that the above lemma describes all the cases of two graphs that produce the same tree for this we will define two equivalence relations on the class $\Gamma$ of pairs $(G,S)$, where $G$ is a rooted graph and $S$ is a root of $G$. 
    \begin{defn}\label{def5}
        Let $\simeq$ be the smallest equivalence relation on $\Gamma$ s.t.~any root isomorphic rooted graphs are $\simeq$ equivalent and for any $(G,S)\in \Gamma$ and any non edge collapsing equivalence relation on $G$: $\sim$, we have:
        \[(G,S)\simeq(G/{\sim},[S]_{\sim})\]
    \end{defn}
    This equivalence relation exactly describes when two graphs produce the same trees:
    \begin{theorem}\label{thm1}
        For any $ (G,S),(H,R)\in \Gamma$ we have:
        \[ \mathcal{T}(G,S)\cong_R \mathcal{T}(H,R)\]
        if and only if:
        \[(G,S)\simeq (H,R)\]
    \end{theorem}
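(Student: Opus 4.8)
The plan is to prove both directions separately. The backward direction is essentially already done: if $(G,S)\simeq(H,R)$, then since $\simeq$ is generated by root-isomorphism (which trivially preserves unfolding trees up to rooted isomorphism) and by the move $(G,S)\rightsquigarrow(G/{\sim},[S]_\sim)$ for non-edge-collapsing $\sim$ (which preserves the unfolding tree by \lemref{lem1}), it follows that $\mathcal{T}(-,-)$ is constant up to $\cong_R$ on each $\simeq$-class. Formally I would note that the relation ``$\mathcal{T}(G,S)\cong_R\mathcal{T}(H,R)$'' is itself an equivalence relation on $\Gamma$ containing both the generating relations of $\simeq$, hence contains $\simeq$; this is the two-line argument.

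The forward direction is the substance. Suppose $\mathcal{T}(G,S)\cong_R\mathcal{T}(H,R)$. The idea is to factor both $G$ and $H$ down to non-redundant graphs and show those coincide. Concretely, I would first replace $G$ by its unique non-redundant quotient $G'=G/{\sim_G}$ for a $\prec$-maximal non-edge-collapsing $\sim_G$, and likewise $H$ by $H'=H/{\sim_H}$; by \lemref{lem1} and the remarks after \lemref{lem0}, $(G,S)\simeq(G',[S])$ and $\mathcal{T}(G',[S])\cong_R\mathcal{T}(G,S)$, and similarly for $H$. So it suffices to show that if $G',H'$ are non-redundant and $\mathcal{T}(G',S')\cong_R\mathcal{T}(H',R')$, then $(G',S')\cong_R(H',R')$; transitivity of $\simeq$ then closes the argument. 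To prove this reduced claim, I would use the vertex labelling by $\sim_o$-classes as in the proof of \thmref{thm0}: recall from that proof that from a label-regular tree one reconstructs a finite rooted graph $(\mathcal{G},\cdot)$ with vertex set the set of $\sim_o$-classes of tree vertices, and that $\mathcal{T}(\mathcal{G},\cdot)\cong_R\mathcal{T}$. Applying this construction to the common tree $\mathcal{T}\cong_R\mathcal{T}(G',S')\cong_R\mathcal{T}(H',R')$ produces a single graph $\mathcal{G}$. It then remains to show $G'\cong_R\mathcal{G}$ (and likewise $H'\cong_R\mathcal{G}$), i.e. that a non-redundant graph is rooted-isomorphic to the canonical graph reconstructed from its own unfolding tree.

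For that last point, I would build the map explicitly. Given a non-redundant rooted graph $(G',S')$, there is a natural surjection $\pi$ from $\mathcal{W}(G',S')=V\mathcal{T}(G',S')$ onto $VG'$ sending a walk $w$ to $T(w)$ (surjective by star-connectedness with root $S'$), and this descends along the reconstruction because: if $T(w_1)=T(w_2)$ then as shown in the proof of \thmref{thm0} the subtrees $\mathcal{T}(G',S')_{w_1}$ and $\mathcal{T}(G',S')_{w_2}$ are rooted-isomorphic, so $w_1\sim_o w_2$ in the tree. Thus $w\mapsto T(w)$ factors through the $\sim_o$-classes, giving a surjective map $VG'\twoheadleftarrow\{[w]_{\sim_o}\}=V\mathcal{G}\to$ ... — more precisely it gives a well-defined surjection from $V\mathcal{G}$ onto $VG'$ sending $[w]_{\sim_o}$ to $T(w)$, which together with the analogous map on edges is a non-edge-collapsing graph homomorphism $\mathcal{G}\to G'$ by \lemref{lem-1}. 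Since $G'$ is non-redundant, \lemref{lem0} forces this non-edge-collapsing homomorphism to be an isomorphism (the induced equivalence relation on $\mathcal{G}$ must be trivial, because any non-trivial one would contradict non-redundancy after transporting it — here I need the converse direction: that $\mathcal{G}$ itself, or rather the image, being non-redundant means the map is injective). The cleanest phrasing: the non-edge-collapsing homomorphism $\mathcal{G}\to G'$ exhibits $G'$ as a non-redundant quotient of $\mathcal{G}$, and symmetrically $G'\to\mathcal{G}$... I expect the main obstacle to be exactly this rigidity step — showing that a non-edge-collapsing homomorphism onto a non-redundant graph is forced to be an isomorphism, or equivalently that the canonical $\sim_o$-graph is the unique non-redundant graph with the given unfolding tree. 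Once that is in hand, chaining $(G,S)\simeq(G',S')\cong_R\mathcal{G}\cong_R(H',R')\simeq(H,R)$ and using that root-isomorphic graphs are $\simeq$-equivalent finishes the proof.
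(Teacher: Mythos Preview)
Your backward direction is fine and matches the paper. For the forward direction, however, the paper takes a much shorter route that bypasses your rigidity step entirely. It observes that the map $\Phi:\mathcal{T}(G,S)\to G$ given by $p\mapsto T(p)$, $(p,pe)\mapsto e$ is surjective and restricts to a bijection on each $E_o(p)$; so by \lemref{lem-1} there is a non-edge-collapsing relation $\sim$ on the tree with $\mathcal{T}(G,S)/{\sim}\cong_R(G,S)$. Applying this to a single tree $\mathcal{T}$ rooted-isomorphic to both $\mathcal{T}(G,S)$ and $\mathcal{T}(H,R)$ gives $(G,S)\simeq(\mathcal{T},\epsilon)\simeq(H,R)$ directly. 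No reduction to non-redundant graphs, no canonical $\sim_o$-quotient, no rigidity argument.

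Your route --- pass to non-redundant quotients $G',H'$, reconstruct a canonical graph $\mathcal{G}$ from the tree via $\sim_o$, and show $G'\cong_R\mathcal{G}\cong_R H'$ --- is essentially an attempt to prove \coref{cor1} before \thmref{thm1} rather than after. It can be made to work, but as written it has a gap. You argue that $T(w_1)=T(w_2)\implies w_1\sim_o w_2$, which is correct; but this gives a well-defined map $VG'\to V\mathcal{G}$, not the map $V\mathcal{G}\to VG'$, $[w]_{\sim_o}\mapsto T(w)$, that you then claim to use. The latter requires the converse implication $w_1\sim_o w_2\implies T(w_1)=T(w_2)$, which you have not established (and which is essentially equivalent to the statement you are trying to prove). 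Relatedly, the rigidity principle you float --- that a non-edge-collapsing surjection onto a non-redundant graph is an isomorphism --- is false as stated: the map $\Phi:\mathcal{T}\to G'$ above is exactly such a surjection and is almost never injective. What is true is that a non-edge-collapsing surjection \emph{from} a non-redundant graph is an isomorphism; so you would need $\mathcal{G}$, not $G'$, to be non-redundant. The cleanest repair of your argument is to drop $\mathcal{G}$ altogether: realise both $G'$ and $H'$ as quotients $\mathcal{T}/{\sim^1}$, $\mathcal{T}/{\sim^2}$ via the map $\Phi$ above, note that non-redundancy of $G',H'$ forces $\sim^1,\sim^2$ to be $\prec$-maximal (\lemref{lem0}, third point), and conclude $G'\cong_R H'$ from points two and four of \lemref{lem0}. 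But at that point you have rediscovered the paper's argument, only with an unnecessary preliminary reduction to the non-redundant case.
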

    \begin{proof}
        The ``if'' part of the implication follows directly from \lemref{lem1}.\\
        For the converse, we first need:
        \begin{clm}\label{clm0}
            For each rooted graph $(G,S)$ there is a non-edge-collapsing equivalence relation $\sim$ on $\mathcal{T}(G,S)$ s.t.
            \[ G\cong_R \mathcal{T}(G,S)/{\sim} \]
        \end{clm}
        \begin{proof}
            To prove this we will define a homomorphism $\Phi:\mathcal{T}(G,S) \to G$ as follows:
            \begin{align*}
                \forall p\in\mathcal{W}(G,S),\ & \Phi(p):=T(p)\\
                \forall p\in\mathcal{W}(G,S),\forall e\in E_o(T(p)),\ & \Phi(p,pe):= e
            \end{align*}
            note that this is a homomorphism, that is surjective since $S$ is a root. Also the function $\Phi|_{E_o(v)}:E_o(v)\to E_o(\Phi(v))$ is a bijection by the construction of the unfolding tree. So by \lemref{lem-1} there is some non edge collapsing equivalence relation $\sim$ s.t.:
            \[ G\cong_R \mathcal{T}(G,S)/{\sim} \]
        \end{proof}
        Now to prove the converse implication, using the above claim we first take some tree $\mathcal{T}$ s.t.
        \[\mathcal{T}(G,S)\cong\mathcal{T}\cong\mathcal{T}(H,R)\]
        By the above claim we then have two non edge collapsing equivalence relations $\sim^1$ and $\sim^2$, s.t.:
        \[G\cong_R\mathcal{T}/{\sim^1}\qquad H\cong_R\mathcal{T}/{\sim^2}\]
        so:
        \[G\simeq\mathcal{T}\simeq H\]
        and thus:
        \[G\simeq H\]

    \end{proof}
     In order to better describe which graphs produce the same unfolding tree we will have to take a closer look at $\simeq$:
    \begin{lemma}\label{lem1.5}
        For any two rooted graphs $(G,S),(H,R)\in \Gamma$ we have:
        \[(G,S)\simeq (H,R)\]
        if and only if there are some non-edge collapsing equivalence relations $\sim_G$, $\sim_H$ on $G$ and $H$ resp. s.t.:
         \[(G/{\sim_G},[S]_{\sim_G})\cong_R (H/{\sim_H},[R]_{\sim_H})\]
    \end{lemma}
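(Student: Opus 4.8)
The plan is to prove both directions, with the nontrivial content being the forward direction. The ``if'' direction is immediate: if such $\sim_G,\sim_H$ exist, then by the definition of $\simeq$ we have $(G,S)\simeq(G/{\sim_G},[S]_{\sim_G})$ and $(H,R)\simeq(H/{\sim_H},[R]_{\sim_H})$, and since rooted-isomorphic graphs are $\simeq$-equivalent, $(G/{\sim_G},[S]_{\sim_G})\simeq(H/{\sim_H},[R]_{\sim_H})$; transitivity then gives $(G,S)\simeq(H,R)$.

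For the forward direction, the strategy is to show that the relation ``$(G,S)\mathrel{R}(H,R)$ iff there exist non-edge-collapsing $\sim_G,\sim_H$ with $(G/{\sim_G},[S]_{\sim_G})\cong_R(H/{\sim_H},[R]_{\sim_H})$'' is itself an equivalence relation on $\Gamma$ containing rooted-isomorphism and closed under passing to non-edge-collapsing quotients; since $\simeq$ is by \defref{def5} the \emph{smallest} such relation, this yields $\simeq\ \subseteq\ R$, which is exactly what we want. Reflexivity (take both relations trivial) and symmetry are clear, and $R$ obviously contains rooted-isomorphism (trivial quotients) and satisfies $(G,S)\mathrel{R}(G/{\sim},[S]_{\sim})$ for any non-edge-collapsing $\sim$ (take $\sim_G=\sim$ and $\sim_H$ trivial, using \lemref{lem0} point 3 to identify iterated quotients). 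The real work is \textbf{transitivity}: given non-edge-collapsing $\sim_G,\sim_H$ with $G/{\sim_G}\cong_R H/{\sim_H}$ and non-edge-collapsing $\sim_H',\sim_K$ with $H/{\sim_H'}\cong_R K/{\sim_K}$, we must produce non-edge-collapsing relations on $G$ and $K$ with isomorphic quotients.

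The key tool here is \lemref{lem0}. Using its fourth point, form a non-edge-collapsing relation $\sim_H^\vee$ on $H$ whose vertex component is coarser than both $(\sim_H)_V$ and $(\sim_H')_V$; by repeatedly applying points 3 and 4 one arranges that $\sim_H^\vee$ refines to a common non-edge-collapsing quotient that both $H/{\sim_H}$ and $H/{\sim_H'}$ map onto non-edge-collapsingly — concretely, $H/{\sim_H^\vee}$ is a non-edge-collapsing quotient of each of $H/{\sim_H}$ and $H/{\sim_H'}$ by point 2 (after possibly enlarging to $\prec$-maximal relations, whose vertex components agree). Transporting the non-edge-collapsing quotient map $H/{\sim_H}\to H/{\sim_H^\vee}$ across the isomorphism $G/{\sim_G}\cong_R H/{\sim_H}$ gives a non-edge-collapsing map $G/{\sim_G}\to H/{\sim_H^\vee}$, which by \lemref{lem-1} and point 3 of \lemref{lem0} is a non-edge-collapsing quotient $G/{\sim_G'}$ for some non-edge-collapsing $\sim_G'$ on $G$ refining... rather, coarsening $\sim_G$; symmetrically we get $K/{\sim_K'}\cong_R H/{\sim_H^\vee}$ for suitable non-edge-collapsing $\sim_K'$. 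Then $G/{\sim_G'}\cong_R H/{\sim_H^\vee}\cong_R K/{\sim_K'}$, completing transitivity.

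The main obstacle I expect is bookkeeping in the transitivity step: one must be careful that all the intermediate relations really are \emph{non-edge-collapsing} (not just arbitrary graph equivalence relations) and that the composites of quotient maps and isomorphisms remain non-edge-collapsing so that \lemref{lem-1} applies. The device that makes this clean is to work throughout with $\prec$-maximal non-edge-collapsing relations, since by the remark following \lemref{lem0} any two such on a fixed graph have the same vertex component, and point 2 then gives canonical isomorphisms between their quotients; this collapses the potential ambiguity in ``which'' common refinement on $H$ one picks. An alternative, perhaps slicker route avoids transitivity of $R$ altogether: combine \thmref{thm1} with \lemref{lem1}, since $(G,S)\simeq(H,R)$ iff $\mathcal{T}(G,S)\cong_R\mathcal{T}(H,R)$, and \lemref{lem1} plus \Lemref{lem-1}/Claim \ref{clm0} let one build the required quotients of $G$ and $H$ directly from a tree isomorphism — but this risks circularity depending on how \thmref{thm1}'s proof is arranged, so I would present the self-contained ``smallest equivalence relation'' argument above.
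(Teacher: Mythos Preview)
Your proposal is correct, and in fact your ``alternative, perhaps slicker route'' is precisely what the paper does. The circularity concern is misplaced: \thmref{thm1} is proved before \lemref{lem1.5} in the paper and its proof (via \lemref{lem1} and Claim~\ref{clm0}) does not invoke \lemref{lem1.5} at all. The paper's argument is exactly: from $(G,S)\simeq(H,R)$ one has $\mathcal{T}(G,S)\cong_R\mathcal{T}(H,R)$ by \lemref{lem1}; then Claim~\ref{clm0} realises $G$ and $H$ as non-edge-collapsing quotients $\mathcal{T}/{\sim^1}$, $\mathcal{T}/{\sim^2}$ of this common tree; finally one takes $\prec$-maximal non-edge-collapsing relations $\approx^1,\approx^2$ on these quotients, lifts them via point~3 of \lemref{lem0} to maximal relations $\threesim^1,\threesim^2$ on $\mathcal{T}$, and concludes via point~2 that $G/{\approx^1}\cong_R\mathcal{T}/{\threesim^1}\cong_R\mathcal{T}/{\threesim^2}\cong_R H/{\approx^2}$.

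Your primary route --- showing directly that the relation $R$ is an equivalence relation containing the generators of $\simeq$ --- is a genuinely different organisation. The substantive step, transitivity, is handled by taking maximal non-edge-collapsing relations on the middle graph $H$ and using points~2--4 of \lemref{lem0}; if you unwind this, it is the same ``pass to the unique non-redundant quotient'' mechanism the paper applies, only with $H$ playing the role of the common cover rather than the tree $\mathcal{T}$. The advantage of your approach is that it is entirely internal to the graph/quotient formalism and never touches unfolding trees, so it would stand even without \thmref{thm1}; the advantage of the paper's approach is brevity, since the heavy lifting (Claim~\ref{clm0}) is already done. One small point to tighten in your write-up: the step ``$H/{\sim_H^\vee}$ is a non-edge-collapsing quotient of $H/{\sim_H}$'' is not literally one of the four points of \lemref{lem0}; you obtain it by taking a \emph{maximal} $\approx$ on $H/{\sim_H}$, applying point~3 to get a maximal $\threesim$ on $H$, and then invoking point~2 and the remark on maximal relations to identify $H/{\threesim}$ with $H/{\sim_H^\vee}$ --- exactly the manoeuvre the paper performs on $\mathcal{T}$.
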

    \begin{proof}   
        For the "only if" implication we note that for any two $\sim_G$, $\sim_H$ as in the lemma we have by \defref{def5}:
        \[(G,S)\simeq(G/{\sim_G},[S]_{\sim_G})\simeq (H/{\sim_H},[R]_{\sim_H})\simeq(H,R)\]
        and thus by transitivity we must have:
        \[(G,S)\simeq (H,R)\]
        For the converse implication we will use the proof of \thmref{thm1}. From there we can see that if we have  some tree $\mathcal{T}$ and two non edge collapsing equivalence relations $\sim_1$ and  $\sim_2$ on it s.t.
            \[G\cong_R\mathcal{T}/{\sim^1}\qquad H\cong_R\mathcal{T}/{\sim^2}\]
        Now take $\approx^1$ and $\approx^2$ to be some maximal non-edge collapsing relations on $\mathcal{T}/{\sim^1}$ and $\mathcal{T}/{\sim^2}$. Depending on the context we will also reinterpret them as non-edge collapsing equivalence relations on $G$ and $H$ respectively.\\
        Now by the 3rd point of \lemref{lem0} we have some other equivalence relations $\threesim^1$ and $\threesim^2$ s.t.
            \[ (\mathcal{T}/{\sim^1})/{\approx^1}\cong \mathcal{T}/{\threesim^1} \quad (\mathcal{T}/{\sim^2})/{\approx^2}\cong \mathcal{T}/{\threesim^2}\]
        Now since $\approx^1$ and $\approx^2$ are maximal, so is $\threesim^1$ and $\threesim^2$, due to the proof of the third point of \lemref{lem0}. Thus we must have by the second point of \lemref{lem0} (using the obvious root):
            \[G/{\approx^1}\cong_R (\mathcal{T}/{\sim^1})/{\approx^1}\cong_R \mathcal{T}/{\threesim^1}\cong_R \mathcal{T}/{\threesim^2}\cong_R (\mathcal{T}/{\sim^2})/{\approx^2}\cong_R H/{\approx^2} \]
        Showing the other direction of the lemma.     
    \end{proof}
    \thmref{thm1} immediately gives  us:
    \begin{cor}\label{cor1}
        For any two non-redundant rooted graphs $ (G,S),(H,R)\in \Gamma$ we have:
        \[ \mathcal{T}(G,S)\cong_R \mathcal{T}(H,R)\]
        if and only if:
        \[(G,S)\cong_R (H,R)\] 
    \end{cor}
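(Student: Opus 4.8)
The plan is to derive Corollary \ref{cor1} directly from Theorem \ref{thm1} together with Lemma \ref{lem1.5}, using the defining property of non-redundant graphs. The ``if'' direction is immediate: if $(G,S)\cong_R(H,R)$, then the induced map on walks is a rooted isomorphism $\mathcal{T}(G,S)\cong_R\mathcal{T}(H,R)$ (one can also invoke Lemma \ref{lem1} with the trivial equivalence relation, or simply note that isomorphic rooted graphs are $\simeq$-equivalent and apply Theorem \ref{thm1}). So the content is entirely in the converse.

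For the converse, suppose $\mathcal{T}(G,S)\cong_R\mathcal{T}(H,R)$. By Theorem \ref{thm1} this gives $(G,S)\simeq(H,R)$, and then Lemma \ref{lem1.5} supplies non-edge-collapsing equivalence relations $\sim_G$ on $G$ and $\sim_H$ on $H$ with
\[
(G/{\sim_G},[S]_{\sim_G})\cong_R(H/{\sim_H},[R]_{\sim_H}).
\]
Now I would use the hypothesis that $G$ and $H$ are non-redundant. By definition a non-redundant graph admits no non-trivial non-edge-collapsing equivalence relation, so $\sim_G$ and $\sim_H$ are both trivial. Hence the quotient maps $G\to G/{\sim_G}$ and $H\to H/{\sim_H}$ are rooted isomorphisms (sending $S\mapsto[S]_{\sim_G}$ and $R\mapsto[R]_{\sim_H}$), and composing with the displayed isomorphism yields $(G,S)\cong_R(H,R)$.

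The only point that needs a moment of care is the claim that a trivial graph equivalence relation $\sim$ yields $G\cong_R G/{\sim}$ respecting roots; this is routine from Definition \ref{def4}, since $V/{\sim_V}=\{\{v\}: v\in V\}$ and $E/{\sim_E}=\{\{e\}: e\in E\}$ with the origin and terminus maps transported verbatim, so $v\mapsto\{v\}$, $e\mapsto\{e\}$ is the desired isomorphism and it sends $S$ to $[S]_\sim=\{S\}$. I expect no genuine obstacle here: the corollary is essentially a bookkeeping consequence of Lemma \ref{lem1.5} once one unwinds what ``non-redundant'' means. If one wanted to avoid even invoking Lemma \ref{lem1.5}, an alternative route is to run the proof of Theorem \ref{thm1}: obtain a tree $\mathcal{T}$ with $\mathcal{T}(G,S)\cong\mathcal{T}\cong\mathcal{T}(H,R)$ and non-edge-collapsing $\sim^1,\sim^2$ on $\mathcal{T}$ with $G\cong_R\mathcal{T}/{\sim^1}$, $H\cong_R\mathcal{T}/{\sim^2}$; take maximal such relations, apply the second point of Lemma \ref{lem0} (whose vertex components agree by the remark following Lemma \ref{lem0}), and conclude $G\cong_R\mathcal{T}/{\sim^1}\cong_R\mathcal{T}/{\sim^2}\cong_R H$ using that non-redundancy forces these maximal quotients to equal $G$ and $H$ themselves. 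Either way the argument is short.
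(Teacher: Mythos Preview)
Your proof is correct and follows essentially the same approach as the paper: apply Theorem~\ref{thm1} to obtain $(G,S)\simeq(H,R)$, extract non-edge-collapsing relations $\sim_G,\sim_H$ with $(G/{\sim_G},[S]_{\sim_G})\cong_R(H/{\sim_H},[R]_{\sim_H})$, and use non-redundancy to force both relations to be trivial. The only minor difference is that you correctly cite Lemma~\ref{lem1.5} for the existence of these relations, whereas the paper (somewhat loosely) attributes this step to Definition~\ref{def5}; your citation is the more accurate one, since the passage from $\simeq$ to a single pair of quotients is precisely the content of Lemma~\ref{lem1.5}.
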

    \begin{proof}
        From \thmref{thm1} we see that for any such $ (G,S),(H,R)$ we must have: 
        \[(G,S)\simeq (H,R)\]
        i.e. by \defref{def5} we must have non-edge collapsing equivalence relations $\sim_G$, $\sim_H$ on $G$ and $H$ resp. s.t.:
         \[(G/{\sim_G},[S]_{\sim_G})\cong_R (H/{\sim_H},[R]_{\sim_H})\]
         and since $G$ and $H$ are non-redundant these relations must be trivial and thus:
         \[(G,S)\cong_R (H,R)\]
    \end{proof}
    So by looking at two graph we can see when they both produce the same tree. Combining this with \lemref{lem0} we can see that every rooted isomorphism class of rooted label regular trees has a unique non-redundant rooted graph associated to it. This allows us to classify each label regular tree by associating it with the unique non-redundant rooted graph that it is an unfolding tree of. 
    \section{Classification of Label-Regular Directed trees up to almost isomorphism}
    Now we can define when two trees are isomorphic up to a finite subtree i.e. are almost isomorphic. In order to do this we first define what it means to take away a subgraph. 
    \begin{defn}\label{def6}
        For any graph $G$ and any subgraph $H$ of $G$ we can define the graph $G\setminus H$ by setting:
        \begin{align*}
            E(G\setminus H):&=EG\setminus EH\\
            V(G\setminus H):&=VG\setminus \{v\in VG\mid E_o(v),E_t(v)\neq \emptyset,\ E_o(v)\cup E_t(v)\subseteq EH\}\\
            o_{G\setminus H}:&= o|_{E(G\setminus H)}\\
            t_{G\setminus H}:&= t|_{E(G\setminus H)}
        \end{align*}
    \end{defn}
    This allows us to define almost isomorphisms:
    \begin{defn}\label{def7}
        For any two trees $\mathcal{T}_1,\mathcal{T}_2$  we say that they are almost isomorphic if there exists finite subtrees $T_1,T_2$  s.t.~there exists an isomorphism:
        \[\Psi:\mathcal{T}_1\setminus T_1\to \mathcal{T}_2\setminus T_2\]
        We will write $\mathcal{T}_1\cong_A\mathcal{T}_1$ if that is the case. 
    \end{defn}
    Note that by expanding the subtrees $T_1$ and $T_2$ to ensure they contain a potential root. We also can ensure that for any vertex $v$ in $T_i$, we either have $N_o(v)\subseteq T_i$ or $N_o(v)\cap T_i=\emptyset$. We can describe a rooted tree without such a subtree  in the case of the tree being $\mathcal{T}(G,S)$, in a similar way as has been done in \cite{scott84} for regular trees:
    \begin{defn}\label{def8}
        For any rooted graph $G$ and any root $S$, a vertex induced subgraph $\mathcal{S}\leq\mathcal{T}(G,S)$ is:
        \begin{itemize}
            \item A \textbf{subspace} if $\forall w\in\mathcal{W}(G,S),\forall u\in \mathcal{W}(G,T(w))\ w\in \mathcal{S}\implies wu\in \mathcal{S}$
            \item \textbf{Inescapable} if $\forall w\in \mathcal{W}(G,S)\exists u\in \mathcal{W}(G,T(w)):\ wu\in \mathcal{S}$
            \item \textbf{Cofinite} if there is a finite set $F\subseteq V\mathcal{T}(G,S)$ s.t.
                \[\mathcal{S}=\bigcup_{x\in F}\mathcal{T}(G,S)_x\] 
        \end{itemize}
    \end{defn}
    Note that if we have $|\mathcal{W}(G,S)\setminus\mathcal{S}|<\infty$ the subspace $\mathcal{S}$ is cofinite. If $\mathcal{S}$ is inescapable the converse implication is also true.\\ 
    We can show that when talking about almost isomorphisms we can look at inescapable, cofinite subspaces instead of taking away any finite tree.
    \begin{lemma}\label{lem2}
        For any two finite rooted graphs $(G,S)$,$(G',S')$ if the trees $\mathcal{T}(G,S)$ and $\mathcal{T}(G',S')$ are almost isomorphic, then there are some inescapable, cofinite subspaces $\mathcal{S},\mathcal{S}'$ of $\mathcal{W}(G,S)$ and $\mathcal{W}(G',S')$ resp. that are isomorphic.
    \end{lemma}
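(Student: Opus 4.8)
The plan is to take the almost-isomorphism $\Psi : \mathcal{T}(G,S)\setminus T_1 \to \mathcal{T}(G',S')\setminus T_2$ given by \defref{def7} and massage the finite trees $T_1, T_2$ into complements of inescapable cofinite subspaces. First I would invoke the observation recorded just after \defref{def7}: by enlarging $T_1$ and $T_2$ we may assume each contains the respective root, and that each $T_i$ is "downward closed at the level of neighbourhoods" — for every vertex $v\in T_i$ either $N_o(v)\subseteq T_i$ or $N_o(v)\cap T_i=\emptyset$. The point of this normalization is that once a vertex leaves $T_i$, its entire outgoing neighbourhood (and hence, by induction, its whole descendant cone) stays outside $T_i$; so $\mathcal{S} := \mathcal{T}(G,S)\setminus T_1$, viewed as a vertex-induced subgraph of $\mathcal{T}(G,S)$, is automatically a \textbf{subspace} in the sense of \defref{def8}. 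The same holds for $\mathcal{S}' := \mathcal{T}(G',S')\setminus T_2$.

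Next I would check the remaining two properties. Since $T_1$ is a finite tree, $V\mathcal{T}(G,S)\setminus V\mathcal{S}$ is finite, so by the remark after \defref{def8} it suffices to show $\mathcal{S}$ is inescapable; but inescapability is immediate because every vertex $w\in\mathcal{W}(G,S)$ has descendants at every depth (the graph $G$ being star-connected with root $S$ and every vertex having an outgoing edge along some walk — more carefully, one uses that $\mathcal{T}(G,S)$ is an infinite tree with no leaves reachable, which follows from $S$ being a root so that from $T(w)$ there is always a continuing walk) and only finitely many of them can lie in $T_1$; hence some $wu\in\mathcal{S}$. Cofiniteness of $\mathcal{S}$ then follows from the remark, and we get $\mathcal{S}=\bigcup_{x\in F}\mathcal{T}(G,S)_x$ for the finite set $F$ of minimal vertices of $\mathcal{S}$. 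The analogous argument handles $\mathcal{S}'$.

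Finally, the restriction of $\Psi$ to $\mathcal{S}$ is already an isomorphism onto $\mathcal{S}'$, since enlarging $T_1,T_2$ was done so that $\Psi$ still restricts to an isomorphism of the complements — concretely, if the new trees are $T_1\cup \Psi^{-1}(\Delta)$ and $T_2\cup\Delta$ for suitable finite $\Delta$, then $\Psi$ maps $\mathcal{T}(G,S)\setminus(T_1\cup\Psi^{-1}(\Delta))$ isomorphically onto $\mathcal{T}(G',S')\setminus(T_2\cup\Delta)$. So the two normalized complements are isomorphic inescapable cofinite subspaces, as required.

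The step I expect to be the main obstacle is the normalization in the first paragraph: one must argue carefully that enlarging the finite subtrees $T_1$ and $T_2$ — first to contain the roots, then to satisfy the neighbourhood-saturation condition — can be done \emph{compatibly}, i.e. so that $\Psi$ continues to restrict to an isomorphism between the enlarged complements and so that both enlargements remain finite. The subtlety is that saturating $T_1$ might force, via $\Psi$, the addition of vertices on the $G'$ side that then need to be saturated there, potentially triggering further additions back on the $G$ side; one needs to see this process terminates. It does, because a single saturation step only adds vertices that are children of vertices already in the (finite) tree and $\Psi$ is a bijection on the complements, so the "back-and-forth" closure of a finite set under $\Psi$, $\Psi^{-1}$ and "add all siblings of an added vertex" stabilizes after finitely many rounds — but making this precise is the one genuinely nontrivial bookkeeping point, and is where I would spend the most care.
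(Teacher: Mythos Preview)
Your approach differs from the paper's and the gap is precisely where you anticipate it. The paper does \emph{not} attempt a symmetric back-and-forth normalization of $T_1$ and $T_2$. Instead it works asymmetrically: set $m':=\max\{|w|:w\in VT_2\}$, let $T'_0$ be the finite set of vertices of $\mathcal{T}(G',S')\setminus T_2$ of length $\le m'$, and put
\[
m:=\max\{|w|:w\in VT_1\cup\Psi^{-1}[VT'_0]\}.
\]
Then $\mathcal{S}:=\{w:|w|>m\}\cup\{\text{leaves}\}$ is manifestly an inescapable cofinite subspace of $\mathcal{T}(G,S)$, and one simply \emph{defines} $\mathcal{S}':=\Psi[\mathcal{S}]$ and verifies directly that $\mathcal{S}'$ is again an inescapable cofinite subspace. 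The extra term $\Psi^{-1}[VT'_0]$ in the definition of $m$ is the whole trick: it forces every non-leaf $w\in\mathcal{S}$ to satisfy $|\Psi(w)|>m'$, so that $\Psi(w)$ and all of its children already lie outside $T_2$; this is exactly what is needed to push the subspace property across $\Psi$ in one step, with no iteration.

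Your termination sketch does not close the gap. Saturation on one side indeed only adds vertices at depths already represented, but the moment you transport the newly added vertices across via $\Psi$ you lose all control over their depth on the other side---an almost-isomorphism need not respect depth---and saturating there may produce vertices whose $\Psi$-preimages sit at yet further depths on the first side. The assertion that the closure ``stabilizes after finitely many rounds'' needs an invariant bounding the process, and none is given. There is a second hidden cost: to keep each $T_i$ a genuine connected (hence prefix-closed) subtree---which you implicitly use when you say that once a vertex leaves $T_i$ its whole descendant cone stays outside---adjoining $\Psi^{-1}(\Delta)$ may force you to add entire ancestor chains as well, further complicating any bound. The paper's depth-threshold device sidesteps all of this bookkeeping: one well-chosen $m$ makes what would be your back-and-forth terminate in a single round.
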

    \begin{proof}
        We will denote the length of a word $w$ by $|w|$. Since $\mathcal{T}(G,S)$ and $\mathcal{T}(G',S')$ are almost isomorphic we have some finite subtrees $T\leq \mathcal{T}(G,S)$, $T'\leq \mathcal{T}(G',S')$ and an isomorphism:
        \[\phi: \mathcal{T}(G,S)\setminus T\to \mathcal{T}(G',S')\setminus T'\]
        Now set $m':=\max\{|w|\mid w\in VT'\}$ and define the vertex induced subgraph $T'_0$ by setting:
        \[VT'_0:=\{w\in\mathcal{T}(G,S)\setminus T'\mid |w|\leq m'\} \]
        Now we can set $m:=\max\{|w|\mid w\in VT\cup \phi^{-1}[VT'_0]\}$ and define $\mathcal{S}$ to be the vertex induced subgraph of $\mathcal{T}(G,S)$ with:
        \[ V\mathcal{S}=\{w\in\mathcal{T}(G,S)\mid |w|>m\}\cup\{u\in\mathcal{T}(G,S)\mid N_o(u)=\emptyset\}\]
        We first note that $\mathcal{S}$ is indeed an inescapable subspace since we can expand any word to a word with length $>m$, or a leaf (vertex with empty outgoing neighbourhood). To see that it is cofinite we simply note that:
        \[\mathcal{W}(G,S)\setminus V\mathcal{S}\subseteq \{w\in\mathcal{T}(G,S)\mid |w|\leq m\}\]
        and since the left hand set is finite the subspace is cofinite.\\
        We now observe that since by definition we must have for any leaf $u$: $u\in V(\mathcal{T}(G,S)\setminus T)$ and we must have $\{w\in\mathcal{T}(G,S)\mid |w|>m\}\subseteq V(\mathcal{T}(G,S)\setminus T)$ as $V(\mathcal{T}(G,S))\setminus VT\subseteq V(\mathcal{T}(G,S)\setminus T)$. This gives us $\mathcal{S}\subseteq \mathcal{T}(G,S)\setminus T$, so when we define $\mathcal{S}':=\phi[\mathcal{S}]$, we can define the isomorphism:
        \[\psi: \mathcal{S}\to \mathcal{S}' \]
        by defining $\psi:=\phi|_{\mathcal{S}}$. Now we only have to show that $\mathcal{S}'$ is a cofinite inescapable subspace.\\
        To show that it is a subspace we will show that for any $w'\in \mathcal{S}'$ and any\\
        $e'\in E_o(T(w'))$, we have $w'e'\in \mathcal{S}'$; this will show that $\mathcal{S}'$ is a subspace by induction. To show this proposition note that since  $w'\in \mathcal{S}'$ and $E_o(w)\neq\emptyset$ we must have some $w\in \mathcal{S}$ s.t.~$w'=\phi(w)$ and $E_o(w)\neq \emptyset$, so $|w|>m$ giving us $w'=\phi(w)\notin T'_0$ and thus $|w'|>m'$. Thus we must also have $|w'e'|>m'$ and thus $w'e'\in \mathcal{T}(G',S')\setminus T'$. So $w'$ and $w'e'$ are in the domain of $\phi$. Setting $w:=\phi^{-1}(w')\in\mathcal{S}$ and since $\phi^{-1}(w'e')=we$ for some $e\in EG$, we must have $\phi^{-1}(w'e')\in \mathcal{S}$ and thus $w'e'\in \mathcal{S}'$.\\
        To show that the subspace is inescapable we take some $w'\in \mathcal{W}(G',S')$, then we must have some $u'\in \mathcal{W}(G')$ s.t.~$w'u'\in \mathcal{T}(G',S')\setminus T' $ and thus we can set $w=\phi^{-1}(w'u')$ then since $\mathcal{S}$ is inescapable there must be some $u\in\mathcal{W}(G)$ s.t.~$wu\in \mathcal{S}$. Furthermore since $\phi$ is a homomorphism there must be some $u''$ s.t.~$\phi(wu)=w'u'u''\in \mathcal{S}'$.\\
        Finally for cofiniteness we will note that for any $w'\notin V\mathcal{S}'$ we must either have $w'\notin V(\mathcal{T}(G',S')\setminus T')$ or $\phi^{-1}(w')\notin V\mathcal{S}$. Since both of these options are only true for finitely many $w'$, $\mathcal{S}'$ is cofinite. 
    \end{proof}
    So to better describe the label regular trees that are almost isomorphic we will have to take a look at subspaces. For this we will call for any two words $v,w\in \mathcal{W}(G,S)$ $v$ a prefix of $w$ if there is some other word $u$ in $EG^*$, s.t.~$w=vu$, we will write $v\leq_p w$. Note that this order makes $\mathcal{W}(G,S)$ into a semi-lattice. Now we can introduce the basis of a subspace:
    \begin{lemma}\label{lem3}
         For any rooted graph $G$ and some root $S$, for any subspace $\mathcal{S}\leq \mathcal{T}(G,S)$ there exists a unique set $B\subseteq \mathcal{S}$, s.t.~no two distinct elements of $B$ are prefixes of each other and any word in $\mathcal{S}$ has a prefix in $B$. It is the case that:
         \begin{itemize}
              \item The subspace $\mathcal{S}$ is inescapable iff for each $v\in \mathcal{W}(G,S)$ we have some $b\in B$ s.t.~$v\leq_p b$ or $b\leq_p v$.
              \item The subspace $\mathcal{S}$ is cofinite iff $|B|< \infty$.
          \end{itemize}
          We will call the set $B$ the basis of $\mathcal{S}$.
          Conversely for any set $B$ s.t.~no two distinct elements of $B$ are prefixes of each other there exists a unique subspace $\mathcal{S}_B$, s.t.~$B$ is the basis of $\mathcal{S}$.
    \end{lemma}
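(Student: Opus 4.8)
The plan is to construct $B$ explicitly as the set of $\leq_p$-minimal elements of $\mathcal{S}$, verify it has the stated properties, and handle the converse by defining $\mathcal{S}_B$ to be the subspace generated upward by $B$.

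First I would define $B := \{w \in \mathcal{S} \mid \text{no proper prefix of } w \text{ lies in } \mathcal{S}\}$. Since prefixes of a fixed word $w$ form a finite chain (of length $|w|+1$), every $w \in \mathcal{S}$ has a $\leq_p$-minimal prefix still in $\mathcal{S}$ — namely, descend through prefixes until you can descend no further — and that minimal prefix is by definition in $B$. This gives the "every word in $\mathcal{S}$ has a prefix in $B$" property. No two distinct elements of $B$ are prefixes of each other: if $b_1 <_p b_2$ with both in $B$, then $b_1$ is a proper prefix of $b_2$ lying in $\mathcal{S}$, contradicting $b_2 \in B$. For uniqueness of $B$: suppose $B'$ also satisfies the two defining conditions. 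Given $b \in B$, it has a prefix $b' \in B'$; but $b'$ has a prefix $b'' \in B$, and since $b'' \leq_p b' \leq_p b$ with $b'', b \in B$ antichain, all three coincide, so $b = b' \in B'$; symmetrically $B' \subseteq B$.

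Next I would prove the two bullet characterizations. For inescapability: if $\mathcal{S}$ is inescapable, given $v \in \mathcal{W}(G,S)$ there is $u$ with $vu \in \mathcal{S}$, hence $vu$ has a prefix $b \in B$; this prefix $b$ is comparable to $v$ (both are prefixes of $vu$, and prefixes of a common word are totally ordered), so $v \leq_p b$ or $b \leq_p v$. Conversely, if every $v$ is comparable to some $b \in B$, then in the case $v \leq_p b$ we get $b \in \mathcal{S}$ extending $v$, and in the case $b \leq_p v$ we use that $\mathcal{S}$ is a subspace and $b \in \mathcal{S} \subseteq B$, so $v = b w \in \mathcal{S}$; either way $v$ extends into $\mathcal{S}$, but here one must be careful that the comparable $b$ actually witnesses inescapability — the case $b \leq_p v$ directly gives $v \in \mathcal{S}$, so inescapability holds trivially. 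For cofiniteness: if $|B| < \infty$, then $\mathcal{W}(G,S) \setminus \mathcal{S}$ consists of words with no prefix in $B$; since $G$ is finite each $b \in B$ has only finitely many proper prefixes, and a word not in $\mathcal{S}$ must be a proper prefix of... — actually the cleaner route is: a word $w \notin \mathcal{S}$ has no prefix in $B$, and I claim such $w$ is a proper prefix of some element of $B$ or incomparable to all of $B$; combined with inescapability-free reasoning this needs the standard König-type argument. The honest statement: $\mathcal{W}(G,S)\setminus\mathcal{S} \subseteq \{w \mid w <_p b \text{ for some } b \in B\} \cup \{w \mid w \text{ incomparable to all } b\in B\}$, and the complement being cofinite forces $B$ finite and conversely — here I expect to invoke that the tree is locally finite (degrees bounded since $G$ is finite) together with the subspace property so that $\mathcal{W}(G,S)\setminus\mathcal{S}$ is exactly the set of proper prefixes of elements of $B$ once $\mathcal{S}$ is inescapable, and handle the non-inescapable case separately. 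The main obstacle is getting the cofiniteness equivalence clean without circular dependence on inescapability: I would phrase it as "$\mathcal{W}(G,S)\setminus\mathcal{S}$ is the down-closure (under $\leq_p$) of an antichain-indexed family minus the antichain," observe that local finiteness makes a finite antichain have finite down-closure, and for the converse use that an infinite antichain in a locally finite tree forces infinitely many nodes below it (by König's lemma, an infinite locally finite tree with infinitely many leaves-of-$B$ has infinitely many internal nodes).

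Finally, for the converse direction: given an antichain $B$ (no element a prefix of another), define $\mathcal{S}_B$ as the vertex-induced subgraph on $\{w \in \mathcal{W}(G,S) \mid \exists b \in B,\ b \leq_p w\}$. This is a subspace: if $w \in \mathcal{S}_B$ via $b \leq_p w$ and $wu \in \mathcal{W}(G,S)$, then $b \leq_p w \leq_p wu$. Its basis is $B$: every element of $\mathcal{S}_B$ has a prefix in $B$ by construction, $B$ is an antichain by hypothesis, and each $b \in B$ lies in $\mathcal{S}_B$ (taking $b \leq_p b$) with no proper prefix in $\mathcal{S}_B$ — for if $b' <_p b$ had $b' \in \mathcal{S}_B$, some $b'' \in B$ with $b'' \leq_p b' <_p b$ would violate the antichain property. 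Uniqueness of $\mathcal{S}_B$ follows since any subspace with basis $B$ must contain every extension of every $b \in B$ (subspace property) and cannot contain any word lacking a $B$-prefix (or that word would need a prefix in the basis $B$). This closes the lemma.
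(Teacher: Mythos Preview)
Your treatment of the existence and uniqueness of $B$, the inescapability characterization, and the converse construction of $\mathcal{S}_B$ is correct and matches the paper's approach (the paper is terser, invoking only that $\leq_p$ makes $\mathcal{W}(G,S)$ a semi-lattice, but the content is the same).

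The genuine gap is in the cofiniteness bullet: you are working with the wrong definition. In the paper, \emph{cofinite} does not mean $|\mathcal{W}(G,S)\setminus\mathcal{S}|<\infty$; Definition~\ref{def8} says $\mathcal{S}$ is cofinite iff there is a \emph{finite} set $F$ with $\mathcal{S}=\bigcup_{x\in F}\mathcal{T}(G,S)_x$. The paper explicitly remarks just after that definition that these two notions coincide only when $\mathcal{S}$ is inescapable. With the correct definition the argument is a one-liner using the identity
\[
\mathcal{S}=\bigsqcup_{b\in B}\mathcal{T}(G,S)_b:
\]
if $B$ is finite, take $F=B$; conversely, given a finite witnessing $F\subseteq\mathcal{S}$, every $b\in B$ has some $x\in F$ with $x\leq_p b$, and minimality of $b$ in $\mathcal{S}$ forces $b=x$, so $B\subseteq F$ is finite. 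Your König-type detour is unnecessary, and the intermediate claim you are circling around (``$|B|<\infty$ implies the complement is finite'') is actually false for non-inescapable subspaces --- take $B$ a singleton sitting on one branch of a tree with another infinite branch entirely outside $\mathcal{S}$. Once you use the paper's definition, the ``circular dependence on inescapability'' you worry about disappears.
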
 
    \begin{proof}
        The existence and uniqueness of a basis follows directly from the fact that the partial order forms a semi-lattice.\\
        To show the first point we first assume that $\mathcal{S}$ is inescapable then for any $w\in\mathcal{W}(G,S)$ we have some $v\in\mathcal{S}$ s.t.~$w\leq_p v$. Now for $v$ we have in turn some $b_v\in B$ with $b_v\leq_p v$. So $w$ and $b_v$ are both prefixes of $v$ thus they must be $\leq_p$ comparable.\\
        For the converse let $B$ be the basis of $\mathcal{S}$ s.t.~any word in $\mathcal{W}(G,S)$ is comparable to some word in $B$. Then for any word $v$ we have some comparable $b\in B$, if $b\leq_p v$ then $v\in \mathcal{S}$ by definition of a subspace, on the other hand if $v\leq_p b $ then $v$ is a prefix of some word in $\mathcal{S}$.\\
        For the second point we note that for any subspace $\mathcal{S}$ with basis $B$ we have:
        \[\mathcal{S}=\{bw\in\mathcal{T}(G,S)\mid b\in B\land w\in (EG)^*\}=\bigsqcup_{b\in B}\mathcal{T}(G,S)_{b}\]
        so if the basis is finite the subspace is also cofinite.\\
        Furthermore if $\mathcal{S}$ is cofinite take $F\subseteq \mathcal{S}$ finite s.t.
            \[\mathcal{S}=\bigcup_{x\in F}\mathcal{T}(G,S)_x\] 
        thus for any $v\in \mathcal{S}$ we have some $x\in F$ s.t.~$x\leq_p v$. Therefore we can observe that $B\subseteq F$, since for each $b\in B$ we have some $x\in F\subseteq \mathcal{S}$ with $x\leq_p b$ and thus $b=x$. Thus since $F$ is finite, $B$ must also be finite.\\
        For the second part we first note that for any two subspaces $\mathcal{S}_1$ and $\mathcal{S}_2$ that have the same basis $B$ we must have:
        \[\mathcal{S}_1=\{bw\in\mathcal{T}(G,S)\mid b\in B\land w\in \mathcal{W}(G,T(b))\}=\mathcal{S}_2\]
        giving us uniqueness.\\
        Now for any set $B$ s.t.~no two distinct elements of $B$ are prefixes of each other we can define:
        \[\mathcal{S}_B:=\{bw\in\mathcal{T}(G,S)\mid b\in B\land w\in \mathcal{W}(G,T(b))\}\]
        this is clearly a subspace of $\mathcal{T}(G,S)$ and has $B$ as basis.
    \end{proof}
    Now we can also see that for any almost isomorphic trees we can find an almost isomorphism that is an isomorphism of two inescapable, cofinite subspaces, that also maps the basis of one subspace to the basis of the other. This follows from \lemref{lem2} since any isomorphism between subspaces will map basis element to a basis element since they are the only elements with empty incoming neighbourhood (in $\mathcal{S}$).\\
    Furthermore we can show that for any half graph, its unfolding tree is isomorphic to a half tree of the unfolding tree of the whole graph.
    \begin{lemma}\label{lem4}
        For any rooted graph $(G,S)$ any $x\in VG$ and any word $w\in\mathcal{W}(G,S)$ s.t.~$T(w)=x$ we have:
        \[\mathcal{T}(G_x,x)\cong_R \mathcal{T}(G,S)_{w}\]
    \end{lemma}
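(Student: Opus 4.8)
The plan is to build an explicit rooted isomorphism $\Phi:\mathcal{T}(G_x,x)\to\mathcal{T}(G,S)_w$ given by left-concatenation with the fixed walk $w$, namely $\Phi(u)=wu$ on vertices (walks from $x$) and $\Phi((u,ue))=(wu,wue)$ on edges.

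The one step that needs genuine care is the set-theoretic identity $\mathcal{W}(G_x,x)=\mathcal{W}(G,x)$: every walk of $G$ that starts at $x$ is already a walk of the outgoing graph $G_x$. One inclusion is trivial since $G_x$ is a subgraph of $G$. For the other, if $p=e_1\cdots e_n\in\mathcal{W}(G)$ with $O(p)=x$, then each vertex visited by $p$ is reachable from $x$ along a prefix of $p$ and hence lies in $V_x$; therefore each $e_i$ has both endpoints in $V_x$ and so $e_i\in E_x$, giving $p\in\mathcal{W}(G_x,x)$ (the empty walk $\varepsilon_x$ lies in both sets by definition). I would then record, by unwinding the outgoing-graph construction inside the tree, that $V\bigl(\mathcal{T}(G,S)_w\bigr)=\{\,wu\mid u\in\mathcal{W}(G,x)\,\}$ and $E\bigl(\mathcal{T}(G,S)_w\bigr)=\{\,(wu,wue)\mid u,ue\in\mathcal{W}(G,x),\ e\in EG\,\}$: a $\mathcal{T}(G,S)$-walk starting at $w$ is exactly a sequence of one-edge extensions $w\mapsto we_1\mapsto we_1e_2\mapsto\cdots$, so the vertices at or below $w$ are precisely the words $wu$ with $u$ a walk in $G$ from $T(w)=x$.

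With these descriptions in hand, the rest is bookkeeping. Well-definedness of $\Phi$ uses $\mathcal{W}(G_x,x)=\mathcal{W}(G,x)$ and $E_x\subseteq EG$ together with the fact that $wu\in\mathcal{W}(G,S)$ whenever $w\in\mathcal{W}(G,S)$ with $T(w)=x$ and $u\in\mathcal{W}(G,x)$. It is a graph homomorphism because $o_{\mathcal{T}}$ and $t_{\mathcal{T}}$ on both trees are just "the prefix" and "the full word":
\[
o_{\mathcal{T}}\bigl(\Phi((u,ue))\bigr)=o_{\mathcal{T}}\bigl((wu,wue)\bigr)=wu=\Phi\bigl(o_{\mathcal{T}}((u,ue))\bigr),
\]
and likewise for $t_{\mathcal{T}}$. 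It is bijective on vertices and on edges: surjectivity is exactly the description of $V\bigl(\mathcal{T}(G,S)_w\bigr)$ and $E\bigl(\mathcal{T}(G,S)_w\bigr)$ above, and injectivity follows from left-cancellativity of concatenation ($wu=wu'\Rightarrow u=u'$, and then $wue=wue'\Rightarrow e=e'$). Finally $\Phi(\varepsilon_x)=w$, and $w$ is the natural root of $\mathcal{T}(G,S)_w$ while $\varepsilon_x$ is the root of $\mathcal{T}(G_x,x)$, so $\Phi$ is a rooted isomorphism.

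I do not expect a serious obstacle: the proof is essentially "strip the common prefix $w$". The only places to be careful are the inclusion $\mathcal{W}(G,x)\subseteq\mathcal{W}(G_x,x)$ (which is where the definition of $G_x$ as the graph of vertices/edges accessible from $x$ is actually used) and the explicit handling of empty walks, so that the base point $\varepsilon_x$ is correctly matched with $w$ via the convention $w\varepsilon_x=w$.
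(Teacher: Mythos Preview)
Your proposal is correct and follows essentially the same route as the paper: define the map by left-concatenation $\Phi(u)=wu$ (and $\Phi((u,ue))=(wu,wue)$ on edges), check it is a well-defined graph homomorphism, bijective on vertices and edges, and sends $\varepsilon_x$ to $w$. Your write-up is in fact more careful than the paper's, making explicit the identity $\mathcal{W}(G_x,x)=\mathcal{W}(G,x)$ and the treatment of the empty walk, whereas the paper simply asserts that ``any path in $G$ that has origin $x$ must be contained in $G_x$'' and proceeds.
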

    \begin{proof}
        Firstly we have to note that any path in $G$ that has origin $x$ must be contained in $G_x$.\\
        We will define a graph homomorphism $\phi:\mathcal{T}(G_x,x)\to\mathcal{T}(G,S)_{w}$ by defining it on vertices as follows:
        \[\forall u\in \mathcal{W}(G_x,x),\ \phi(u):=wu\]
        we note that since any vertex in $\mathcal{T}(G,S)_{w}$ is of the form $wv$, where $v$ is a path with $O(v)=T(w)=x$ and thus $v\in \mathcal{W}(G_x,x)$. Conversely any word of such form is in $\mathcal{T}(G,S)_{w}$. This shows that $\phi$ is well-defined and a bijection on the vertices. \\
        Additionally we see that we have $(u_1,u_2)\in E\mathcal{T}(G,S)$ iff $(\phi(u_1),\phi(u_2))\in E\mathcal{T}(G,S)$ since for any $u\in \mathcal{W}(G_x,x)$ and any $e\in EG$ with $T(u)=o(e)$ we have $\phi(ue)=wue=\phi(u)e$, the converse follows from the bijectivity of $\phi$. So we can define $\phi$ on edges by setting:
        \[  \forall u\in \mathcal{W}(G_x,x)\forall e\in E_o(T(u)),\ \phi((u,ue))=(\phi(u),\phi(ue))\]
        This clearly makes $\phi$ into a tree homomorphism and since it is a bijection on edges and vertices it is a tree isomorphism. The fact that $\phi(\epsilon_x)=w$ shows us that $\phi$ is rooted.
    \end{proof}
    If we combine the above lemma with the fact that a cofinite inescapable subspace is a finite disjoint union of half trees s.t.~each path eventually lands in one of these trees, we can extrapolate that:
    \begin{lemma}\label{lem5}
        Let $(G,S),(H,R)$ be two rooted graphs, s.t.~the unfolding trees $\mathcal{T}(G,S),\mathcal{T}(H,R)$ are almost isomorphic then there exists subsets $M\subseteq VG$ and $M'\subseteq VH$ s.t.~there exists bases of cofinite inescapable spaces $B$, $B'$ in $\mathcal{T}(G,S),\mathcal{T}(H,R)$ resp. s.t.~$M=T[B]$ and $M'=T[B']$ and we have:
            \[\forall m\in M,\exists m'\in M',\ \mathcal{T}(G_m,m)\cong_R \mathcal{T}(H_{m'},m')\]
        If both $G$ and $H$ are non-redundant we furthermore have a bijection $\pi:M\to M'$ s.t.
            \[\forall m\in M G_m\cong_R H_{\pi(m)}\]
    \end{lemma}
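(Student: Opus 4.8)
The plan is to turn the almost isomorphism into an honest isomorphism between cofinite inescapable subspaces that preserves bases, cut those subspaces into half trees, and identify each half tree with an outgoing graph via \lemref{lem4}. So first I would apply \lemref{lem2} to obtain cofinite inescapable subspaces $\mathcal{S}\leq\mathcal{T}(G,S)$ and $\mathcal{S}'\leq\mathcal{T}(H,R)$ together with an isomorphism $\psi:\mathcal{S}\to\mathcal{S}'$, and by the discussion following \lemref{lem3} I may take $\psi$ so that it carries the basis $B$ of $\mathcal{S}$ bijectively onto the basis $B'$ of $\mathcal{S}'$. Cofiniteness together with \lemref{lem3} makes $B,B'$ finite and gives $\mathcal{S}=\bigsqcup_{b\in B}\mathcal{T}(G,S)_b$ (similarly for $\mathcal{S}'$); I then set $M:=T[B]\subseteq VG$ and $M':=T[B']\subseteq VH$.

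Next I would observe that for $b\in B$ the half tree $\mathcal{T}(G,S)_b$ is precisely the set of descendants of $b$, all of which lie in $\mathcal{S}$ since $\mathcal{S}$ is a subspace; because $\psi$ is a graph isomorphism sending $b$ to $\psi(b)\in B'$, because descendancy is preserved by graph isomorphisms, and because the descendants of $\psi(b)$ inside $\mathcal{S}'$ are again all of $\mathcal{T}(H,R)_{\psi(b)}$ (as $\mathcal{S}'$ is a subspace), the restriction of $\psi$ is a rooted isomorphism $\mathcal{T}(G,S)_b\cong_R\mathcal{T}(H,R)_{\psi(b)}$. Then, given $m\in M$, I pick $b\in B$ with $T(b)=m$ and set $m':=T(\psi(b))\in M'$; combining \lemref{lem4} (which supplies $\mathcal{T}(G_m,m)\cong_R\mathcal{T}(G,S)_b$ and $\mathcal{T}(H,R)_{\psi(b)}\cong_R\mathcal{T}(H_{m'},m')$) with this restriction yields $\mathcal{T}(G_m,m)\cong_R\mathcal{T}(H_{m'},m')$, which is the first assertion.

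For the refined statement, assume $G$ and $H$ non-redundant. The crucial auxiliary fact I would establish is that the outgoing graph $G_x$ is itself non-redundant: any non-trivial non-edge-collapsing equivalence relation on $G_x$ extends to one on $G$ by leaving everything outside $G_x$ related only to itself, and this extension is still non-edge-collapsing because $V_x$ is closed under outgoing edges, so for related $v,v'\in V_x$ the neighbourhoods $E_o(v),E_o(v')$ already sit inside $G_x$, where the defining condition is known to hold; it is obviously still non-trivial, contradicting non-redundancy of $G$. I would also record that $\sim_o$ is trivial on any non-redundant graph, since by the first point of \lemref{lem0} it is the vertex component of a non-edge-collapsing equivalence relation, which must be trivial. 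Granting these, every $G_m$ and every $H_{m'}$ is non-redundant, so \coref{cor1} promotes the rooted-tree isomorphisms above to rooted-graph isomorphisms. To produce the bijection I define $\pi:M\to M'$ by $\pi(T(b)):=T(\psi(b))$; the only point to check is well-definedness, and if $T(b_1)=T(b_2)$ then \lemref{lem4} and \coref{cor1} give $H_{T(\psi(b_1))}\cong_R H_{T(\psi(b_2))}$, i.e.\ $T(\psi(b_1))\sim_o T(\psi(b_2))$ in $H$, whence $T(\psi(b_1))=T(\psi(b_2))$ by triviality of $\sim_o$. Running the same argument with $\psi^{-1}$ gives a two-sided inverse, so $\pi$ is a bijection $M\to M'$, and $G_m\cong_R H_{\pi(m)}$ by construction.

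I expect the heart of the argument — and the only place needing a genuinely new idea rather than assembly of earlier results — to be the auxiliary fact that outgoing subgraphs inherit non-redundancy (with the companion remark that $\sim_o$ is trivial on a non-redundant graph). The remainder is careful bookkeeping with \lemref{lem2}, \lemref{lem3}, \lemref{lem4} and \coref{cor1}, together with the elementary check that a graph isomorphism between subspaces sends descendant-subtrees \emph{onto} (not merely into) descendant-subtrees, which is exactly where the subspace property of $\mathcal{S}'$ is used.
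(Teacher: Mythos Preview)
Your proposal is correct and follows essentially the same approach as the paper: obtain isomorphic cofinite inescapable subspaces via \lemref{lem2}, use the basis bijection and \lemref{lem4} to match half-trees with outgoing-graph unfolding trees, and in the non-redundant case prove the auxiliary fact that each $G_x$ inherits non-redundancy (by trivially extending a relation) in order to invoke \coref{cor1} and the triviality of $\sim_o$ for well-definedness of $\pi$. The paper packages the well-definedness/injectivity step as the single equivalence chain $T(b_1)=T(b_2)\iff T(\Phi(b_1))=T(\Phi(b_2))$ citing \lemref{lem0} part~1 and \lemref{lem4}, but this is exactly what you unwind explicitly.
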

    \begin{proof}
        Since $\mathcal{T}(G,S),\mathcal{T}(H,R)$ are almost isomorphic by \lemref{lem2} we have two cofinite inescapable subspaces $\mathcal{S}\subseteq\mathcal{T}(G,S)$ and $\mathcal{S}'\subseteq\mathcal{T}(H,R)$ and an isomorphism $\Phi: \mathcal{S}\to \mathcal{S}'$. Now let $B$ and $B'$ be the bases of $\mathcal{S}$ and $\mathcal{S}'$ resp., we note that $\Phi[B]=B'$. Note that we have:
        \[\mathcal{S}=\bigsqcup_{b\in B} \mathcal{T}(G,S)_b\qquad \mathcal{S}'=\bigsqcup_{b'\in B'} \mathcal{T}(H,R)_{b'}\]
        with the union being disjoint. Furthermore since the isomorphism must respect the ordering $\leq_p$ we must have for each $b\in B$:
        \begin{multline*}
        \Phi(\mathcal{T}(G,S)_b)=\Phi(\{ w\in\mathcal{T}(G,S)\mid b\leq_p w \})=\\
         \{ w'\in\mathcal{T}(H,R)\mid \Phi(b)\leq_p w' \}=\Phi(\mathcal{T}(H,R)_{\Phi(b)})
        \end{multline*}
        So $\mathcal{T}(G,S)_b\cong_R\mathcal{T}(H,R)_{\Phi(b)}$ via $\Phi|_{\mathcal{T}(G,S)_b}$.\\
        So by setting $m=T(b)\in VG$ and $m'=T(\Phi(b))\in VH$ we have by \lemref{lem4}:
        \[ \mathcal{T}(G_m,m)\cong_R\mathcal{T}(G,S)_{b}\cong_R \mathcal{T}(H,R)_{\Phi(b)}\cong_R \mathcal{T}(H_{m'},m')\]
        So by setting $M:=T[B]$ and $M':=T[B']$ we get the first part of the lemma since for any $m\in M$ we have some $b_m\in B$ s.t.~$m=T(b_m)$ and then $m'=T(\Phi(b_m))\in M'$ is the element we search.\\
        For the second part of the lemma assume that $G$ and $H$ are both non-redundant. Firstly we note that then for each $x\in VG$ $G_x$ is also non-redundant, since if we have some non edge collapsing equivalence relation on $G_x$, we can trivially expand it onto all of $G$ and it remains non edge collapsing since for each $y\in VG_x$: $E_o(y)\subseteq G_x$. So any such relation has to be trivial as $G$ is non-redundant. Analogously for each $x\in VH$ $H_x$ is also non-redundant. So if we have $M:=T[B]$ and $M':=T[B']$ as before we first any two $b_1,b_2\in B$ we have $T(b_1)=T(b_2)$ if and only if $T(\Phi(b_1))=T(\Phi(b_2))$ since because both $G$ and $H$ are non-redundant (by \lemref{lem0} part 1 and \lemref{lem4}):
        \begin{align*}
            T(b_1)=T(b_2)&\iff \mathcal{T}(G,S)_{b_1}\cong_R\mathcal{T}(G,S)_{b_2}\\
                         &\iff \mathcal{T}(H,R)_{\Phi(b_1)}\cong_R\mathcal{T}(G,S)_{\Phi(b_2)} \iff T(\Phi(b_1))=T(\Phi(b_2))    
        \end{align*}
        So the function $\pi:M\to M'$, defined by $\pi(m)=T(\Phi(b_m))$ where $b_m\in B$ is some element with $T(b_m)=m$, is well-defined and injective. It is surjective as for each $m'\in M$ there is some $b'_{m'}\in B'$ with $T(b'_{m'})= m'$ and since $\Phi[B]=B'$ we have some $b\in B$ s.t.~$\Phi(b)=b'_{m'}$ and thus $\pi(T(b))=T(b'_{m'})=m'$.\\
        For any $m\in M$ we can see by the definition of $\pi$ and by our previous consideration:
        \[\mathcal{T}(G_m,m)\cong_R \mathcal{T}(H_{\pi(m)},\pi(m))\]
        and since both $G_m$ and $H_{\pi(m)}$ is non-redundant by \coref{cor1} we must have:
        \[G_m\cong_R H_{\pi(m)}\]
    \end{proof}
    Now we can identify a family of graphs such that if two graphs of this family have almost isomorphic unfolding trees based on some root, then the graphs are isomorphic:
    \begin{defn}\label{def9}
        Let $G$ be a graph that has a root we will say that it is \textbf{robustly rooted} if for each $v\in VG$ with $G_v=G$ we have some $w\in N_o(v)$ s.t.~$G_w=G$.\\
        We will call a (unrooted) graph $G$ \textbf{robust} if either each inclusion maximal half-graph $G_v$ (i.e.~$G_v$ s.t.~$\forall w\in VG,\ G_v\subseteq G_w\implies G_v=G_w$) is robustly rooted or consists of just one vertex and no edges edges.
    \end{defn}
    Note that a graph is robustly rooted iff it has a root $v$ with $E_t(v)\neq \emptyset$.
    \begin{theorem}\label{thm2}
        For any finite non redundant robustly rooted graphs $G,H$ and any roots $S_G,S_H$ of $G,H$ resp. then  if $\mathcal{T}(G,S_G)$ is almost isomorphic to $\mathcal{T}(H,S_H)$ then $G\cong H$
    \end{theorem}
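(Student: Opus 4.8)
The plan is to exploit \lemref{lem5}, which already gives us — since $G$ and $H$ are non-redundant — subsets $M\subseteq VG$, $M'\subseteq VH$, bases $B,B'$ of cofinite inescapable subspaces of $\mathcal{T}(G,S_G),\mathcal{T}(H,S_H)$ with $M=T[B]$, $M'=T[B']$, and a bijection $\pi:M\to M'$ such that $G_m\cong_R H_{\pi(m)}$ for every $m\in M$. So the heart of the matter is to upgrade this ``half-graph by half-graph'' agreement into a global isomorphism $G\cong H$ (not rooted). First I would reduce to the case where $G$ itself occurs among the $G_m$: I claim that since $\mathcal{S}$ is inescapable, for \emph{every} vertex $v\in VG$ there is some $m\in M$ with $v\in VG_m$, hence $G_v\subseteq G_m$; in particular taking $v=S_G$ (a root, so $G_{S_G}=G$) forces $G=G_m$ for some $m\in M$, and likewise $H=H_{m''}$ for some $m''\in M'$. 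This is where robust-rootedness enters: I want to argue that the $m\in M$ for which $G_m=G$ can be chosen so that $\pi(m)$ is a vertex with $H_{\pi(m)}=H$, and then $G=G_m\cong_R H_{\pi(m)}=H$ as rooted graphs at those vertices, giving in particular $G\cong H$ as graphs.

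The careful step is the previous sentence. Here is how I would make it work. Let $m\in M$ with $G_m=G$. Then $H_{\pi(m)}\cong_R G_m=G$, so $H_{\pi(m)}$ is a copy of $G$; but we also know $H$ is star-connected with root $S_H$, so $H=H_{S_H}$, and every $H_x$ is a subgraph of $H$. The issue is that $H_{\pi(m)}$ might be a \emph{proper} subgraph of $H$ isomorphic to $G$. To rule this out I would use robustness of $H$ together with finiteness: among all vertices $x\in VH$ with $H_x\cong G$, pick one with $H_x$ inclusion-maximal. Using the remark after \defref{def9} (a graph is robustly rooted iff it has a root $v$ with $E_t(v)\neq\emptyset$) and the definition of \emph{robust}, the inclusion-maximal half-graphs of $H$ are robustly rooted; I would show the maximal copy of $G$ sits inside such a maximal half-graph and, by comparing outgoing structure vertex-by-vertex (robust-rootedness lets us always step ``back toward a root'' while staying in the full graph), that the maximal copy of $G$ must be all of $H_y$ for an inclusion-maximal $H_y$. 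Combined with the symmetric statement for $G$ — $G=G_m$ is itself such a maximal half-graph, being the whole graph — and the fact that $|VG|=|VH|$ is forced because $G\cong_R$ (maximal half-graph of $H$) which has the same size as $H$ only if that half-graph is all of $H$, I get $H_y=H$ and hence $G\cong H$.

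The main obstacle I expect is exactly this last maximality/size-counting argument: showing that a subgraph of $H$ isomorphic to the \emph{rooted} graph $G$ is forced to be the whole of $H$. The naive worry is a ``nested'' situation where $H$ properly contains a copy of $G$ which properly contains a copy of $G$, etc.; finiteness kills infinite descent, but one still must check that robust-rootedness of $H$ (equivalently, the root of $H$ has nonempty incoming neighbourhood) genuinely obstructs $G$ embedding as a proper maximal half-subgraph. I would handle this by noting that if $H_y\cong_R G$ with $H_y$ inclusion-maximal, then $H_y=H_{S_H}=H$ would follow if $y$ could be connected by a walk in $H$ to $S_H$ with that walk lying inside $H_y$ — which robust-rootedness arranges, since from any vertex of $H$ with the full-graph property one can march along incoming edges staying inside whichever maximal half-graph one is in. Making ``march along incoming edges'' precise, and confirming it returns to (a copy of) the root rather than cycling, is the technical crux; acyclicity is unavailable (graphs here may have cycles), so I would instead use the finiteness of $VH$ and the fact that $G_m=G$ is \emph{literally} the ambient graph on the $G$-side to transport the conclusion back.
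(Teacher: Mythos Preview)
Your skeleton matches the paper's: invoke \lemref{lem5} to get $M=T[B]$, $M'=T[B']$ and a bijection $\pi$ with $G_m\cong_R H_{\pi(m)}$; find a root of $G$ inside $M$ and a root of $H$ inside $M'$; then deduce $G\cong H$. But two steps are not done correctly.

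\textbf{Step 1 (a root lies in $M$) has a real gap.} You claim that inescapability of $\mathcal{S}$ alone gives ``for every $v\in VG$ there is $m\in M$ with $v\in VG_m$''. It does not. Inescapability only says every walk $w$ from $S_G$ is $\leq_p$-comparable to some $b\in B$; when $w<_p b$ you learn that $T(b)$ is reachable from $v=T(w)$, not the reverse. Concretely, take $G$ with vertices $S,a$, edges $S\to a$ and a self-loop at $a$; then $B=\{(S\to a)\}$ is the basis of a cofinite inescapable subspace, $M=\{a\}$, yet $S\notin VG_a$. Of course that graph is not robustly rooted --- and that is exactly the point: robust-rootedness is needed \emph{here}, not later. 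The paper isolates this as a separate claim and proves it by induction on $s_B=\sum_{b\in B}|b|$: one finds a maximal-length $b_m=we\in B$, replaces $\{we'\mid e'\in E_o(T(w))\}$ by $\{w\}$ to get a smaller basis $B'$, and by induction $T[B']$ contains a root $R$. If $R\in T[B]$ one is done; if $R=T(w)$, robust-rootedness supplies a root $R'\in N_o(T(w))\subseteq T[B]$. Your ``inescapability'' shortcut skips the only place where the hypothesis bites.

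\textbf{Step 3 (passing from $G\cong_R H_{\pi(m)}$ to $G\cong H$) is the wrong battlefield.} You try to argue directly, via maximal half-graphs and ``marching along incoming edges'', that a copy of $G$ inside $H$ must be all of $H$; you correctly flag this as the crux and don't resolve it. The paper avoids all of this with a two-line cardinality squeeze: having roots $R_G\in M$ and $R_H\in M'$ (from Step~1 applied to both sides), one has $G=G_{R_G}\cong H_{\pi(R_G)}$ and $G_{\pi^{-1}(R_H)}\cong H_{R_H}=H$, hence
\[
|VG|=|VH_{\pi(R_G)}|\le |VH|=|VG_{\pi^{-1}(R_H)}|\le |VG|,
\]
forcing $|VH_{\pi(R_G)}|=|VH|$ and therefore $H_{\pi(R_G)}=H$ (vertex-induced subgraph on all vertices). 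So $G\cong H$. No robustness or marching is needed here; all the robustness was spent in Step~1.
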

    \begin{proof}
        In order to show the theorem using \lemref{lem4} we will first show:
        \begin{clm}\label{clm1}
            For any finite robustly rooted graph $G$ and any root $S\in G$, we have for any basis $B\subseteq \mathcal{T}(G,S)$ of a cofinite inescapable subspace, some root $R$ of $G$ s.t.~$R\in T[B]$.
        \end{clm}
        \begin{proof}
             We prove this by induction over $s_{B}:=\sum_{b\in B}|b|$. We also define $m_B:=\max\{|b|\mid b\in B\}$.\\
            For the induction beginning we note that if $s_B=0$ then $m_B=0$ we must have $B=\{\epsilon_{S_G}\}$ and thus $M=\{S\}$, so $M$ contains $S_G$ which is a root.\\
            For the induction step, we will assume that $s_B>0$ and thus $m_B>0$, thus since $B$ is a basis we must have $\epsilon_{S}\notin B$. Let $b_m\in B$ be a path s.t.~$|b_m|=m_b$ since $\epsilon_{S_G}\notin B$, we must have $|b_m|>0$ and thus we have a path $w\in \mathcal{W}(G,S)$ and an edge $e\in EG$ with $o(e)=T(w)$ s.t.~$b_m=we$. Now for any other $e'\in EG$ s.t.~$o(e')=T(w)$ since $B$ is a basis $we'$ either has a proper prefix in $B$, is a proper prefix of some word in $B$, or is in $B$. If $we'$ has a proper prefix $b'<_p we'$ in $B$, then we must also have $b'\leq_p w$ and thus $b'<_p we$, but this contradicts the fact that $B$ is a basis. If $we'$ is a proper prefix of some $b'\in B$ then we must have $|b_m|=|we|=|we'|<|b'|$ but since $|b_m|$ is maximal in $\{|b|\mid b\in B\}$ this is a contradiction. Thus we must have $\forall e'\in E_o(T(w)),\ we'\in B$. So now if we define:
            \[B':=B\setminus\{we'\mid e'\in E_o(T(w))\}\cup\{w\}\]
            note that $s_{B'}<s_B$ and $m_{B'}\leq m_B$. To show that this is a basis take two elements $b'_1,b'_2\in B'$ s.t.~$b'_1\leq_p b'_2$. If we have $b'_1,b'_2\in B$ then since $B$ is a basis we must have $b'_1=b'_2$. If $b'_1=w$, since $|b'_1|\leq|b'_2|$ and $|b'_2|\leq m_{B'}\leq m_B=|w|+1=|b'_1|+1$ we must have $b'_2=w=b'_1$ or $b'_2=we$, for some $e\in E_o(T(w))$. However since $we\notin B$ for all $e\in E_o(T(w))$, we must have $b'_2=b'_1$. If $b'_2=w$ then we also have $b'_1\leq_p we$, so if $b'_1\in B$ we must have $b'_1=we\notin B'$, which is a contradiction and thus we must have $b'_1\notin B$ giving us $b'_1=w=b'_2$. Thus $B'$ is a basis. To show that $B'$ is a basis of a cofinite space  note that $B'$ is of course finite since $B$ is. Furthermore to show inescapability  we note that any vertex in $\mathcal{T}(G,S)$ is $\leq_p$- comparable with some element in $B'$ or with some element of the form $we'$, however if it is comparable with some $we'$ it is also comparable with $w\in B'$. So $B'$ is a basis of an inescapable cofinite space with $s_{B'}< s_B$ so by induction we must have for some root $R$ of $G$ s.t.:
            \[R\in T[B']=T[B]\setminus N_o(T(w))\cup \{T(w)\} \]
            If $R\in T[B]$ we are done, otherwise $R=T(w)$ but then since $G$ is robustly rooted we must have some other root $R'\in N_o(T(w))\subset T[B]$. Thus showing that $T[B]$ contains a root.\\ 
        \end{proof}
        Now take $G,H$ to be finite robustly rooted non redundant with roots $S_G,S_H$ s.t.~the trees $\mathcal{T}(G,S_G)$ and $\mathcal{T}(H,S_H)$ are almost isomorphic. Thus using \lemref{lem5}, we have some bases $B_G,B_H$ cofinite inescapable subspaces and a bijection $\pi:M_G\to M_H$, where $M_G=T[B_G]$ and $M_H=T[B_H]$, s.t.~for each $m\in M_G$ $G_m\cong H_{\pi(m)}$. However by the above claim we have some roots $R_G\in M_G$ and $R_H\in M_H$ we have $G_{R_G}=G\cong G_{\pi(R_G)}$ and $G_{\pi^{-1}(R_H)}\cong H=H_{R_H}$. And since isomorphic graphs have the same amount of vertices and any subgraph has less or equally many vertices than the graph it is contained in we have:
        \[|VG|=|VH_{\pi(R_H)}|\leq |VH|= |VG_{\pi^{-1}(R_H)}|\leq |VG|\]
        Thus we have $|VH_{\pi(R_H)}|=|VH|$, but since $H_{\pi(R_H)}$ is a vertex induced subgraph of $H$ we must thus have $H_{\pi(R_H)}=H$ and so $G\cong H$. 
    \end{proof}
    Note that the converse of the above theorem is not necessarily true since sometimes for two distinct roots $S\neq R$ of a graph $G$, the trees $\mathcal{T}(G,S)$ and $\mathcal{T}(G,R)$ are not almost isomorphic , see section 5. We will show later how to identify when two roots produce almost isomorphic unfolding trees.\\
    In order to get a better understanding of when two not necessarily robustly rooted graphs produce almost isomorphic unfolding trees we introduce the \textbf{spider product} of several graphs:
    \begin{defn}\label{def11}
        For any graphs $G_1,G_2,\dots,G_n$, any $k_1,k_2,\dots k_n\in \mathds{Z}_{>0}$ and any functions $\rho_1,\rho_2,\dots,\rho_n$:
        \[\forall 1\leq i\leq n,\ \rho_i:\{1,\dots,k_i\}\to VG_i\]
        We can define the spider product $(G,S):=\llangle(G_1,k_1,\rho_1),\dots,(G_n,k_n,\rho_n) \rrangle$ as follows:
        \begin{align*}
            VG:&=\{S\}\sqcup \bigsqcup_{1\leq i\leq n} VG_i\\
            EG:&=\{(S,\rho_i(j),j)\mid 1\leq i\leq n,1\leq j \leq k_i\}\sqcup \bigsqcup_{1\leq i\leq n} EG_i\\
            \forall 1\leq i\leq n,& \forall e\in EG_i,\ (o_G(e):= o_{G_i}(e))\land (t_G(e):= t_{G_i}(e))\\
            \forall 1\leq i\leq n,&\forall 1\leq j \leq k_i,\ (o((S,\rho_i(j),j)):= S)\land (t((S,\rho_i(j),j)):= \rho_i(j))  
        \end{align*}
    \end{defn}
    \begin{figure}
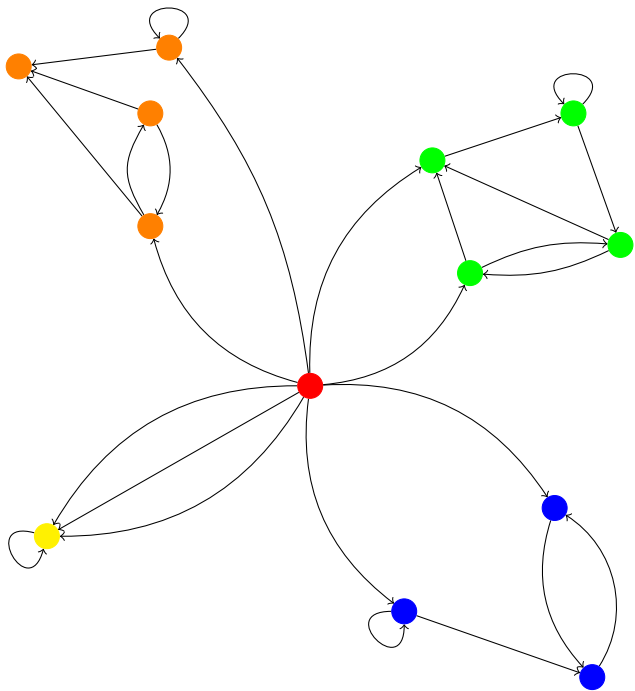

    \ctikzfig{spider_product}
    \caption{An example of a spider product}
    \end{figure}
    In order for a spider product $\llangle(G_1,k_1,\rho_1),\dots,(G_n,k_n,\rho_n) \rrangle$  to be a rooted graph with root $S$ we must have for each $i\in\{1,\dots,n\}$:
    \[\bigcup_{j=1}^{k_i} (G_i)_{\rho(j)}=G_i\]
    We will assume this from now on unless stated otherwise.\\

    We adopt the convention that if $k_i=0$ for some $1\leq i\leq n$ we simply drop that component of the spider product i.e.
    \begin{multline*}
        \llangle(G_1,\rho_1,k_1),\dots, (G_i,\rho_i,0),\dots(G_n,\rho_n,k_n)\rrangle:=\\
        \llangle(G_1,\rho_1,k_1),\dots, (G_{i-1},\rho_{i-1},k_{i-1}),(G_{i+1},\rho_{i+1},k_{i+1}),\dots(G_n,\rho_n,k_n)\rrangle
    \end{multline*}
    
    Note that permuting the components of the spider product does not change its structure. Note as well that we can take the union of the components and not change the structure, i.e.:
    \begin{multline*}
        \llangle(G_1,\rho_1,k_1),\dots, (G_i,\rho_i,k_i), (G_{i+1},\rho_{i+1},k_{i+1})\dots(G_n,\rho_n,k_n)\rrangle:=\\
        \llangle(G_1,\rho_1,k_1),\dots, (G_{i}\sqcup G_{i+1},\rho_{i}\oplus\rho_{i+1} ,k_i+k_{i+1}),\dots(G_n,\rho_n,k_n)\rrangle
    \end{multline*}
    Here for any $\gamma:\{1,\dots,k\}\to X$ and $\delta:\{1,\dots,l\}\to Y$ the function $\gamma\oplus \delta:\{1,\dots,k+l\}\to X\cup Y$ is defined by setting:
    \[\gamma\oplus\delta(i):=\begin{cases}
                                \gamma(i),& \text{if } 1\leq i\leq k \\
                               \delta(i-k),& \text{if } k<i\leq k+l
                            \end{cases}\]
    So when we look at a spider product we can assume that the components are connected.\\
    Our first observation about spider products is that factoring a spider product over a non-edge collapsing equivalence relation produces another spider product.
    \begin{lemma}\label{lem9}
        For any spider product $(G,S)=\llangle(G_1,k_1,\rho_1),\dots,(G_n,k_n,\rho_n) \rrangle$ and any non edge collapsing equivalence relation $\sim$, s.t. $[S]_{\sim}=\{S\}$ there exists another spider product $(H,R)$ s.t.:
            \[(G/{\sim},[S]_{\sim})\cong_R(H,R)\]
    \end{lemma}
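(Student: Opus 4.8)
The plan is to construct the new spider product explicitly from the data of $(G,S)$ and $\sim$, and then produce the required rooted isomorphism directly. Since $[S]_\sim=\{S\}$, the root vertex is not identified with anything, so $[S]_\sim$ is again a sensible root of $G/{\sim}$. First I would restrict the relation $\sim$ to each component: for each $i\in\{1,\dots,n\}$, let $\sim^i$ be the restriction of $\sim$ (on vertices and edges) to $G_i\subseteq G$. I claim this is a graph equivalence relation on $G_i$ that is again non-edge-collapsing: the compatibility with $o,t$ is inherited, and non-edge-collapsing is inherited because for $v\sim^i v'$ in $VG_i$ the unique $e'\in E_o(v')$ with $e\sim e'$ (which exists in $G$) must in fact lie in $EG_i$, since $E_o(v')$ taken in $G$ equals $E_o(v')$ taken in $G_i$ — no spine edge $(S,\rho_j(\ell),\ell)$ has origin $v'\ne S$. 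Hence $G_i/{\sim^i}$ is defined, and it is star-connected with the needed covering property, so it can serve as a component of a new spider product.

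Next I would handle the spine edges. The edges with origin $S$ are exactly the $(S,\rho_i(j),j)$; two of them, $(S,\rho_i(j),j)$ and $(S,\rho_{i'}(j'),j')$, can be $\sim$-equivalent only if their termini are $\sim$-equivalent, i.e.\ $\rho_i(j)\sim\rho_{i'}(j')$, which (since the $VG_i$ are disjoint in $G$ and an equivalence class cannot meet two different components — here I need that $\sim$ does not identify vertices across components, which follows from the fact that any chain of edge-identifications tracking back toward $S$ stays within one component once we are off the spine, combined with $[S]_\sim=\{S\}$) forces $i=i'$. So the spine edges partition by component, and within component $i$ the equivalence classes of spine edges biject with a subset of $VG_i/{\sim^i}$; collecting these gives, for each $i$, a new multiplicity $k_i'$ and a new function $\rho_i':\{1,\dots,k_i'\}\to VG_i/{\sim^i}$ listing the $\sim^i$-classes of the $\rho_i(j)$ with appropriate multiplicities (one per $\sim$-class of spine edges landing in $G_i$). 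Then I would set
\[
(H,R):=\llangle (G_1/{\sim^1},k_1',\rho_1'),\dots,(G_n/{\sim^n},k_n',\rho_n')\rrangle.
\]

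Finally I would check $(G/{\sim},[S]_\sim)\cong_R (H,R)$ by writing down the map: send the class $[S]_\sim$ to the root $R$ of $H$, send a class $[v]_\sim$ with $v\in VG_i$ to the corresponding class $[v]_{\sim^i}\in VG_i/{\sim^i}$, send a non-spine edge class $[e]_\sim$ with $e\in EG_i$ to $[e]_{\sim^i}$, and send a spine edge class $[(S,\rho_i(j),j)]_\sim$ to the spine edge of $H$ that was chosen to represent it. Well-definedness and bijectivity on vertices follow from the disjoint-union decomposition $VG=\{S\}\sqcup\bigsqcup VG_i$ being respected by $\sim$ (the key input being $[S]_\sim=\{S\}$ and no cross-component identification); on edges it follows from the analogous decomposition of $EG$ and the bijection between spine-edge classes and the chosen representatives built into the definition of $k_i',\rho_i'$. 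Compatibility with $o$ and $t$ is then a routine case check, separating spine edges (both endpoints: $S\mapsto R$, and $\rho_i(j)\mapsto\rho_i'(\cdot)$, matching the definition of the spine of $H$) from component edges (where it reduces to the already-known quotient map $G_i\to G_i/{\sim^i}$ being a homomorphism).

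The main obstacle I expect is the claim that $\sim$ induces no identifications across distinct components and, relatedly, that each $\sim^i$ is genuinely non-edge-collapsing — i.e.\ the bookkeeping that the unique partner edge guaranteed by the non-edge-collapsing property of $\sim$ in $G$ actually stays inside $G_i$. This hinges on the structural fact that the only edges with origin $S$ are spine edges and that $[S]_\sim=\{S\}$, so that once a vertex $v\ne S$ lies in $VG_i$, its entire outgoing edge set in $G$ coincides with its outgoing edge set in $G_i$; I would isolate this as a short preliminary observation before doing anything else. Everything after that is the kind of disjoint-union verification that, while tedious, carries no real risk.
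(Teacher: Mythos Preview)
Your core gap is the claim that $\sim$ cannot identify vertices (or edges) across different components $G_i$. This is false: take $G_1$ and $G_2$ each to be a single vertex with one loop, with $k_1=k_2=1$, and let $\sim$ identify the two non-root vertices and their loops while leaving $S$ and the two spine edges in singleton classes. This is a genuine non-edge-collapsing graph equivalence relation with $[S]_\sim=\{S\}$, and $G/{\sim}$ has two vertices; but your construction returns the original three-vertex $(G,S)$, since each restricted relation $\sim^i$ is trivial. The handwave about ``chains of edge-identifications tracking back toward $S$'' does not work: the definition of a graph equivalence relation only requires that $\sim$-related edges have $\sim$-related endpoints, not the converse, so vertex identifications need not propagate along any chain of edges back to $S$.

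Two smaller points. First, your worry that distinct spine edges might be $\sim$-related is unfounded: applying the non-edge-collapsing condition with $v=v'=S$ shows that each $e\in E_o(S)$ is $\sim$-related only to itself among edges with origin $S$, so $k_i'=k_i$ automatically. Second, your observation that for $v\in VG_i$ one has $E_o(v)$ in $G$ equal to $E_o(v)$ in $G_i$ is correct and does show that $\sim^i$ is non-edge-collapsing on $G_i$ \emph{provided} $\sim$ never relates $VG_i$ to anything outside $G_i$ --- but that proviso is exactly what fails.

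The paper sidesteps the whole issue by not splitting into components: it takes a \emph{single} (possibly disconnected) component $H_0:=(G\setminus\bar S)/{\sim}$, with $k_0=k_1+\cdots+k_n$ and $\rho_0$ the concatenation of the $\rho_i$ followed by the quotient map. Your argument is easily repaired the same way: replace the family $G_i/{\sim^i}$ by the one graph $\bigl(\bigsqcup_i G_i\bigr)/{\sim}$.
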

    \begin{proof}
        We will first note that for any vertex $v$ s.t.~$S\sim v$ we must have $v=S$.\\
        Thus if we define the subgraph $\bar{S}:=(\{S\}\cup N_o(S),E_o(S))$ we can restrict the equivalence relation $\sim$ to $G\setminus \bar{S}$ and define $H_0:=(G\setminus \bar{S})/{\sim}$.
        Take $k_0:=k_1+k_2+\dots+k_n$ and $\rho_0:\{1,\dots,k_0\}\to VH_0$ to be a function defined by:
        \[\forall 1\leq i\leq n, \forall 1\leq l\leq k_i,\, \rho_0(\sum_{1\leq j<i}k_j+l)=\rho_i(l)\]
        To show that $(G/{\sim},[S]_{\sim})\cong_R\llangle (H_0,k_0,\rho_0) \rrangle$ we will define the function:
        \[\forall x\in EG/{\sim}\sqcup VG/{\sim},\, \Phi(x):=   \begin{cases}
                                                                    R, &  x=[S]_{\sim}\\
                                                                    (R,\rho_0(\sum_{1\leq j<i}k_j+l),\sum_{1\leq j<i}k_j+l),& x=[(S,\rho_i(l),l)]\\
                                                                    x, & \text{else}
                                                                \end{cases}\]
        Since as we have noted that $\sim$ is trivial on $\bar{S}$ this is well-defined and an isomorphism.
    \end{proof}
    We will by abuse of notation call any spider product non-redundant if it does not admit a non-trivial equivalence relation as in the above lemma. This is not always the same as being generally non-redundant, but it means that any half graph not equal to the whole graph is non-redundant.\\
    We can show that non-isomorphic, non-redundant spider products spider products, have non-isomorphic unfolding trees. This is because if we have two non-redundant spider products $(G,S)$ and $(H,R)$ that have the same unfolding trees, then we have some non-edge-collapsing equivalence relations $\sim_G$ and $\sim_H$ s.t. $(G/{\sim_G},[S]_{\sim_G})\cong_R(H/{\sim_H},[R]_{\sim_H})$. However since they are non-redundant spider-products $\sim_G$ and $\sim_H$ have to be trivial when restricted to the vertices that are not the root. So they are either trivial or relates the root to one non-root vertex and is trivial on all other vertices. If they are both trivial then of course $(G,S)\cong_R(H,R)$. It is impossible for just one of the relations be non-trivial, since if we have $S\sim_G v$ for some $v\neq S$, we have $N_t([S]_{\sim_G})=N_t([v]_{\sim_G})\neq \emptyset$, since $N_t(v)\neq \emptyset$, as there exists a path from $S$ to $v$. However since $N_t(R)=\emptyset$ there cannot be an isomorphism between $H$ and $G/{\sim_G}$ mapping $R$ to $[S]_{\sim_G}$. This contradicts $(G/{\sim_G},[S]_{\sim_G})\cong_R(H/{\sim_H},[R]_{\sim_H})$.\\
    If both of the equivalence relations are non-trivial, let $\phi_{\sim}:G/{\sim_G}\to H/{\sim_H}$ be the rooted isomorphism we can define the $\phi$ to be the isomorphism from $G$ to $H$. We can define $\phi(S)=R$ and for any vertex $v\neq S$: $\phi(v)$ to be the unique vertex s.t. $[\phi(v)]_{\sim_H}=\phi([v]_{\sim_G})$. Analogously we can define the edge component of $\phi$. This gives us a rooted isomorphism.\\   
    Now we can show that each unfolding tree of a graph $G$  is almost isomorphic to the unfolding tree of a spider product of rooted graphs which are either robust or consist of just one vertex  with no edges.
    \begin{lemma}\label{lem7}
        For any rooted graph $(H,R)$ there exists $k_1,\dots,k_n\in\mathds{Z}_{>0}$,robust graphs $G_1,\dots,G_n$ and functions $\rho_1,\dots,\rho_n$ as in the definition of the spider product.  s.t.~for: 
        \[(G,S):=\llangle (G_1,\rho_1,k_1),\dots, (G_n,\rho_n,k_n)\rrangle\]
        the unfolding tree $\mathcal{T}(H,R)$ is almost isomorphic to $\mathcal{T}(G,S)$.   
    \end{lemma}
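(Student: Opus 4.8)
The plan is to build the required spider product directly, by cutting $\mathcal{T}(H,R)$ along a finite frontier of walks and reading off the half-graphs hanging below it, after checking that all of these half-graphs are robust. Call a vertex $m\in VH$ \emph{recurrent} if it is a sink or lies on a directed cycle, and \emph{transient} otherwise (these are mutually exclusive: a sink has no out-edge, a vertex on a cycle has one). Two preliminary facts would come first. \emph{(a)} There is no infinite walk of $H$ using only transient vertices, since a vertex repeated on such a walk would lie on a cycle; consequently any walk out of a transient vertex can be prolonged (a transient vertex is not a sink) and, as there are no cycles among transient vertices, within at most $|VH|$ steps it reaches a recurrent vertex. \emph{(b)} For every recurrent $m$, the half-graph $H_m$ (the subgraph on the vertices reachable from $m$, rooted at $m$) is robust: if $m$ is a sink, $H_m$ is the one-vertex graph with no edges, which is robust by definition; if $m$ lies on a cycle, then $(H_m)_m=H_m$, so $H_m$ is its own unique inclusion-maximal half-graph and $m$ is a root of it, while $\{v\in VH_m:(H_m)_v=H_m\}$ is exactly the strongly connected component $C$ of $m$, and every $v\in C$ has an out-edge back into $C$ (follow a closed walk through $v$ inside $C$), so $H_m$ is robustly rooted, hence robust.

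Next I would take $B\subseteq\mathcal{W}(H,R)$ to be the set of walks reaching a recurrent vertex for the first time: $w\in B$ iff $T(w)$ is recurrent while every proper prefix of $w$ ends at a transient vertex. By \emph{(a)} every $w\in B$ has length at most $|VH|$, so $B$ is finite; no member of $B$ is a prefix of another, by minimality; and by \emph{(a)} every walk from $R$ is $\leq_p$-comparable to a member of $B$ (take the first recurrent vertex it meets, or prolong it until it meets one). Hence by \lemref{lem3}, $B$ is the basis of a cofinite inescapable subspace $\mathcal{S}=\bigsqcup_{b\in B}\mathcal{T}(H,R)_b$ of $\mathcal{T}(H,R)$, and by \lemref{lem4} each piece satisfies $\mathcal{T}(H,R)_b\cong_R\mathcal{T}(H_{T(b)},T(b))$, with $H_{T(b)}$ robust by \emph{(b)}.

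Then I would assemble the spider product. Enumerating $B=\{b_1,\dots,b_N\}$, set $m_i:=T(b_i)$, $G_i:=H_{m_i}$, $k_i:=1$ and $\rho_i(1):=m_i$; the requirement $\bigcup_{j}(G_i)_{\rho_i(j)}=G_i$ needed for $\llangle(G_1,\rho_1,1),\dots,(G_N,\rho_N,1)\rrangle$ to be a rooted graph holds because $(H_{m_i})_{m_i}=H_{m_i}$. Call this rooted graph $(G,S)$; its root has exactly $N$ outgoing edges, one into each $G_i$. Since in a spider product nothing points at the root and no edge joins distinct components, \lemref{lem4} shows that the subtree of $\mathcal{T}(G,S)$ below the $i$-th such edge is $\cong_R\mathcal{T}((G_i)_{m_i},m_i)=\mathcal{T}(H_{m_i},m_i)$. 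Thus those $N$ one-edge walks form the basis of a cofinite inescapable subspace $\mathcal{S}'$ of $\mathcal{T}(G,S)$ with $\mathcal{S}'\cong\bigsqcup_i\mathcal{T}(H_{m_i},m_i)\cong\mathcal{S}$. Since a cofinite inescapable subspace of an unfolding tree is, up to finitely many vertices, the complement of a finite subtree (compare the remark following \defref{def7}), the isomorphism $\mathcal{S}\cong\mathcal{S}'$ upgrades to $\mathcal{T}(H,R)\cong_A\mathcal{T}(G,S)$, and every $G_i$ is robust, so $(G,S)$ is the required spider product.

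The main obstacle I anticipate is fact \emph{(b)}: the interaction between "recurrent" and passing to a half-graph. One must be certain that transient vertices truly cannot form a cycle (so that $B$ is finite and $\mathcal{S}$ inescapable), and that reachability inside $H_m$ coincides with reachability inside $H$ on the relevant vertices (so that $(H_m)_m=H_m$ and the strongly-connected-component description of $\{v:(H_m)_v=H_m\}$ are correct) — these are the places where the argument could quietly fail. A smaller but essential point is that the one-vertex graph with no edges is declared robust; that clause is exactly what lets the sink vertices $m_i$ be absorbed.
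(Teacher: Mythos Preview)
Your proof is correct and follows essentially the same approach as the paper: your set of \emph{recurrent} vertices is exactly the paper's $M=M_1\cup M_2$, your frontier $B$ is the paper's basis $B_M$, and the spider product you assemble is the paper's $\llangle (H_{T(w_1)},T(w_1),1),\dots,(H_{T(w_n)},T(w_n),1)\rrangle$. The only notable difference is that you verify robustness of $H_m$ more carefully via the strongly connected component (the paper's Claim~\ref{clm2} just records that $H_x$ is robustly rooted when $x$ lies on a cycle and leaves the passage from ``robustly rooted'' to ``robust'' implicit), so if anything your write-up is slightly more complete on that point.
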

    \begin{proof}
        In order to show the lemma we need to first establish for which vertices $x$ the graph $H_x$ is robustly rooted.
        \begin{clm}\label{clm2}
            For any vertex $x\in VH$ s.t.~there exists a non-empty path $p\in \mathcal{W}(H)$ with $O(p)=x=T(p)$ the half-graph $H_x$ is robustly rooted.
        \end{clm}
        \begin{proof}
            This follows immediately from the fact that in $H_x$, $x$ is a root with non-empty incoming neighbourhood (i.e. $N_t(v)\neq \emptyset$).
        \end{proof}
        Now define:
        \[M_1=\{x\in VH\mid \exists p\in \mathcal{W}(H),\ (p\neq \epsilon_x)\land (O(p)=x=T(p))\}\]
        and:
        \[M_2=\{x\in VH\mid E_o(x)=\emptyset\}\]
        and set $M:=M_1\cup M_2$. Now we can define:
        \[\mathcal{S}_M=\{w\in \mathcal{W}(H,R)\mid \exists u\leq_p w,\ T(u)\in M \}\]One can see easily that this is a subspace since for any $w\in \mathcal{S}_M$ and any $w\leq_p w'$ we have some $u\in \mathcal{W}(H,R)$ with $T(u)\in M$ s.t.~$u\leq_p w\leq_p w'$ and thus $u\leq_p w'$ giving us $u\in \mathcal{S}_M$.\\
        Now to show that $\mathcal{S}_M$ is inescapable take some $u\in \mathcal{W}(H,R)$ and look at the set: $V\mathcal{W}(H,R)_u=\{w\in\mathcal{W}(H,R)\mid u\leq_p w\}$.\\
        If there is some word $w\in V\mathcal{W}(H,R)_u$ with $T(w)\in M_2$, then $u\leq_p w\in \mathcal{S}_M$.\\
        If that is not the case we will construct a series of paths $w_1,w_2,\dots\in V\mathcal{W}(H,R)_u$ s.t.~$w_1\leq_p w_2\leq_p w_3\leq_p\dots$. We can define this series recursively by taking $w_1:=u$ and when we have already defined $w_i$ for some $i\in\mathds{N}$ then since $w_i\in V\mathcal{W}(H,R)_u$ we have by our previous assumption we have $T(w_i)\notin M_2$ and thus take some $e_i\in E_o(w_i)$ then if we define $w_{i+1}=w_ie_i$ we have $w_i\leq_p w_{i+1}$. This constructs our series, now take the vertices $T(w_1),T(w_2),\dots \in VG$ and since $G$ is a finite graph we must have some $j<k\in \mathds{N}$ with $T(w_j)=T(w_k)$, let $u_{jk}\in\mathcal{W}(G)$ be s.t.~$w_k=w_ju_{jk}$. Note that $u_{jk}$ is non empty and $O(u_{jk})=T(w_j)=T(w_k)=T(u_{jk})$, thus we must have $T(w_j)\in M_1$ and thus $u\leq_p w_j\in \mathcal{S}_M$.\\
        Now in order to show that $\mathcal{S}_M$ is cofinite we will note that the set:
        \[B_M:=\{w\in \mathcal{T}(H,R)\mid (T(w)\in M)\land (\forall u<_p w,\ T(u)\notin M)\}\]
        is the basis of the subspace. To show that it is finite we note that for any $u\in \mathcal{T}(H,R)$ with $m:=|u|>|VH|+1$ we can write $u=e_1e_2\dots e_m$ and note that the vertices $t(e_1),\dots,t(e_{m-1})\in VH$ cannot all be distinct vertices. Take thus $j<k< m$ s.t.~$t(e_j)=t(e_k)$ so by setting $w=e_1\dots e_k<_p u$ we see that $T(w)\in M_1$ since $e_{j+1}\dots e_k$ is a non-empty cycle starting  and ending in $T(w)$. So since $w<_p u$ we have $u\notin B_M$. Thus we must have $B_M\subseteq\{w\in\mathcal{T}(H,R)\mid |w|\leq |VH|\}$ which is a finite set. This gives us the cofiniteness of $\mathcal{S}_M$.\\
        Now let $w_1,w_2,\dots,w_n\in \mathcal{W}(H,R)$ be paths s.t.~$B_M=\{w_1,w_2,\dots,w_n\}$. Now we can define the functions $\rho_i:\{1\}\to H_{T(w_i)}$ for each $i\in \{1,\dots,n\}$ by setting $\rho_i(1)=T(w_i)$. Now we can set:
        \[(G,S):=\llangle (H_{T(w_1)},T(w_1),1),\dots (H_{T(w_n)},T(w_n),1)\rrangle\]
        and now we can define the subspace of $\mathcal{T}(G,S)$ to be:
        \[\mathcal{S}:=\{u\in\mathcal{T}(G,S)\mid \exists 1\leq i\leq n,\ T(u)\in VH_{T(w_i)}\}\]
        We note that:
        \[\mathcal{S}=\bigsqcup_{1\leq i\leq n} T(G,S)_{(S,T(w_i),1)}\]
        and thus it is a cofinite subspace and since each word in $\mathcal{W}(G,S)$ that has length more than $1$ is in $\mathcal{S}$.\\
        Now by \lemref{lem4} we have 
        \[\mathcal{S}=\bigsqcup_{1\leq i\leq n} \mathcal{T}(G,S)_{(S,T(w_i),1)}\cong \bigsqcup_{1\leq i\leq n} \mathcal{T}(G_{T(w_i)},T(w_i))\]
        but since $G_{T(w_i)}=H_{T(w_i)}$ for each $1\leq i\leq n$ we have:
        \begin{multline*}
            \bigsqcup_{1\leq i\leq n} \mathcal{T}(G_{T(w_i)},T(w_i))=\bigsqcup_{1\leq i\leq n}\mathcal{T}(H_{T(w_i)},T(w_i))=\\
            \bigsqcup_{w\in B_M} \mathcal{T}(H_{T(w)},T(w))\cong \bigsqcup_{w\in B_M} \mathcal{T}(H,R)_w=\mathcal{S}_M
        \end{multline*}
        Thus $\mathcal{S}\cong \mathcal{S}_M$ and therefore the trees $\mathcal{T}(H,R)$ and $\mathcal{T}(G,S)$ are almost isomorphic.
    \end{proof}
    Thanks to the above lemma if we want to look at almost automorphism classes of label regular trees we can just look at unfolding trees of spider products, where the components are robust . When looking at graphs without sinks, one can assume that none of the components of the product is one vertex with no edges. We can furthermore assume that these products are non-redundant by using \Lemref{lem9}.

    Especially we can assume that the components of the product are non-redundant. We will define for any graph $G$ the vertex subset:
    \[ VG^m:=\{v\in VG\mid G_v\text{ is inclusion maximal}\} \]
     To show that if we have two non-redundant spider products that produce almost isomorphic unfolding trees, then the components of the products are isomorphic, we will need a basic result about non-redundant graphs:
    \begin{lemma}\label{lem10}
        Let $G,H$ be two non-redundant graphs s.t.
        \begin{align*}
            \forall& v\in VG^m\exists v'\in VH,\ G_v\cong_R H_{v'}\\
            \forall& w'\in VH^m\exists w\in VG,\ H_{w'}\cong_R G_{w}\\
        \end{align*}
        Then we must have:
        \[G\cong H\]
    \end{lemma}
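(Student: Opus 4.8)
The plan is to upgrade the two one-sided existential hypotheses into a genuine bijection between the inclusion-maximal half-graphs of $G$ and of $H$, and then to assemble the resulting local isomorphisms into a global one. First note that, since $G$ is non-redundant, the non-edge-collapsing equivalence relation of \lemref{lem0} (first point) whose vertex component is $\sim_o$ must be trivial; hence $\sim_o$ is the identity on $VG$, i.e.\ distinct vertices have non-rooted-isomorphic half-graphs, and likewise for $H$. So for each $v\in VG^m$ the vertex $v'\in VH$ with $G_v\cong_R H_{v'}$ given by the hypothesis is unique; write $\alpha(v):=v'$, and dually let $\beta(w')$ be the unique $w\in VG$ with $H_{w'}\cong_R G_w$ for $w'\in VH^m$. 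I will use two routine facts throughout: (i) if $u\in V_v$ then $(G_v)_u=G_u$, since every ancestor of $u$ is an ancestor of $v$; and (ii) any rooted isomorphism $\phi\colon A\to B$ restricts, for each $u\in VA$, to a rooted isomorphism $A_u\to B_{\phi(u)}$. Also, since $G$ is finite every $G_u$ lies inside some inclusion-maximal $G_v$, so $VG=\bigcup_{v\in VG^m}V_v$ and $EG=\bigcup_{v\in VG^m}E_v$ (an edge whose terminus lies in $V_v$ also has its origin in $V_v$).

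The heart of the argument is to show $\alpha(VG^m)\subseteq VH^m$. Suppose $v\in VG^m$ but $v':=\alpha(v)\notin VH^m$; climbing the finite chain of half-graphs strictly above $H_{v'}$, choose $w'\in VH^m$ with $H_{v'}\subsetneq H_{w'}$, so $v'\in V_{w'}$, $v'\neq w'$, and by (i) $H_{v'}=(H_{w'})_{v'}$. Letting $\chi\colon H_{w'}\to G_{\beta(w')}$ be the rooted isomorphism from the hypothesis, facts (ii) and (i) yield a rooted isomorphism $H_{v'}\cong_R G_{\chi(v')}$ with $\chi(v')\neq\beta(w')$. Composing, $(G_v,v)\cong_R(G_{\chi(v')},\chi(v'))$, so $v\sim_o\chi(v')$, so $v=\chi(v')$ by non-redundancy of $G$; but $|VG_v|=|VH_{v'}|<|VH_{w'}|=|VG_{\beta(w')}|$ (proper inclusion of finite vertex sets, preserved by isomorphism), whence $G_v=G_{\chi(v')}=(G_{\beta(w')})_{\chi(v')}\subsetneq G_{\beta(w')}$, contradicting maximality of $G_v$. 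Symmetrically $\beta(VH^m)\subseteq VG^m$, and $\alpha,\beta$ are mutually inverse (e.g.\ $\beta(\alpha(v))$ is the unique $w$ with $H_{\alpha(v)}\cong_R G_w$, namely $v$); so $\alpha\colon VG^m\to VH^m$ is a bijection with $G_v\cong_R H_{\alpha(v)}$ for all $v$.

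Finally, to glue, fix rooted isomorphisms $\phi_v\colon G_v\to H_{\alpha(v)}$ and define $\phi_V\colon VG\to VH$ by $\phi_V(u):=\phi_v(u)$ for any $v\in VG^m$ with $u\in V_v$. This is well defined: if $u\in V_{v_1}\cap V_{v_2}$ then by (i),(ii) each $\phi_{v_i}$ restricts to a rooted isomorphism $G_u\to H_{\phi_{v_i}(u)}$, so $H_{\phi_{v_1}(u)}\cong_R H_{\phi_{v_2}(u)}$ and hence $\phi_{v_1}(u)=\phi_{v_2}(u)$ by non-redundancy of $H$; the same comparison gives injectivity of $\phi_V$, and surjectivity holds since every vertex of $H$ lies in some $\phi_v(V_v)$. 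For vertices $a,b\in VG$, choosing $v\in VG^m$ with $b\in V_v$ one sees that the $G$-edges from $a$ to $b$ all lie in $E_v$ and are carried bijectively by $\phi_v$ onto the $H_{\alpha(v)}$-edges from $\phi_V(a)$ to $\phi_V(b)$, which are exactly all $H$-edges between those two vertices; so these edge-sets have equal cardinality, and picking an arbitrary bijection between them for each pair $(a,b)$ and taking the union yields an edge map $\phi_E$ making $(\phi_V,\phi_E)\colon G\to H$ an isomorphism. The step I expect to be the real obstacle is showing $\alpha$ lands in $VH^m$, where the restriction-of-isomorphism bookkeeping must be combined with the cardinality comparison; note that in the gluing step the possible presence of parallel edges makes it pointless to try to choose the $\phi_v$ coherently, so one glues only the vertex map and matches edge multiplicities afterwards.
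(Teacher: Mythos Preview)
Your argument is sound apart from one slip in the final paragraph: the parenthetical ``an edge whose terminus lies in $V_v$ also has its origin in $V_v$'' is the wrong way round (consider $v\to b\leftarrow a$). What is true is that an edge with \emph{origin} in $V_v$ has its terminus there, so you should choose $v\in VG^m$ with $a\in V_v$ rather than $b\in V_v$; then any $a\to b$ edge forces $b\in V_v$ and your bijection via $\phi_v$ goes through, while if $b\notin V_v$ there are no such $G$-edges and (since $\phi_V(a)\in VH_{\alpha(v)}$ and $\phi_V$ is injective) no $\phi_V(a)\to\phi_V(b)$ edges in $H$ either.

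Your route is genuinely different from the paper's. The paper bypasses both the ``$\alpha$ lands in $VH^m$'' step and the gluing: it forms $F:=G\sqcup H$ and takes a non-edge-collapsing relation on $F$ with vertex part $\sim_o$ (first point of \lemref{lem0}). Non-redundancy of $G$ and of $H$ forces $\sim_o$ to be trivial within each summand, so every $v\in VG$ has at most one $\sim_o$-partner in $VH$; existence is obtained by choosing $u\in VG^m$ with $G_v\subseteq G_u$ and pushing $v$ through the rooted isomorphism $G_u\to H_{u'}$ supplied by the hypothesis. This gives the vertex bijection in one stroke, and the edge bijection comes for free from the non-edge-collapsing property of the chosen relation --- no separate maximality argument, no gluing, no parallel-edge bookkeeping. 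Your approach is more explicit and self-contained; the paper's is shorter but leans on the machinery of \lemref{lem0}.
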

    \begin{proof}

        To construct an isomorphism between $G$ and $H$ we will look at the graph $F:=G\sqcup H$ (assuming w.l.o.g that $G$ and $H$ are disjoint) and take $\sim$ to be a non edge collapsing equivalence relation with $\sim_V=\sim_o$. Since $G$ and $H$ are both non-redundant we must have:
        \begin{align*}
            \forall x_1,x_2\in VG\sqcup EG,\ (x_1\sim x_2)\iff (x_1=x_2)\\
            \forall y_1,y_2\in VG\sqcup EG,\ (y_1\sim y_2)\iff (y_1=y_2)
        \end{align*}
        so for each vertex $v$ in $G$ we have at most one $v'\in VH$ with $v\sim_o v'$ (otherwise we would violate the above statements via transitivity). To see that such a $v'$ actually exists we look at some $u\in VG^m$ s.t. $G_v\subseteq G_u$, since we have some $u'\in VH$ s.t.~a rooted isomorphism $\phi:G_u\to H_{u'}$ exists. Thus we have some $v'\in H_{u'}$ s.t.
        \[F_{v'}=H_{v'}= \phi(G_v)\cong_R G_v=F_v\]
        so we must have $v\sim_o v'$ (i.e. $v\sim v'$).\\
        So there is a injection $\pi_V:VG\to VH$ s.t.~$\forall v\in VG v\sim \pi(v)$ and since we can reverse this construction by swapping $G$ and $H$ it is in fact a bijection. Furthermore since $\sim$ is non edge collapsing for each $e\in EG$ there is a unique $e'\in E_o(\pi(o(e)))$ s.t. $e\sim e'$ and as $\pi(o(e))$ is the only element of $VH$ that is related by $\sim$ to $o(e)$, this is the unique $e'\in EH$ s.t. $e\sim e'$.
        So we get an injection $\pi_E:EG\to EH$ s.t.~$\forall e\in EG e\sim \pi(e)$, again since the above construction is reversible $\pi_E$ is in fact a bijection.\\
        So if we set $\pi=(\pi_V,\pi_E)$ it is a graph isomorphism between $G$ and $H$, using the fact that $\sim$ is a graph equivalence relation.
    \end{proof}
    Now we can show that if two spider products produce almost isomorphic unfolding trees, the disjoint unions of their components are isomorphic.
    \begin{lemma}\label{lem11}
        For any two non-redundant spider products with robust components:
        \begin{align*}
            (G,S):&=\llangle (G_1,\rho_1,k_1),\dots,(G_n,\rho_n,k_n) \rrangle\\
            (H,R):&=\llangle (H_1,\sigma_1,l_1),\dots,(H_m,\sigma_m,l_m) \rrangle  
        \end{align*}
        where $G_1,\dots G_n,H_1,\dots,H_m$ are pairwise disjoint, with:
        \[ \mathcal{T}(G,S)\cong_A \mathcal{T}(H,R) \]
        we must have:
        \[\bigsqcup_{i=1}^{n}G_i\cong\bigsqcup_{j=1}^{m}H_j  \]
    \end{lemma}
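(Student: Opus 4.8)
The plan is to deduce the statement from \lemref{lem10}, applied to $\tilde G:=\bigsqcup_{i=1}^{n}G_i$ and $\tilde H:=\bigsqcup_{j=1}^{m}H_j$. So I would need three ingredients: that $\tilde G$ and $\tilde H$ are non-redundant; that every inclusion-maximal half-graph of $\tilde G$ is rooted-isomorphic to one of $\tilde H$ and vice versa; and then an appeal to \lemref{lem10}. For the first ingredient, since $(G,S)$ is a non-redundant spider product, the rigidity argument following \lemref{lem9} (together with the first point of \lemref{lem0}) shows $\sim_o$ is trivial on the non-root vertices of $G$; as no edge ever leaves a component, this is exactly triviality of $\sim_o$ on $V\tilde G$, which forces $\tilde G$ to be a non-redundant graph. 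The same reasoning applies to $\tilde H$, and also shows $G$ and $H$ themselves are non-redundant graphs.

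For the second ingredient it suffices, by symmetry, to fix $v\in V\tilde G^{m}$ and produce $v'\in V\tilde H$ with $\tilde G_v\cong_R\tilde H_{v'}$. Say $v\in VG_i$; since no edge leaves $G_i$ we have $\tilde G_v=(G_i)_v=G_v$, an inclusion-maximal half-graph, which (if $G_i$ is not a single vertex, that case being immediate) is robustly rooted and has $v$ as a root. Pick $w_0\in\mathcal{W}(G,S)$ with $T(w_0)=v$; note $|w_0|\ge 1$ and every proper piece of it lands in $VG_i$. From $\mathcal{T}(G,S)\cong_A\mathcal{T}(H,R)$ and \lemref{lem5} I get a basis $B$ of a cofinite inescapable subspace of $\mathcal{T}(G,S)$ with $M:=T[B]\subseteq VG$ having the matching property of \lemref{lem5}. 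The degenerate case $\varepsilon_S\in B$ (so $M=\{S\}$) I would dispose of separately: there \lemref{lem5} forces $\mathcal{T}(G,S)\cong_R\mathcal{T}(H_{m'},m')$ for some vertex $m'$, and unwinding this through \coref{cor1}, \lemref{lem4} and the non-redundancy just established gives $(G,S)\cong_R(H,R)$ as non-redundant spider products, whence the desired isomorphism of disjoint unions of components is read off at once. So assume $\varepsilon_S\notin B$. If some $b\in B$ is a nonempty prefix of $w_0$, then $T(b)\in VG_i$ reaches $v$, so $(G_i)_v\subseteq(G_i)_{T(b)}$ and maximality gives $G_{T(b)}=(G_i)_v$ with $T(b)\in M$. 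Otherwise the localized set $B_v:=\{\,b' : w_0b'\in B\,\}$, transported into $\mathcal{T}((G_i)_v,v)\cong_R\mathcal{T}(G,S)_{w_0}$ via \lemref{lem4}, is again a basis of a cofinite inescapable subspace (by \lemref{lem3}), so Claim~\ref{clm1} yields a root $r$ of $(G_i)_v$ with $r\in T[B_v]$; since $r$ is a root of $(G_i)_v$ (and so lies in the same strongly connected component of $G_i$ as $v$), $G_r=(G_i)_r=(G_i)_v$, and $r\in M$. In either case there is $m\in M$ with $G_m=\tilde G_v$.

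Now \lemref{lem5} provides $m'\in M'\subseteq VH$ with $\mathcal{T}(G_m,m)\cong_R\mathcal{T}(H_{m'},m')$. The half-graph $G_m=(G_i)_v$ is non-redundant, and $m'\neq R$: otherwise $\mathcal{T}((G_i)_v,v)\cong_R\mathcal{T}(H,R)$ would give $(G_i)_v\cong_R H$ by \coref{cor1} (both graphs being non-redundant), which is impossible since $(G_i)_v$ is robustly rooted while the spider root $R$ has empty incoming edge-set. Hence $m'\in VH_j$ for some $j$, $\tilde H_{m'}=H_{m'}=(H_j)_{m'}$ is non-redundant, and \coref{cor1} gives $\tilde G_v=(G_i)_v\cong_R(H_j)_{m'}=\tilde H_{m'}$. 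By symmetry the analogue holds with $\tilde G$ and $\tilde H$ interchanged, so \lemref{lem10} applies and $\tilde G\cong\tilde H$, which is precisely $\bigsqcup_{i=1}^{n}G_i\cong\bigsqcup_{j=1}^{m}H_j$.

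The main obstacle I expect is the middle step. Claim~\ref{clm1} says an index set $T[B]$ coming from an almost isomorphism must meet a root of the ambient \emph{robustly rooted} graph, but the ambient object of the almost isomorphism here, $\mathcal{T}(G,S)$, is a spider product and is not robustly rooted. The fix is to cut $B$ down to the half-tree above a word $w_0$ reaching $v$ and to check carefully that the restriction $B_v$ is still the basis of a cofinite inescapable subspace of the robustly rooted graph $(G_i)_v$, so that Claim~\ref{clm1} becomes applicable there; I also have to handle by hand the degenerate cases $\varepsilon_S\in B$ and $m'=R$, as above.
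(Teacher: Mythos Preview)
Your proof is correct and follows essentially the same strategy as the paper: localise a basis of a cofinite inescapable subspace to a half-tree sitting over a maximal half-graph $(G_i)_v$, invoke Claim~\ref{clm1} there to land a root of $(G_i)_v$ in $T[B]$, match it via the almost isomorphism to a half-graph of some $H_j$, and then conclude with \lemref{lem10}. The only organisational differences are that the paper works directly with the isomorphism $\Phi$ on the subspaces (rather than routing through the $M,M'$ of \lemref{lem5}), and always takes $w_0$ of length~$1$ (a single spider edge), which lets it avoid your prefix case-split; conversely, you are more explicit about the edge cases $\varepsilon_S\in B$ and $m'=R$, and about why $\tilde G,\tilde H$ (and indeed $G,H$) are non-redundant as graphs---your observation that the spider root is a source with strictly smaller half-graphs at every other vertex is exactly what is needed here. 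One small imprecision: when you write $\tilde G_v=(G_i)_v\cong_R(H_j)_{m'}$, the rooted isomorphism you obtain from \coref{cor1} is based at $m$, not at $v$; you should then push $v$ forward under this isomorphism to get the correct $v'$ (the paper does this explicitly as ``changing the root of $(H_j)_u$ to the image of $v$'').
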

    \begin{proof}
        Let $\mathcal{S},\mathcal{R}$ be cofinite inescapable subspaces of $\mathcal{T}(G,S),\mathcal{T}(H,R)$ resp. such that we have a isomorphism:
        \[\Phi:\mathcal{S}\to \mathcal{R}\]
        Let $B,C$ be the bases of $\mathcal{S},\mathcal{R}$ respectively. Now take for any $1\leq i\leq n$ some $v\in N_o(S)$ and consider $\mathcal{T}(G,S)_{(S,v,k)}\cap B$ for any $k\leq k_i$ s.t.~such an edge exists.  Now take $\phi_{v}:\mathcal{T}(G,S)_{(S,v,k)}\to \mathcal{T}(G_v,v)=\mathcal{T}((G_i)_v,v)$ to be the isomorphism from the proof of \lemref{lem4}. We can see that $\phi_{v}(\mathcal{T}(G,S)_{(S,v,k)}\cap B)$ is a basis of an inescapable cofinite subspace and since $G_i$ is robust, by the first claim in the proof of \thmref{thm2} we have some $y\in\phi_{v}(\mathcal{T}(G,S)_{(S,v,k)}\cap B)$ s.t.~T(y) is a root of $(G_i)_v$. Now let $x\in \mathcal{T}(G,S)_{(S,v,k)}\cap B$ be s.t.~$\phi_v(x)=y$, then $T(x)=T(y)$ and $G_{T(x)}=(G_i)_{T(y)}=(G_i)_v$. Now since $\mathcal{T}(G,S)_x\subseteq \mathcal{S}$, we can look at $\Phi(\mathcal{T}(G,S)_x)$. Since $\Phi$ is an isomorphism this will also be a half tree, rooted in some point in the basis so take $z\in C$ to be s.t.~$\Phi(\mathcal{T}(G,S)_x)=\mathcal{T}(H,R)_z$ so again we can apply \lemref{lem4} and the fact that $H$ and $G$ are non-redundant to get $G_{T(x)}\cong H_{T(z)}$. Taking $j$ to be s.t.~$T(z)\in H_j$ we get:
        \[(G_i)_v=G_{T(x)}\cong H_{T(z)}=(H_j)_{T(z)}\]
        Now for any $w\in VG_i^m$ we have some $v\in N_o(S)$ s.t. $(G_i)_w=(G_i)_v$ and thus we have some $H_j$ and some $u\in H_j$ s.t.
            \[(G_i)_v=(G_i)_w\cong(H_j)_u\]
        and by changing the root of $(H_j)_u$ to the image of $v$ we may assume that we have some $u'\in H_j$ s.t.
            \[(G_i)_v\cong_R(H_j)_{u'} \] 
        So each inclusion maximal half graph of each $G_i$ is rooted isomorphic to some half graph of some $H_j$.\\
        By reversing the preceding argument(we can do this by exchanging $G$ and $H$ and substituting $\Phi^{-1}$ for $\Phi$) we also get that each inclusion maximal half graph of each $H_j$ is isomorphic to some half graph of some $G_i$.\\
        So when we define $\tilde{G}:=\bigsqcup_{i=1}^{n}G_i$ and $\tilde{H}:=\bigsqcup_{j=1}^{m}H_j$ we can note that:
        \begin{align*}
            V\tilde{G}^m&=\bigsqcup_{i=1}^n VG_i^m\\
            V\tilde{H}^m&=\bigsqcup_{j=1}^m VH_j^m
        \end{align*}
        So using \lemref{lem10} we get:
        \[\tilde{G}\cong \tilde{H}\]

    \end{proof}
    So splitting every component into their connected parts and rearranging them we may assume that any two spider products $(G,S),(H,R)$ with robust components that produce almost isomorphic unfolding trees are are of the form:
      \begin{align*}
            (G,S):&=\llangle (G_1,\rho_1,k_1),\dots,(G_n,\rho_n,k_n) \rrangle\\
            (H,R):&=\llangle (G_1,\sigma_1,l_1),\dots,(G_n,\sigma_m,l_m) \rrangle  
        \end{align*}
    with each $G_i$ being connected and robust.\\
    In order to answer when two such spider products are almost isomorphic we have to introduce the \textbf{graph monoid} (following the definition from \cite{ara2007}). 
    \begin{defn}
         For any graph $G$ the graph monoid $\mathcal{M}(G)$ is the commutative monoid generated by the vertices $VG$, with the relations:
         \[\forall v\in VG\text{ s.t.~} E_o(v)\neq \emptyset,\ v=\sum_{e\in E_o(v)}t(e)\]
    \end{defn} 
    Now we can connect the structure of the monoid to the structure of subspaces of the unfolding trees and their bases.
    \begin{lemma}\label{lem12}   
        For any graph $G$ and any root $S\in VG$ we have for each basis $B$ of an inescapable cofinite subspace of $\mathcal{T}(G,S)$ we have:
        \[S=\sum_{p\in B} T(p)\]         
    \end{lemma}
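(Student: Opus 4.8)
The plan is to prove the identity by induction on $s_B:=\sum_{p\in B}|p|$, following the same pruning idea used in the proof of Claim~\ref{clm1}. In the base case $s_B=0$ every element of $B$ has length $0$; since $B$ is a $\leq_p$-antichain this forces $B=\{\varepsilon_S\}$, and the desired equality is just $S=T(\varepsilon_S)$.

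For the inductive step suppose $s_B>0$. As $\mathcal{S}$ is cofinite, $B$ is finite (Lemma~\ref{lem3}), so we may choose $b_m\in B$ of maximal length and write $b_m=we$ with $e\in E_o(T(w))$; note $w\notin B$, since $w<_p b_m$ would violate the antichain condition. The first step is to argue, exactly as in the proof of Claim~\ref{clm1}, that $we'\in B$ for \emph{every} $e'\in E_o(T(w))$: inescapability forces $we'$ to be $\leq_p$-comparable to some $b'\in B$, but a proper prefix $b'<_p we'$ would be a prefix of $w$ and hence of $b_m$, while a proper extension $we'<_p b'$ would be longer than $b_m$, both contradicting the properties of $B$. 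Having done this, set
\[ B':=\bigl(B\setminus\{we'\mid e'\in E_o(T(w))\}\bigr)\cup\{w\}. \]

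The next step is to verify that $B'$ is again the basis of an inescapable cofinite subspace, with $s_{B'}<s_B$. Finiteness is clear, hence cofiniteness by Lemma~\ref{lem3}; that $B'$ is an antichain reduces to checking that $w$ is $\leq_p$-incomparable to every surviving element of $B$ (an element strictly below $w$ would lie strictly below $b_m$, and an element strictly above $w$ would begin with some $e''\in E_o(T(w))$ and hence have $we''\in B$ as a prefix, forcing it to equal $we''$, which was pruned); and inescapability follows since any word comparable to some pruned $we'$ is already comparable to $w$. Finally $s_{B'}=s_B-d_o(T(w))(|w|+1)+|w|<s_B$ because $d_o(T(w))\geq 1$. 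This verification is essentially identical to the one carried out for Claim~\ref{clm1}, so I would cite it rather than repeat it.

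With that in hand the induction hypothesis gives $S=\sum_{p\in B'}T(p)$ in $\mathcal{M}(G)$. Since $E_o(T(w))\neq\emptyset$, the defining relation of the graph monoid gives $T(w)=\sum_{e'\in E_o(T(w))}t(e')=\sum_{e'\in E_o(T(w))}T(we')$; substituting this expression for the summand $T(w)$ in $\sum_{p\in B'}T(p)$ turns it into $\sum_{p\in B}T(p)$, which therefore also equals $S$. The only delicate point in the whole argument is the antichain/inescapability check for $B'$, and even that is already present in the paper inside the proof of Claim~\ref{clm1}; the remainder is bookkeeping with the prefix order together with a single application of the monoid relation, so I anticipate no genuine obstacle.
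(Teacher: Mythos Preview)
Your proof is correct and follows essentially the same approach as the paper: induction on $s_B=\sum_{p\in B}|p|$, pruning a maximal-length element $b_m=we$ together with all its siblings $we'$ to obtain $B'$, and then applying the graph-monoid relation at $T(w)$. The paper's own argument is terser, simply citing ``the proof of \thmref{thm2}'' (i.e.\ the proof of Claim~\ref{clm1}) for the existence of the pruning step and the fact that $B'$ is again a basis of a cofinite inescapable subspace, whereas you spell these verifications out; but there is no substantive difference.
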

    \begin{proof}
        We will prove the lemma by induction over $s_B:=\sum_{p\in B}|p|$. From the proof of \thmref{thm2} we see that there is some path $p\in\mathcal{T}(G,S)$, with $ E_o(T(p))\neq \emptyset$ s.t.~$\{pe\mid o(e)=T(p)\}\subseteq B$ then:
        \[B':=B\setminus \{pe\mid o(e)=T(p)\}\cup \{p\}\]
        is also a basis of a cofinite inescapable subspace with $s_{B'}<s_B$, and thus by induction we have in $\mathcal{M}(G)$:
        \[S=\sum_{q\in B'}T(q)=T(p)+\sum_{q\in B'\setminus\{p\}}T(q)\]
        and by the definition of the graph monoid we have:
        \[S=T(p)+\sum_{q\in B'\setminus\{p\}}T(q)=\sum_{e\in E_o(T(p))}T(pe)+\sum_{q\in B'\setminus\{p\}}T(q)=\sum_{q\in B}T(q)\]
    \end{proof}
    A converse of the above lemma can be formulated as follows:
    \begin{lemma}\label{lem13}
        For any graph $G$ and any two roots $S,R$ of $G$ s.t.~in $\mathcal{M}(G)$:
        \[S=R\]
        there is a basis $B_S$ of an cofinite inescapable subspace in $\mathcal{T}(G,S)$ and a basis $B_R$ of an cofinite inescapable subspace in $\mathcal{T}(G,R)$, with a bijection:
        \[\pi: B_S\to B_R\]
        s.t.~$\forall p\in B_S,\ T(p)=T(\pi(p))$    
    \end{lemma}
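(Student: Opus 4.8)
The plan is to realize the equality $S=R$ in $\mathcal{M}(G)$ by a confluent rewriting on finite multisets of vertices, and then to lift each rewriting step to an operation on bases of cofinite inescapable subspaces, so that a common ``descendant'' of $\{S\}$ and $\{R\}$ produces the two bases $B_S,B_R$ together with the terminus-preserving bijection.

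First I would fix the combinatorial set-up. Since $\mathcal{M}(G)$ is the free commutative monoid on $VG$ modulo the congruence generated by the relations $v\leftrightarrow\{t(e)\mid e\in E_o(v)\}$ for $v$ with $E_o(v)\neq\emptyset$, its elements are finite multisets of vertices up to this congruence, and the congruence is generated by the one-step \emph{expansion} relation: $A\to A'$ iff $A'$ is obtained from $A$ by deleting one occurrence of a non-sink vertex $v$ and inserting the multiset $\{t(e)\mid e\in E_o(v)\}$ (with multiplicity). Finite outdegree is implicit in the definition of $\mathcal{M}(G)$, so every multiset below stays finite. The key step is to show $\to$ has the diamond property: if $A\to A_1$ and $A\to A_2$ with $A_1\neq A_2$, then the two expanded occurrences must be occurrences of \emph{distinct} vertices, since expanding two occurrences of the same vertex yields the same multiset; hence $A$ contains independent occurrences of $v_1\neq v_2$, and expanding both reaches a common $A_3$ with $A_1\to A_3$ and $A_2\to A_3$. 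The diamond property gives confluence of $\to^{*}$, hence the Church--Rosser property of the generated congruence: $S=R$ in $\mathcal{M}(G)$ iff there is a finite multiset $C$ with $\{S\}\to^{*}C$ and $\{R\}\to^{*}C$. I expect this Church--Rosser step to be the main obstacle; the diamond argument itself is short once the rewriting is phrased correctly, and everything after it is bookkeeping.

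Next I would prove a lifting lemma by induction on the number of expansion steps: if $\{S\}\to^{*}C$, there is a basis $B$ of a cofinite inescapable subspace of $\mathcal{T}(G,S)$ with $\{T(p)\mid p\in B\}=C$ as multisets. The base case is $B=\{\varepsilon_S\}$. For the inductive step, write $\{S\}\to^{*}C'\to C$ where $C$ arises from $C'$ by expanding an occurrence of $v$; take a basis $B'$ with terminus-multiset $C'$, pick any $p\in B'$ with $T(p)=v$, and set $B:=(B'\setminus\{p\})\cup\{pe\mid e\in E_o(v)\}$. Using that $B'$ is a basis (so nothing in $B'$ has $p$ as a proper prefix), no two distinct elements of $B$ are $\leq_p$-comparable, so by \lemref{lem3} $B$ is the basis of some subspace; it is cofinite since $B$ is finite, and inescapable since any word comparable with $p$ is comparable with some $pe$, while comparability with the other elements of $B'$ is unchanged (again via the criterion in \lemref{lem3}). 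Finally $\{T(q)\mid q\in B\}=(\{T(q)\mid q\in B'\}\setminus\{v\})\cup\{t(e)\mid e\in E_o(v)\}=C$.

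To conclude, apply the Church--Rosser step to obtain a common $C$ with $\{S\}\to^{*}C$ and $\{R\}\to^{*}C$, then apply the lifting lemma twice to get bases $B_S$ of a cofinite inescapable subspace of $\mathcal{T}(G,S)$ and $B_R$ of one of $\mathcal{T}(G,R)$ with $\{T(p)\mid p\in B_S\}=C=\{T(p)\mid p\in B_R\}$. Equality of these multisets means that for each vertex $v$ the finite sets $\{p\in B_S\mid T(p)=v\}$ and $\{p\in B_R\mid T(p)=v\}$ have equal cardinality; choosing a bijection between them for each $v$ and taking the union yields the required bijection $\pi\colon B_S\to B_R$ with $T(p)=T(\pi(p))$ for all $p\in B_S$.
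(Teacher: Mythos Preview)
Your proof is correct and follows essentially the same route as the paper: obtain a common expansion of the multisets $\{S\}$ and $\{R\}$, lift each expansion chain to a basis of a cofinite inescapable subspace by the same one-step replacement $B'\mapsto (B'\setminus\{p\})\cup\{pe\mid e\in E_o(T(p))\}$, and read off the terminus-preserving bijection from equality in $\mathcal{F}(G)$. The only difference is that where the paper invokes \cite[Lemma~4.3]{ara2007} for the Church--Rosser property of the expansion relation, you supply a direct diamond argument; this is a fine self-contained substitute.
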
 
    \begin{proof}
        To show this we will look at the free commutative monoid over $VG$: $\mathcal{F}(G)$ and define the set $R_0\subseteq\mathcal{F}(G)\times \mathcal{F}(G)$ by:
        \[R_0:=\{\Big(s+v,s+(\sum_{e\in E_o(v)}t(e))\Big)\in \mathcal{F}(G)\times\mathcal{F}(G) \mid s\in \mathcal{F}(G),v\in VG,E_o(v)\neq \emptyset\}\]
        In \cite[Lemma~4.3]{ara2007} it has been shown that if two formal sums $s,s'\in \mathcal{F}(G)$ to be equal in $\mathcal{M}(G)$ we have two finite series: $s=s_0,s_1,\dots,s_n\in \mathcal{F}(G)$ and $s'=s'_0,s'_1,\dots,s'_m\in \mathcal{F}(G)$ s.t.~$(s_0,s_1),\dots,(s_{n-1},s_{n}),(s'_0,s'_1),\dots,(s'_{m-1},s_m)\in R_0$ and $s_n=s'_m$ in $\mathcal{F}(G)$.\\
        So we will need to show that for any such sequence, if $s_0=S$ then $s_n=\sum_{p\in B} T(p)$, for some basis of an inescapable cofinite subspace. This can be done with induction over $n$.\\
        For the induction beginning at $n=0$ we will just take $B=\{\epsilon_S\}$.\\
        For the induction step take $n>0$ and by induction hypothesis we have some basis of an inescapable cofinite subspace $B'$, s.t.~$s_{n-1}=\sum_{p'\in B'} T(p')$. So by the definition of $R_0$ we have some $q\in B'$, with $E_o(T(q))\neq \emptyset$ s.t.:
        \[s_n=\sum_{p'\in B'\setminus\{q\}}T(p')+\sum_{e\in o^{-1}(\{T(q)\})}t(e)=\sum_{p\in B'\setminus\{q\}\cup\{qe\mid o(e)=T(q)\}}T(p)\]
        Now if we define $B:=B'\setminus\{q\}\cup\{qe\in\mathcal{T}(G,S)\mid o(e)=T(q)\}$, we can see that this is a basis of a cofinite inescapable subspace, by the fact that $B'$ is. This shows the claim.\\
        So if we have two roots $S,R$ in $G$ s.t.~$S=R$ in the graph monoid we have a basis $B_S$ of an cofinite inescapable subspace in $\mathcal{T}(G,S)$ and a basis $B_R$ of an cofinite inescapable subspace in $\mathcal{T}(G,R)$ s.t.~in $\mathcal{F}(G)$:
        \[\sum_{p\in B_S}T(p)=\sum_{p\in B_R}T(p)\]
        By definition this means that we have a bijection:
        \[\pi: B_S\to B_R\]
        s.t.~$\forall p\in B_S,\ T(p)=T(\pi(p))$ as required.
    \end{proof}
    Now we can connect the graph monoid and almost isomorphism:
    \begin{theorem}\label{thm3}
        For any non-redundant graph $G$ and any two roots $S,R$ the unfolding trees $\mathcal{T}(G,S)$, $\mathcal{T}(G,S)$ are almost isomorphic if and only if:
        \[S=R\]
        in the graph monoid.
    \end{theorem}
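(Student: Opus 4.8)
The plan is to prove both directions by passing through cofinite inescapable subspaces and then converting statements about their bases into identities in $\mathcal{M}(G)$. Two preliminary observations will be used repeatedly. First, since $G$ is non-redundant, the relation $\sim_o$ on $VG$ is trivial: by the first point of \lemref{lem0} there is a non edge collapsing equivalence relation whose vertex component is $\sim_o$, and non-redundancy forces it to be trivial, so $x\sim_o y$ implies $x=y$. Second, every half-graph $G_x$ is again non-redundant (as established in the proof of \lemref{lem5}), so \coref{cor1} applies to all of them.

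For the ``if'' direction I would start from $S=R$ in $\mathcal{M}(G)$ and apply \lemref{lem13} to obtain bases $B_S,B_R$ of cofinite inescapable subspaces of $\mathcal{T}(G,S)$ and $\mathcal{T}(G,R)$ together with a bijection $\pi\colon B_S\to B_R$ with $T(p)=T(\pi(p))$ for all $p\in B_S$. Writing $\mathcal{S}=\bigsqcup_{p\in B_S}\mathcal{T}(G,S)_p$ and $\mathcal{R}=\bigsqcup_{q\in B_R}\mathcal{T}(G,R)_q$ for the associated subspaces (\lemref{lem3}), \lemref{lem4} gives, for each $p$, rooted isomorphisms
\[\mathcal{T}(G,S)_p\cong_R\mathcal{T}(G_{T(p)},T(p))=\mathcal{T}(G_{T(\pi(p))},T(\pi(p)))\cong_R\mathcal{T}(G,R)_{\pi(p)}.\]
Gluing these along $\pi$ produces an isomorphism $\mathcal{S}\cong\mathcal{R}$, and since $\mathcal{S},\mathcal{R}$ are cofinite inescapable subspaces their complements are finite subtrees, so removing them witnesses $\mathcal{T}(G,S)\cong_A\mathcal{T}(G,R)$ (as in the proof of \lemref{lem7}).

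For the ``only if'' direction I would start from $\mathcal{T}(G,S)\cong_A\mathcal{T}(G,R)$ and use \lemref{lem2} to get cofinite inescapable subspaces $\mathcal{S}\subseteq\mathcal{T}(G,S)$, $\mathcal{R}\subseteq\mathcal{T}(G,R)$ with an isomorphism $\Phi\colon\mathcal{S}\to\mathcal{R}$. If $B,C$ are their bases then $\Phi$ restricts to a bijection $\Phi|_B\colon B\to C$, because the basis of a subspace consists exactly of the vertices with empty incoming neighbourhood inside it. Since $\Phi$ preserves $\leq_p$, for each $p\in B$ it restricts to a rooted isomorphism $\mathcal{T}(G,S)_p\cong_R\mathcal{T}(G,R)_{\Phi(p)}$; combining this with \lemref{lem4} gives $\mathcal{T}(G_{T(p)},T(p))\cong_R\mathcal{T}(G_{T(\Phi(p))},T(\Phi(p)))$, and then \coref{cor1} (valid since both half-graphs are non-redundant) yields $(G_{T(p)},T(p))\cong_R(G_{T(\Phi(p))},T(\Phi(p)))$, i.e.\ $T(p)\sim_o T(\Phi(p))$, hence $T(p)=T(\Phi(p))$ by triviality of $\sim_o$. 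Thus $\Phi|_B$ is a bijection $B\to C$ with $T(p)=T(\Phi(p))$, so $\sum_{p\in B}T(p)=\sum_{q\in C}T(q)$ in $\mathcal{M}(G)$; applying \lemref{lem12} once for each root then gives $S=\sum_{p\in B}T(p)=\sum_{q\in C}T(q)=R$ in $\mathcal{M}(G)$.

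I expect the main obstacle to be this last direction, specifically the step from ``the half-trees hanging off the two bases are pairwise rooted-isomorphic'' to ``their roots are equal vertices of $G$'': a priori one only obtains $o$-equivalence of the roots, and it is exactly non-redundancy of $G$ — used both to make each $G_x$ non-redundant and to collapse $\sim_o$ to equality — that upgrades this to genuine equality, which is what lets the two formal sums appearing via \lemref{lem12} be identified. The remaining points (that a basis-indexed family of half-tree isomorphisms glues to a subspace isomorphism, and that isomorphic cofinite inescapable subspaces yield almost-isomorphic trees) are routine and I would treat them briefly.
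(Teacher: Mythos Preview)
Your proposal is correct and follows essentially the same route as the paper: both directions pass through \lemref{lem2}/\lemref{lem13} to produce bases of cofinite inescapable subspaces related by a terminus-preserving bijection, then invoke \lemref{lem4} and \lemref{lem12} to translate between subspace isomorphisms and monoid identities. Your argument is in fact slightly more explicit than the paper's at the key step $T(p)=T(\Phi(p))$, spelling out the passage through \coref{cor1} and the triviality of $\sim_o$, where the paper simply asserts this from non-redundancy.
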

    \begin{proof}
        For the ``only if'' direction assume that  $\mathcal{T}(G,S)$, $\mathcal{T}(G,S)$ are almost isomorphic and take some cofinite subspaces $\mathcal{S}\subseteq \mathcal{T}(G,S)$, $\mathcal{S}'\subseteq \mathcal{T}(G,R)$ with an isomorphism:
        \[\Phi:\mathcal{S}\to\mathcal{S}'\]
        Let $B,B'$ be the bases of $\mathcal{S}$ and $\mathcal{S}'$ resp. Now since $\Phi[B]=B'$ this gives us a bijection $\pi:=\Phi|_B$ between $B$ and $B'$. Furthermore by \lemref{lem4} we have  $\forall b\in B,\ \mathcal{T}(G_{T(b)},T(b))\cong_R \mathcal{T}(G_{T(\pi(b))},T(\pi(b)))$ and thus since $G$ is non-redundant we must have $T(b)=T(\pi(b))$. So we have:
        \[\sum_{p\in B} T(p)=\sum_{p'\in B'} T(p')\]
        in the graph monoid. \Lemref{lem12} gives us:
         \[S=\sum_{p\in B} T(p)=\sum_{p'\in B'} T(p')=R\]
         For the converse implication we take from \lemref{lem13} some cofinite inescapable subspaces $\mathcal{S}\subseteq \mathcal{T}(G,S)$, $\mathcal{S}'\subseteq \mathcal{T}(G,R)$ s.t.~for it's resp. bases $B,B'$ we have a bijection:
         \[\pi: B\to B'\]
        s.t.~$\forall p\in B_S,\ T(p)=T(\pi(p))$. So by \lemref{lem4} we have $\mathcal{T}(G,S)_p\cong\mathcal{T}(G,R)_{\pi(p)}$. Since these half-trees are the connected components of $\mathcal{S}$ and $\mathcal{S}'$ this gives us an isomorphism between them. So the trees $\mathcal{T}(G,S)$ and $\mathcal{T}(G,R)$ are almost isomorphic.\\
    \end{proof} 
    If we combine this theorem with \thmref{thm2} we can classify unfolding trees of robustly rooted graphs. This classification simply associates each such tree with a unique tuple consisting of a non-redundant robustly rooted graph and a member of the graph monoid that is equal to some root of this graph.   
    To classify general unfolding trees we will have to generalise the above results.
    \begin{lemma}\label{lem14}
        For any graph $G$, any positive integer $k\in \mathds{Z}_{>0}$ and any function $\rho:\{1,\dots,k\}\to VG$ s.t.
        \[\bigcup_{1\leq i\leq k} G_{\rho(i)}=G\]
        we have for each basis $B$ of some cofinite inescapable subspace of $\mathcal{T}(\llangle(G,k,\rho)\rrangle,S)$:
        \[\sum_{i=1}^k\rho(i)=\sum_{b\in B} T(b)\]
        in $\mathcal{M}(G)$
    \end{lemma}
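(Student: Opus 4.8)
The plan is to mimic the induction in the proof of \lemref{lem12}, but to run it inside $\mathcal{M}(G)$ rather than inside the graph monoid of the spider product, so that the base case of the induction produces $\sum_{i=1}^k\rho(i)$ instead of a root. Write $\tilde{G}:=\llangle(G,k,\rho)\rrangle$ with root $S$. First I would record two elementary facts. (i) Since $S$ is a source of $\tilde{G}$, every walk $b\in\mathcal{W}(\tilde{G},S)$ with $|b|\geq 1$ has $T(b)\in VG$. (ii) For every $v\in VG$ the set $E_o(v)$ and the assignment $e\mapsto t(e)$ are literally the same in $\tilde{G}$ as in $G$, because the spider product only adds edges issuing from $S$; hence the defining relations of $\mathcal{M}(G)$ are exactly those relations of $\mathcal{M}(\tilde{G})$ that do not mention $S$, the only further relation being $S=\sum_{i=1}^k\rho(i)$. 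One could therefore phrase (ii) as a monoid isomorphism $\mathcal{M}(\tilde{G})\cong\mathcal{M}(G)$ fixing $VG$ and sending $S\mapsto\sum_i\rho(i)$, obtained by a Tietze transformation eliminating the generator $S$, and deduce the lemma by transporting \lemref{lem12} across it; the direct induction below sidesteps the need to verify that isomorphism.

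A basis $B$ of an inescapable cofinite subspace consists of pairwise $\leq_p$-incomparable walks, so either $B=\{\varepsilon_S\}$ or $\varepsilon_S\notin B$; by (i) the right-hand side $\sum_{b\in B}T(b)$ is an element of $\mathcal{M}(G)$ exactly in the second case, which I therefore assume. Then each $b\in B$ begins with one of the $k$ (pairwise distinct) edges out of $S$, and inescapability (via \lemref{lem3}, applied to the length-one walks from $S$) forces every one of those edges to occur, so $|B|\geq k$ and $s_B:=\sum_{b\in B}|b|\geq k$. I induct on $s_B$. If $s_B=k$ then necessarily $B=\{(S,\rho(i),i)\mid 1\leq i\leq k\}$ and $\sum_{b\in B}T(b)=\sum_{i=1}^k\rho(i)$ verbatim. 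If $s_B>k$, then some element of $B$ has length $\geq 2$, and as in the proof of \thmref{thm2} (quoted in the proof of \lemref{lem12}) there is a walk $w$ with $E_o(T(w))\neq\emptyset$ and $\{we'\mid e'\in E_o(T(w))\}\subseteq B$; since $s_B>k$ we may take $w$ of length $\geq 1$, so by (i) $T(w)\in VG$. Then $B':=\big(B\setminus\{we'\mid e'\in E_o(T(w))\}\big)\cup\{w\}$ is again the basis of an inescapable cofinite subspace, with $k\leq s_{B'}<s_B$ and $\varepsilon_S\notin B'$. The induction hypothesis gives $\sum_{i=1}^k\rho(i)=\sum_{b'\in B'}T(b')$ in $\mathcal{M}(G)$, and rewriting the single term $T(w)$ by the $\mathcal{M}(G)$-relation $T(w)=\sum_{e'\in E_o(T(w))}t(e')=\sum_{e'\in E_o(T(w))}T(we')$, which is legitimate by (ii), turns the right-hand side into $\sum_{b\in B}T(b)$, completing the induction.

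The step I would be most careful about is exactly the bookkeeping around the root: ensuring the induction bottoms out at $B=\{(S,\rho(i),i)\}$, whose terminus-sum is $\sum_i\rho(i)$, rather than at $B=\{\varepsilon_S\}$, whose ``terminus-sum'' would be the non-vertex $S$; and ensuring the reduction step never reintroduces $\varepsilon_S$, which holds because the inserted walk $w$ is a proper prefix of the $we'\in B$ and hence nonempty once $s_B>k$. Beyond this, the argument is a routine transcription of \lemref{lem12}, using only (i) and (ii), i.e.\ that $\tilde{G}$ coincides with $G$ away from the root.
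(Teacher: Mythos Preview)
Your argument is correct. The paper takes the other route you sketch as an aside: it applies \lemref{lem12} verbatim in $\mathcal{M}(\tilde{G})$ to get $S=\sum_{b\in B}T(b)$, combines this with $S=\sum_i\rho(i)$ in $\mathcal{M}(\tilde{G})$, and then shows (using \cite[Lemma~4.3]{ara2007} and the fact that $S$ is a source) that any equality between $S$-free sums that holds in $\mathcal{M}(\tilde{G})$ already holds in $\mathcal{M}(G)$. In other words, the paper essentially carries out the Tietze-style transfer you call the ``alternative'' and skips the direct induction; you do the opposite. Your direct induction is self-contained and avoids the external citation, at the price of redoing the combinatorics of \lemref{lem12}; the paper's proof is shorter but leans on \cite{ara2007}. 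Your careful exclusion of $B=\{\varepsilon_S\}$ and the verification that the reduction step never reintroduces $\varepsilon_S$ (via $m_B\geq 2$ when $s_B>k$) are points the paper glosses over, so your bookkeeping is in fact tighter there.
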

    \begin{proof}
        We first note that if any two sums $s,t\in \mathcal{F}(G)$ are equal in $\mathcal{M}(\llangle(G,k,\rho)\rrangle)$ then from \cite[Lemma~4.3]{ara2007} we have $s=s_0,s_1.\dots s_n, t=t_0,t_1,\dots,t_m\in \mathcal{F}(\llangle(G,k,\rho)\rrangle)$ s.t.~$s_n=t_m$ and 
        \[(s_0,s_1),\dots,(s_{n-1},s_n),(t_0,t_1),\dots,(t_{m-1},t_m)\in R_0(\llangle(G,k,\rho)\rrangle)\]
        Where $R_0$ is defined as in the proof of \Lemref{lem13}. However since $S$ has no incoming edges and $s,t$ have no $S$ in the sums neither can $s_1,\dots,s_n$ and $t_1,\dots,t_m$. Thus we have 
        \[(s_0,s_1),\dots,(s_{n-1},s_n),(t_0,t_1),\dots,(t_{m-1},t_m)\in R_0(G)\]
        giving us $s=t$ in $\mathcal{M}(G)$. So it is enough to show the equality from the Lemma in $\mathcal{M}(\llangle(G,k,\rho)\rrangle)$.\\
        To do that we note that in $\mathcal{M}(\llangle(G,k,\rho)\rrangle)$:
        \[S=\sum_{e\in E_o(S)}t(e)=\sum_{i=1}^k t( (S,\rho(i),i) )=\sum_{i=1}^k \rho(i)\]
        And by \lemref{lem12} we have in  $\mathcal{M}(\llangle(G,k,\rho)\rrangle)$:
        \[ S=\sum_{p\in B} T(p) \]
        giving us:
        \[\sum_{i=1}^k\rho(i)=\sum_{b\in B} T(b)\]
        in $\mathcal{M}(\llangle(G,k,\rho)\rrangle)$ and thus also in $\mathcal{M}(G)$.
    \end{proof}
    Similarly we can generalise \lemref{lem13} as follows:
    \begin{lemma}\label{lem15}
        For any non-redundant graph $G$, any two $k,l\in \mathds{Z}_{>0}$ and any two functions $\rho:\{1,\dots,k\}\to VG$, $\sigma:\{1,\dots,l\}\to VG$ s.t.
        \[\bigcup_{1\leq j\leq l} G_{\sigma(j)}=\bigcup_{1\leq i\leq k} G_{\rho(i)}=G\]
        if we have:
        \[\sum_{i=1}^k\rho(i)=\sum_{j=0}^l\sigma(j)\]
        in $\mathcal{M}(G)$, then for:
        \begin{align*}
            (G_{\rho},S_{\rho}):=\llangle(G,\rho,k)\rrangle\\
            (G_{\sigma},S_{\sigma}):=\llangle(G,\sigma,l)\rrangle 
        \end{align*} 
        the trees $\mathcal{T}(G_{\rho},S_{\rho}),\mathcal{T}(G_{\sigma},S_{\sigma})$ are almost isomorphic.
    \end{lemma}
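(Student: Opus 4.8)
The plan is to prove this by the same mechanism as \lemref{lem13}, routed through the spider product via \lemref{lem14}, and then to conclude exactly as in the converse half of \thmref{thm3}.

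First I would record the \emph{reduction}: it suffices to exhibit cofinite inescapable subspaces $\mathcal{S}\subseteq\mathcal{T}(G_\rho,S_\rho)$ and $\mathcal{S}'\subseteq\mathcal{T}(G_\sigma,S_\sigma)$ whose bases $B,B'$ consist only of words of positive length, together with a bijection $\pi\colon B\to B'$ satisfying $T(p)=T(\pi(p))$ for all $p\in B$. The reason for insisting $|p|\ge 1$ is that then $T(p)\in VG$, and since $S_\rho$ is a source of $G_\rho$ it can never be reached from $T(p)$; hence the half-graph $(G_\rho)_{T(p)}$ is literally the half-graph $G_{T(p)}$ of $G$, and likewise $(G_\sigma)_{T(\pi(p))}=G_{T(\pi(p))}=G_{T(p)}$. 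By \lemref{lem4} one then gets
\[\mathcal{T}(G_\rho,S_\rho)_p\;\cong_R\;\mathcal{T}(G_{T(p)},T(p))\;\cong_R\;\mathcal{T}(G_\sigma,S_\sigma)_{\pi(p)},\]
and, since the half-trees $\mathcal{T}(G_\rho,S_\rho)_p$ ($p\in B$) and $\mathcal{T}(G_\sigma,S_\sigma)_{p'}$ ($p'\in B'$) are exactly the connected components of $\mathcal{S}$ and $\mathcal{S}'$ by \lemref{lem3}, gluing these rooted isomorphisms along $\pi$ produces an isomorphism $\mathcal{S}\cong\mathcal{S}'$, whence $\mathcal{T}(G_\rho,S_\rho)\cong_A\mathcal{T}(G_\sigma,S_\sigma)$ just as at the end of the proof of \thmref{thm3}. (Non-redundancy of $G$ is in fact not needed for this direction, only the literal equality of half-graphs away from the root.)

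Next I would produce the two bases by replaying the computation behind \lemref{lem13} inside the free commutative monoid $\mathcal{F}(G)$ on $VG$. By \cite[Lemma~4.3]{ara2007}, the hypothesis $\sum_{i=1}^k\rho(i)=\sum_{j=1}^l\sigma(j)$ in $\mathcal{M}(G)$ yields finite $R_0(G)$-rewriting chains $\sum_i\rho(i)=s_0,s_1,\dots,s_n$ and $\sum_j\sigma(j)=s'_0,\dots,s'_m$ with $s_n=s'_m$ in $\mathcal{F}(G)$, where $R_0(G)$ is the relation from the proof of \lemref{lem13}. I would then show, by induction on the chain length, that $s_t=\sum_{p\in B_t}T(p)$ in $\mathcal{F}(G)$ for some basis $B_t$ of a cofinite inescapable subspace of $\mathcal{T}(G_\rho,S_\rho)$ all of whose elements have length $\ge 1$: for $t=0$ take $B_0=\{(S_\rho,\rho(i),i)\mid 1\le i\le k\}$, which is the basis of $\bigsqcup_i\mathcal{T}(G_\rho,S_\rho)_{(S_\rho,\rho(i),i)}$ and has $\sum_{p\in B_0}T(p)=\sum_i\rho(i)$; for the inductive step, an $R_0(G)$-move rewrites one occurrence of a vertex $v$ with $E_o(v)\ne\emptyset$ as $\sum_{e\in E_o(v)}t(e)$, so picking $q\in B_t$ with $T(q)=v$ (such a $q$ exists since $v$ appears in $\sum_p T(p)$) and setting $B_{t+1}=(B_t\setminus\{q\})\cup\{qe\mid e\in E_o(v)\}$ works by the very computation in \lemref{lem13} --- here one uses that the out-edges of $v\ne S_\rho$ coincide in $G_\rho$ and in $G$, and that $|q|\ge 1$ keeps the new words of positive length. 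Running the symmetric induction on the $\sigma$-side gives bases $B'_t$ of cofinite inescapable subspaces of $\mathcal{T}(G_\sigma,S_\sigma)$ with $s'_t=\sum_{p\in B'_t}T(p)$. Taking $B:=B_n$ and $B':=B'_m$ then gives $\sum_{p\in B}T(p)=s_n=s'_m=\sum_{p\in B'}T(p)$ in $\mathcal{F}(G)$, i.e.\ an equality of multisets of terminal vertices, i.e.\ the bijection $\pi$ required by the reduction.

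I do not expect a serious obstacle: the whole argument is a recombination of \lemref{lem13}, \lemref{lem14}, \lemref{lem4} and the endgame of \thmref{thm3}. The two points demanding care are purely organisational. First, one must keep every basis word of positive length, so that all half-graphs occurring are genuine half-graphs of $G$; this is what allows the isomorphisms of half-trees to be transported even though $G_\rho$ and $G_\sigma$ disagree at the root (which is precisely why the analogue of the forward direction of \thmref{thm3} would not go through verbatim). Second, one must observe that each $R_0(G)$-rewriting step only touches a vertex of $VG$ whose out-neighbourhood is unchanged in $G_\rho$ (resp.\ $G_\sigma$), so the monoid computation legitimately lives in $\mathcal{F}(G)$ rather than in $\mathcal{M}(G_\rho)$ or $\mathcal{M}(G_\sigma)$ --- this is the same passage already used in the proof of \lemref{lem14}.
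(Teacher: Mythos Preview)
Your proposal is correct and follows essentially the same route as the paper: both invoke \cite[Lemma~4.3]{ara2007} to obtain $R_0(G)$-rewriting chains from $\sum_i\rho(i)$ and $\sum_j\sigma(j)$ to a common element of $\mathcal F(G)$, replay the inductive basis-construction from \lemref{lem13} inside $\mathcal T(G_\rho,S_\rho)$ and $\mathcal T(G_\sigma,S_\sigma)$, and finish via \lemref{lem4}. The only cosmetic difference is that the paper prepends $s_0:=S_\rho$ to the chain (using $(s_0,s_1)\in R_0(G_\rho)$) so as to quote \lemref{lem13} verbatim, whereas you start one step later with the explicit length-$1$ basis $\{(S_\rho,\rho(i),i)\}$ and carry the ``positive length'' invariant by hand; your observation that non-redundancy of $G$ is not actually used in this direction is also correct.
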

    \begin{proof}
        First define:
        \begin{align*}
            s_1:&=\sum_{i=1}^k\rho(i)\\
            r_1:&=\sum_{j=1}^l\sigma(j)
        \end{align*}
        By \cite[Lemma~4.3]{ara2007} we have since $s_1=r_1$ in $\mathcal{M}(G)$ we have two finite series of elements of $\mathcal{F}(G)$: $s_1,\dots,s_n$ and $r_1,\dots,r_m$ s.t.~$(s_1,s_2),\dots,(s_{n-1},s_n)\in R_0(G)$, $(r_1,r_2),\dots,(r_{m-1},r_m)\in R_0(G)$ and $s_n=r_m$ in $\mathcal{F}(G)$. Now if we set $s_0=S_{\rho}$ we have $(s_0,s_1)\in R_0(G_{\rho})$ and thus $(s_0,s_1),\dots,(s_{n-1},s_n)\in R_0(G_{\rho})$ and by the same argument as in the proof of \lemref{lem13} we have some basis $B_{\rho}$ of a cofinite inescapable subspaces of $\mathcal{T}(G_{\rho},S_{\rho})$ s.t.
        \[s_n=\sum_{b\in B_{\rho}}T(b)\]
        Analogously we also have a basis $B_{\sigma}$ of a cofinite inescapable subspaces of $\mathcal{T}(G_{\sigma},S_{\sigma})$ s.t.
        \[r_m=\sum_{b\in B_{\sigma}}T(b)\]
        and since $s_n=r_m$ in $\mathcal{F}(G)$ we have a bijection:
        \[\pi:B_{\rho}\to B_{\sigma}\]
        with $\forall p\in B_{\rho},\ T(p)=T(\pi(p))$. This gives us an isomorphism between the cofinite inescapable subspaces with the bases $B_{\rho}$ and $B_{\sigma}$. Thus the trees $\mathcal{T}(G_{\rho},S_{\rho})$ and $\mathcal{T}(G_{\sigma},S_{\sigma})$ are almost isomorphic.
    \end{proof}
    This gives us a way to identify when two spider products with the same components are almost isomorphic to each other:
    \begin{theorem}\label{thm4}
        For any two non redundant spider products:
        \begin{align*}
            (G_{\rho},S_{\rho}):&=\llangle (G_1,\rho_1.k_1),\dots,(G_n,\rho_n.k_n) \rrangle\\
            (G_{\sigma},S_{\sigma}):&=\llangle (G_1,\sigma_1.l_1),\dots,(G_n,\sigma_n.l_n) \rrangle\\
        \end{align*}
        then $\mathcal{T}(G_{\rho},S_{\rho})$ and $\mathcal{T}(G_{\sigma},S_{\sigma})$ are almost isomorphic if and only if for each $m\in\{1,\dots,n\}$ we have:
        \[\sum_{i=1}^{k_m}\rho_m(i)=\sum_{j=1}^{l_m}\sigma_m(j)\]
        in $\mathcal{M}(G_m)$.
    \end{theorem}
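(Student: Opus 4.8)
The plan is to reduce the statement to its one–component instances \lemref{lem14} and \lemref{lem15}, using the way a spider product decomposes its unfolding tree. Because the $G_m$ are pairwise disjoint and the only edges of $G_\rho$ meeting $S_\rho$ are the spider legs, every nonempty walk out of $S_\rho$ begins with a leg $(S_\rho,\rho_m(i),\cdot)$ and thereafter stays inside $G_m$; hence $(G_\rho)_{T(p)}=(G_m)_{T(p)}$ whenever $T(p)\in VG_m$, and by \lemref{lem4} the subtree of $\mathcal{T}(G_\rho,S_\rho)$ hanging below such a leg is rooted–isomorphic to $\mathcal{T}((G_m)_{\rho_m(i)},\rho_m(i))$. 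Thus $\mathcal{T}(G_\rho,S_\rho)$ is obtained from the one–component trees $\mathcal{T}(\llangle(G_m,\rho_m,k_m)\rrangle,\cdot)$ by gluing roots, and, after discarding $\varepsilon_{S_\rho}$, the basis of any cofinite inescapable subspace splits as $B=\bigsqcup_m B_m$ with $B_m=\{b\in B\mid T(b)\in VG_m\}$; the same holds on the $\sigma$ side.

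For the ``if'' direction, assume $\sum_{i=1}^{k_m}\rho_m(i)=\sum_{j=1}^{l_m}\sigma_m(j)$ in $\mathcal{M}(G_m)$ for each $m$. Applying the construction in the proof of \lemref{lem15} to $\llangle(G_m,\rho_m,k_m)\rrangle$ and $\llangle(G_m,\sigma_m,l_m)\rrangle$ gives, for every $m$, bases $B_m^\rho,B_m^\sigma$ of cofinite inescapable subspaces of the one–component trees and a bijection $\pi_m\colon B_m^\rho\to B_m^\sigma$ with $T(p)=T(\pi_m(p))$. Transplanting these along the leg–subtree identifications of the previous paragraph and taking the disjoint union over $m$ yields bases $B=\bigsqcup_m B_m^\rho$, $C=\bigsqcup_m B_m^\sigma$ of cofinite inescapable subspaces of $\mathcal{T}(G_\rho,S_\rho)$ and $\mathcal{T}(G_\sigma,S_\sigma)$ and a bijection $\pi\colon B\to C$ with $T(b)=T(\pi(b))$. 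Since $(G_\rho)_{T(b)}=(G_m)_{T(b)}=(G_\sigma)_{T(\pi(b))}$, \lemref{lem4} gives $\mathcal{T}(G_\rho,S_\rho)_b\cong_R\mathcal{T}(G_\sigma,S_\sigma)_{\pi(b)}$, and assembling these isomorphisms over the connected components of the two subspaces produces an isomorphism between them, i.e.\ $\mathcal{T}(G_\rho,S_\rho)\cong_A\mathcal{T}(G_\sigma,S_\sigma)$.

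For the ``only if'' direction, suppose $\mathcal{T}(G_\rho,S_\rho)\cong_A\mathcal{T}(G_\sigma,S_\sigma)$. By \lemref{lem2} choose cofinite inescapable subspaces with an isomorphism $\Phi$, and, enlarging the removed finite subtrees to contain the roots, assume their bases $B,C$ consist of nonempty walks; as in the proof of \lemref{lem5}, $\Phi$ restricts to a bijection $\pi\colon B\to C$, and by \lemref{lem4} we get $(G_\rho)_{T(b)}\cong_R(G_\sigma)_{T(\pi(b))}$. Each $(G_\rho)_{T(b)}$ equals $(G_m)_{T(b)}$ for the unique $m$ with $T(b)\in VG_m$ and is non-redundant (a non-edge-collapsing relation on it extends trivially to the non-redundant graph $G_m$); so by \coref{cor1} the rooted isomorphism above, together with non-redundancy, pins down $T(b)$ and $T(\pi(b))$ up to the relevant identification. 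Granting that $\pi$ therefore maps $B_m$ onto $C_m:=\{b'\in C\mid T(b')\in VG_m\}$ with $T(b)=T(\pi(b))$, we obtain $\sum_{b\in B_m}T(b)=\sum_{b'\in C_m}T(b')$ in $\mathcal{F}(G_m)$, hence in $\mathcal{M}(G_m)$; and \lemref{lem14}, applied to $\llangle(G_m,\rho_m,k_m)\rrangle$ and $\llangle(G_m,\sigma_m,l_m)\rrangle$ with the one–component subspaces whose bases are $B_m$ and $C_m$, gives $\sum_{i=1}^{k_m}\rho_m(i)=\sum_{b\in B_m}T(b)$ and $\sum_{j=1}^{l_m}\sigma_m(j)=\sum_{b'\in C_m}T(b')$ in $\mathcal{M}(G_m)$. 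Combining these yields the asserted equality.

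The transplantation of subspace isomorphisms and the monoid bookkeeping via \lemref{lem14} are routine. The step I expect to be the real obstacle is the one used in the converse direction: that the basis bijection $\pi$ respects the component decomposition, equivalently that $(G_m)_v\cong_R(G_{m'})_{v'}$ with $v\in VG_m$, $v'\in VG_{m'}$ forces $m=m'$. This is where the preceding normalisation of the components must be invoked, and one should verify that after it the $G_m$ are pairwise ``half-graph disjoint'', so that no basis element gets matched across components; without such a property the individual componentwise sums need not be determined. A conceptually cleaner route is to work inside the single graph $\hat G$ obtained from $\bigsqcup_m G_m$ by adjoining two roots $S_\rho,S_\sigma$ carrying the leg–families $\rho$ and $\sigma$: then $\mathcal{T}(\hat G,S_\rho)\cong_R\mathcal{T}(G_\rho,S_\rho)$ and $\mathcal{T}(\hat G,S_\sigma)\cong_R\mathcal{T}(G_\sigma,S_\sigma)$ (only descendants of a root matter), and after passing to a maximal non-edge-collapsing quotient one applies \thmref{thm3} together with \lemref{lem1}; it then remains to compare ``$S_\rho=S_\sigma$'' in the relevant graph monoid with the displayed family of equalities, using $\mathcal{M}(\bigsqcup_m G_m)=\bigoplus_m\mathcal{M}(G_m)$ and that a root with empty incoming neighbourhood can be eliminated from a presentation — and this last comparison is, once more, exactly where the distinctness of the components $G_m$ enters.
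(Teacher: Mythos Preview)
Your strategy—split into components and invoke \lemref{lem14} and \lemref{lem15}—is exactly the paper's. The paper organises it as: first treat $n=1$, then prove a claim that $\mathcal{T}(G_\rho,S_\rho)\cong_A\mathcal{T}(G_\sigma,S_\sigma)$ iff each pair $\mathcal{T}(\llangle(G_m,\rho_m,k_m)\rrangle),\mathcal{T}(\llangle(G_m,\sigma_m,l_m)\rrangle)$ is almost isomorphic; but the content matches yours.

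The step you flag as the obstacle is genuine and you do not close it, yet the missing ingredient is already in the hypotheses and you are not using it. You invoke only non-redundancy of each $G_m$; the theorem assumes non-redundancy of the \emph{spider products} $G_\rho,G_\sigma$, which is strictly stronger. The paper exploits it as follows. Since $S_\rho$ is a source, $(G_\rho)_{S_\rho}=G_\rho$ whereas $(G_\rho)_v\subsetneq G_\rho$ for $v\neq S_\rho$; hence $S_\rho$ is $\sim_o$-isolated. By \lemref{lem0} there is a non-edge-collapsing relation on $G_\rho$ with vertex part $\sim_o$; it satisfies $[S_\rho]=\{S_\rho\}$, so the spider-product non-redundancy hypothesis forces it to be trivial, i.e.\ $\sim_o$ is trivial on $VG_\rho\setminus\{S_\rho\}=\bigsqcup_m VG_m$. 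Now for $T(b)\neq S_\rho$ one has $(G_\sigma)_{T(\pi(b))}=(G_\rho)_{T(\pi(b))}$ (the two spider products agree away from their roots), so the rooted isomorphism $(G_\rho)_{T(b)}\cong_R(G_\sigma)_{T(\pi(b))}$ becomes a $\sim_o$-relation inside $G_\rho$ and gives $T(b)=T(\pi(b))$ outright—not merely that $\pi$ respects components. No separate ``half-graph disjointness'' of the $G_m$ has to be arranged; it is a consequence of the stated hypothesis. Your alternative route through $\hat G$ runs into the same issue at the quotient step and is unnecessary once the full hypothesis is used.
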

    \begin{proof}
        We will first prove the theorem in the case when $n=1$. For this set $G:=G_1$.\\
        The ``if'' direction of the implication follows directly from \Lemref{lem15}.\\
        For the converse implication we can take inescapable cofinite subspaces\\
         $\mathcal{S}_{\rho}\subsetneq \mathcal{T}(G_{\rho},S_{\rho})$, $\mathcal{S}_{\sigma}\subsetneq \mathcal{T}(G_{\sigma},S_{\sigma})$, that are isomorphic. Let $B_{\rho}$ and $B_{\sigma}$, be their bases, thus we have a bijection:
        \[\pi:B_{\rho}\to B_{\sigma}\]
        s.t.~$\forall b\in B_{\rho},\ \mathcal{T}(G_{\rho},S_{\rho})_b\cong_R\mathcal{T}(G_{\sigma},S_{\sigma})_{\pi(b)}$. However since $G$ is non-redundant and each $b\in B_{\rho} $ has $T(b)\in G$ (as the subspaces are not equal to the whole tree), we must have by \lemref{lem4} $T(b)=T(\pi(b))$. thus:
        \[\sum_{b\in B_{\rho}}T(b)=\sum_{b'\in B_{\sigma}}T(b')\]
        in $\mathcal{M}(G)$, so by \lemref{lem14} we have:
        \[\sum_{i=1}^{k_1}\rho_1(i)=\sum_{b\in B_{\rho}}T(b)=\sum_{b'\in B_{\sigma}}T(b')=\sum_{j=1}^{l_1}\sigma_1(j)\]
        in $\mathcal{M}(G)$, giving us the theorem in the case $n=1$.\\
        Now in order to show the general result it suffices to show the following claim:
        \begin{clm}\label{clm3}
            $\mathcal{T}(G_{\rho},S_{\rho})$ and $\mathcal{T}(G_{\sigma},S_{\sigma})$ are almost isomorphic if and only if for each $i\in\{1,\dots,n\}$: $\mathcal{T}(\llangle(G_i,\rho_i,k_i)\rrangle)$ and $\mathcal{T}(\llangle(G_i,\sigma_i,l_i)\rrangle)$ are almost isomorphic.
        \end{clm}
        \begin{proof}
            Here we will treat $\mathcal{T}(\llangle(G_i,\rho_i,k_i)\rrangle)$ and $\mathcal{T}(\llangle(G_i,\sigma_i,l_i)\rrangle)$ as subtrees of $\mathcal{T}(G_{\rho},S_{\rho})$ and $\mathcal{T}(G_{\sigma},S_{\sigma})$ respectively that contain the same root.\\
            Thus for the isomorphic cofinite inescapable subspaces $\mathcal{S}^{\rho}\subsetneq\mathcal{T}(G_{\rho},S_{\rho})$ and $\mathcal{S}^{\sigma}\subsetneq\mathcal{T}(G_{\sigma},S_{\sigma})$, the subgraphs $\mathcal{S}^{\rho}_i:=\mathcal{S}^{\rho}\cap\mathcal{T}(\llangle(G_i,\rho_i,k_i)\rrangle)\subseteq \mathcal{T}(\llangle(G_i,\rho_i,k_i))\rrangle$ and $\mathcal{S}^{\sigma}_i:=\mathcal{S}^{\sigma}\cap\mathcal{T}(\llangle(G_i,\sigma_i,k_i)\rrangle)\subseteq \mathcal{T}(\llangle(G_i,\sigma_i,k_i)\rrangle)$ are cofinite inescapable subspaces. Now if we take an isomorphism $\Phi:\mathcal{S}^{\rho}\to\mathcal{S}^{\sigma}$ we have for any path $p\in V\mathcal{S}^{\rho}$: $(G_{\rho})_{T(p)}\cong_R(G_{\sigma})_{T(\Phi(p))}$ as the graphs are non redundant.\\
            However since $\mathcal{S}_{\sigma}$ does not contain $\epsilon_{S_{\sigma}}$ we must have $T(\Phi(p))\neq S_{\sigma}$ giving us $(G_{\sigma})_{T(\Phi(p))}=(G_{\rho})_{T(\Phi(p))}$. So since $G_{\rho}$ is non redundant we have\\
            $T(p)=T(\Phi(p))$. So taking any $p\in \mathcal{S}^{\rho}_i$ we must have $T(\Phi(p))\in G_i$ and thus $\Phi(p)\in \llangle(G_i,\sigma_i,k_i)\rrangle$. So the function $\Phi|_{\mathcal{S}^{\rho}_i}$ is an isomorphism between $\mathcal{S}^{\rho}_i$ and $\mathcal{S}^{\sigma}_i$ showing that $\mathcal{T}(\llangle(G_i,\rho_i,k_i)\rrangle)$ and $\mathcal{T}(\llangle(G_i,\sigma_i,k_i)\rrangle)$ are almost isomorphic.\\
            For the converse implication take $\mathcal{S}_i^{\rho}$ and $\mathcal{S}_i^{\sigma}$ to be cofinite inescapable subspaces of $\mathcal{T}(\llangle(G_i,\rho_i,k_i)\rrangle)$ and $\mathcal{T}(\llangle(G_i,\sigma_i,l_i)\rrangle)$ resp. s.t. $\mathcal{S}_i^{\rho}\cong\mathcal{S}_i^{\sigma}$ for each $1\leq i\leq n$. Now let $\Phi_i:\mathcal{S}_i^{\rho}\to\mathcal{S}_i^{\sigma}$ be isomorphisms between the subspaces. Now we can define:
                \begin{align*}
                    \mathcal{S}^{\rho}:&=\bigcup_{i=1}^n \mathcal{S}_i^{\rho}\\
                    \mathcal{S}^{\sigma}:&=\bigcup_{i=1}^n \mathcal{S}_i^{\sigma}
                \end{align*}
            since all the $\mathcal{S}_i^{\rho},\mathcal{S}_i^{\sigma}$ are cofinite and inescapable, the subspaces $\mathcal{S}^{\rho}$ and $\mathcal{S}^{\sigma}$ are also cofinite and inescapable. We can define the function $\Phi:\mathcal{S}^{\rho}\to\mathcal{S}^{\sigma}$ by setting $\Phi|_{\mathcal{S}_i^{\rho}}:=\Phi_i$ for each $1\leq i\leq n$. $\Phi$ is well-defined since the intersection of all the $\mathcal{S}_i^{\rho}$ consists of just the vertex $\epsilon_{S_{\rho}}$ and for each $1\leq i\leq n$ we have $\Phi_i(\epsilon_{S_{\rho}})=\epsilon_{S_{\sigma}}$. Furthermore $\Phi$ is an isomorphism since all the $\Phi_i$ are. This shows that $\mathcal{T}(G_{\rho},S_{\rho})$ and $\mathcal{T}(G_{\sigma},S_{\sigma})$ are almost isomorphic.
        \end{proof}
        Now if for each for each $m\in\{1,\dots,n\}$ we have:
        \[\sum_{i=1}^{k_m}\rho_m(i)=\sum_{j=1}^{l_m}\sigma_m(j)\]
        than as we have already shown, each $\mathcal{T}(\llangle(G_m,\rho_m,k_m)\rrangle)$ and $\mathcal{T}(\llangle(G_m,\sigma_m,l_m)\rrangle)$ are almost isomorphic we get $\mathcal{T}(G_{\rho},S_{\rho})$ and $\mathcal{T}(G_{\sigma},S_{\sigma})$ are almost isomorphic.\\
        Conversely if $\mathcal{T}(G_{\rho},S_{\rho})$ and $\mathcal{T}(G_{\sigma},S_{\sigma})$ are almost isomorphic, we must have $\mathcal{T}(\llangle(G_m,\rho_m,k_m)\rrangle)$ and $\mathcal{T}(\llangle(G_m,\sigma_m,l_m)\rrangle)$ be almost isomorphic. thus as we have already shown for each $m\in\{1,\dots,n\}$:
            \[\sum_{i=1}^{k_m}\rho_m(i)=\sum_{j=1}^{l_m}\sigma_m(j)\]
    \end{proof}
    Since by  \Lemref{lem7} each label-regular directed tree is almost isomorphic to a non-redundant spider product with components that are robust and connected. By \Lemref{lem11} these components are unique up to isomorphism. Finally by \Thmref{thm4} we can change the vertices the root is connected to as long as their sums remain the same in the graph monoid. This allows us to associate each label-regular directed tree with an unique finite collection of connected robust non-redundant graphs s.t. their union is also non-redundant, together with a collection of members of their graph monoid that can be written as a sum of vertices s.t. their half graphs cover the whole graph.\\   
    Now we can see that in order to determine when two rooted graphs have almost isomorphic unfolding trees, we have to find via \lemref{lem14} a spider product of connected robust graphs that has almost isomorphic unfolding trees to the unfolding trees of the original graphs. Now we can reduce each of the spider products to being non-redundant spider products. Now if we don't have a one-to-one correspondence between isomorphic components of the spider products, the unfolding trees cannot be almost isomorphic. If such a correspondence exists we can compare the sums of the neighbors in each components (in the monoid of the component) as in \thmref{thm4}.\\
    This outlines an algorithm that determines when two graphs have almost isomorphic unfolding trees. As this algorithm involves solving the word problem in some commutative finitely generated monoid and this is a problem  with exponential complexity \cite{MAYR1982}, this might be an indication that checking whether two graphs have almost isomorphic unfolding trees is not solvable in polynomial time .
    \section{Example}
    As an example we can focus on the graphs that have only 2 vertices. \\
    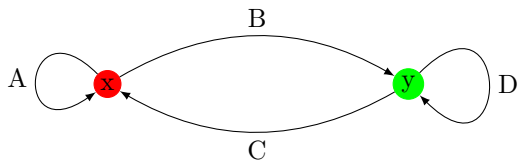
\begin{figure}[ht]
        \begin{center}
            \begin{tikzpicture}
                \node[red node]     (x) at (-2,0)   {x};
                \node[green node]   (y) at (2,0)    {y};

                \path[-latex]
                        (x)     edge[bend left]                         node[above]             {B} (y)
                                edge[out=135, in=215, looseness=15]     node[left]              {A} (x)
                        (y)     edge[bend left]                         node[below]             {C} (x)
                               edge[out=45, in=315, looseness=15]       node[right]             {D} (y)
                        ;    
            \end{tikzpicture}
        \end{center}
    \caption{A graph with two vertices with $A,B,C,D$ denoting the multiplicity of the edges}
    \end{figure}
    We will thus take the graph with the adjacency matrix  $\begin{pmatrix}
                                                                A & B\\
                                                                C & D\\    
                                                            \end{pmatrix}$  we will denote the  vertices of this graph by $x,y$.\\
    Since any non-trivial non-edge collapsing equivalence relation $\sim$ on this graph will have to satisfy $x\sim y$. Thus we must have $A+B=|E_o(x)|=|E_o(y)|=C+D$.\\
    Conversely if we in fact have $A+B=C+D=:N$, we will denote:
     \[E_o(x)=:\{e_1,\dots,e_N\} \text{ and } E_o(y)=:\{d_1,\dots,d_N\}\]
    We can define the equivalence relation $\sim$ by setting $x\sim y$ and:
    \[\forall i\in \{1,\dots,N\},\, e_i\sim d_i \]
    It is clear that this is in fact a non-edge collapsing relation. When we factor over this equivalence relation we get a graph  with only one vertex with $N$ loops, that is of course non-redundant. So two graphs $G,H$ with $2$ vertices each have isomorphic unfolding trees based on $x$ iff they have they have the adjacency matrices $\begin{pmatrix}
                            A & B\\
                            C & D\\    
                        \end{pmatrix}$ and $\begin{pmatrix}
                                                A' & B'\\
                                                C' & D'\\    
                                            \end{pmatrix}$ resp., s.t.
        \[A+B=C+D=A'+B'=C'+D'\]
        or $A=A'$, $B=B'$, $C=C'$ and $D=D'$.\\
    Now to check when two unfolding trees of graphs with two vertices are almost isomorphism we will assume that they are non-redundant i.e. $A+B\neq C+D$. Then if the unfolding tree based on any root of a graph of two vertices is isomorphic to another unfolding tree of a graph of two vertices, the graphs have to be isomorphic. Thus we only have to determine whether the unfolding tree based on one root is isomorphic to the one based on the other. Additionally in order $x$ and $y$ to both be roots we can assume that $B,C>0$. We will have to determine when the unfolding tree based on $x$ is almost isomorphic to the one based on $y$, by looking at when $x=y$ in the graph monoid. To do this we will apply the algorithm CPC from \cite[Chapter 6]{rosales1999} which allows us to generate a canonical presentation of a monoid that allows us to solve the word problem. We will furthermore assume that $A\geq D$ by swapping $x$ and $y$. \\
    First we will look at what happens when $D=0$, in this case the graph looks as follows:\\
        \begin{center}
            \begin{tikzpicture}
                \node[red node]     (x) at (-2,0)   {x};
                \node[green node]   (y) at (2,0)    {y};
                \path[-latex]
                (x)     edge[bend left]                         node[above]             {B} (y)
                        edge[out=135, in=215, looseness=15]     node[left]              {A} (x)
                (y)     edge[bend left]                         node[below]             {C} (x)
                ;    
            \end{tikzpicture}
        \end{center}
    The graph monoid is presented by the equations:
        \[  \begin{cases}
                x=Ax+By\\
                y=Cx
            \end{cases}\]
    these are equivalent to:
        \[  \begin{cases}
                x=(A+BC)x\\
                y=Cx
            \end{cases}\]
    Thus the monoid consist only of multiples of $x$ and we have $Mx=Nx$ iff $M,N\neq 0$ and $M\equiv N \mod A+CB$ or $M=N=0$. From here we can see that since $y=Cx$ in order to have $x=y$ we must have $C=1$, or $B=1$ and $A=0$. Which are thus the only cases when the unfolding trees based on $x$ and $y$ are almost isomorphic to each other.\\
    Now we will be able to assume that $A,D>0$, and by swapping $x$ and $y$ we may assume $A\geq D$.\\
    Our main family of examples are the ones where $B=1$ and $D=2$. Then the graph monoid is presented by:
    \begin{equation}\label{eq3}
        \begin{cases}
            x=Ax+y\\
            y=Cx+2y
        \end{cases}
    \end{equation}
     We can write $A=C+N+1$ for $N=A-C-1$. There is some $k$ and $0\leq r<N$ s.t. $kN=A+r$. Now repeating the calculations we did before we get:
    \[x=Cx+(A+r)x+y+x=(C+2+r)x\]
    so:
    \[x=(C+1+N)x+y=x+(N-r-1)x+y=(N-r)x+y\]
    and:
    \[(r+2)x+y=(C+r+2)x+2y=x+2y\]
    if we plug this into the second equation of \ref{eq3} we get:
    \begin{multline*}
        y=Cx+2y=(C-1)x+x+2y=(C-1)x+(r+2)x+y=(C+r+N)x+2y=\\
        (C+r+N-1)x+(x+2y)=(C+r+N-1)x+(2x+y)=(C+r+N+1)x+y=(r+1)x
    \end{multline*}
    we also have:
    \[x=(N-r)x+y=(N+1)x\]
    Now to show that $x\neq y$ we will have to show that the conditions of \ref{eq3} are implied by and thus equivalent to:
    \begin{equation}\label{eq34}
        \begin{cases}
            x=(N+1)x\\
            y=(r+1)x
        \end{cases}
    \end{equation}
    This can be shown by iterating:
    \[Ax+y=(A+r+1)x=(kN+1)x=((k-1)N+1)x=\dots=(0N+1)x=x\]
    and:
    \begin{multline*}
        Cx+2y=(C+2r+2)x=((C+1)+2r+1)x=(C+1+r+r+1)x=\\
        ((k-1)N+r+1)=((k-2)N+r+1)x=\dots=(r+1)x=y
    \end{multline*}
    So in order to have $Kx=Lx$ (for $K,L>0$) in the graph monoid we have to have $K\equiv L\mod N$. So we have $x=y$ if and only if $r+1\equiv 1\mod N$, and since $0\leq r<N$ we must have $r=0$. So the unfolding trees based on $x$ and on $y$ are almost isomorphic if and only if $N|A$, i.e. $A-C-1|A$.\\
    Now note that if we have a graph $G$ with two vertices and the adjacency matrix:            
    $\begin{pmatrix}
        A & B\\
        C & D\\    
    \end{pmatrix}$,
    with $A-C\geq0$ and $B-D\geq0$ then the graph with the adjacency matrix:
    $\begin{pmatrix}
        A-C & B-D+1\\
        C & D\\    
    \end{pmatrix}$
    have the same graph monoid to $G$, since:
        \[  \begin{cases}
                x=Ax+By\\
                y=Cx+Dy
            \end{cases}\]
    is equivalent to:
        \[  \begin{cases}
                x=(A-C)x+(B-D+1)y\\
                y=Cx+Dy
            \end{cases}\]
    Analogously if we have $C-A\geq0$ and $D-B\geq0$, the graph with the adjacency matrix:
    $\begin{pmatrix}
        A & B\\
        C-A+1 & D-B\\    
    \end{pmatrix}$, also has the same graph monoid as $G$. As long as $B,C>0$ these reductions will decrease $A+D$. So we can apply these reductions repeatedly to get to a graph with either $A=0$,$D=0$, or $(A-C)(B-D)<0$. The case when $A=0$ or $D=0$ was tackled before. Now if $(A-C)(B-D)<0$ the question becomes more difficult. I have checked all cases with $A,B,C,D\leq 100$ and there for every such graph with $A\geq D$ and $x=y$ in the graph monoid we have $B=1$ and $D=2$. This leads me to the conjecture:
    \begin{conj}
        For any graph $G$ on the vertex set $\{x,y\}$ and the adjacency matrix
       $\begin{pmatrix}
            A & B\\
            C & D\\    
        \end{pmatrix}$, with $A\geq D$, $A,B,C,D>0$ and $(A-C)(B-D)<0$, we have $x=y$ in $\mathcal{M}(G)$ if and only if $B=1$, $D=2$ and $(A-C-1)|A$.
    \end{conj}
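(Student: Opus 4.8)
The plan is to reduce the question to a linear Diophantine problem about the group completion of $\mathcal{M}(G)$, and then to run a short case analysis on the two ways the inequality $(A-C)(B-D)<0$ can occur once $A\ge D$. Write $M=\begin{pmatrix}A&B\\C&D\end{pmatrix}$; the defining relations of $\mathcal{M}(G)$ are $x=Ax+By$ and $y=Cx+Dy$, so its group completion is $\mathbb{Z}^{2}/\Lambda$ where $\Lambda=\langle(A-1,B),(C,D-1)\rangle$ (the column lattice of $I-M^{\mathsf T}$). Because $A,B,C,D>0$, the graph is strongly connected, every vertex has out-degree $\ge 2$, and every cycle has an exit; for such graphs the graph monoid is the one of a purely infinite simple Leavitt path algebra, hence $\mathcal{M}(G)\cong\{0\}\sqcup K_{0}=\{0\}\sqcup\operatorname{coker}(I-M^{\mathsf T})=\{0\}\sqcup\mathbb{Z}^{2}/\Lambda$, with $x\mapsto[e_{1}]$, $y\mapsto[e_{2}]$ (cf.\ \cite{ara2007}). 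Granting this, $x=y$ in $\mathcal{M}(G)$ is equivalent to $(1,-1)\in\Lambda$, i.e.\ to the solvability over $\mathbb{Z}$ of $s(A-1)+tC=1$ and $sB+t(D-1)=-1$.

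First I would dispose of the case $A<C$, $B>D$. Here $A-1<C$ and $D-1<B$, so $\delta:=(A-1)(D-1)-BC<0$; by Cramer's rule any integer solution forces $t=-(A+B-1)/\delta\in\mathbb{Z}$, hence $|\delta|\le A+B-1$, i.e.\ $BC-(A-1)(D-1)\le A+B-1$, which rearranges to $B(C-1)\le(A-1)D$. Using $C-1\ge A$ this gives $BA\le(A-1)D<AD$, so $B<D$, contradicting $B>D$. Thus the system is unsolvable and $x\ne y$; and the right-hand side of the conjecture also fails here, since $B>D\ge 1$ forces $B\ge 2$. So the two sides agree (both false) in this case.

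The substantive case is $A>C$, $B<D$, whence $A\ge D\ge 2$. Put $a=A-1\ge C\ge 1$, $d=D-1\ge B\ge 1$, with $a\ge d$; then $\delta=ad-BC\ge C(d-B)\ge 0$, and $\delta=0$ forces $a=C$, $d=B$, in which case $(1,-1)=(s+t)(C,B)$ has no integer solution and also $A-C-1=0$, so the conjecture's divisibility clause fails too. For $\delta\ge 1$, solvability is equivalent to $\delta\mid a+B$ and $\delta\mid C+d$; writing $a+B=\delta p$ and $C+d=\delta q$ with $p,q\ge 1$, the system becomes $aq-Cp=1$ and $dp-Bq=1$. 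The hypothesis that $G$ be non-redundant, i.e.\ $A+B\ne C+D$ — which is assumed throughout this section and which must be appended to the conjecture, since without it the graph $\begin{pmatrix}3&1\\1&3\end{pmatrix}$ has $x=y$ yet $D=3$ — translates precisely to $p\ne q$. If $p>q$ then $dp-Bq\ge d(p-q)\ge d$, forcing $d=1$; if $p<q$ then $aq-Cp\ge a(q-p)\ge a$, forcing $a=1$, hence $a=d=B=C=1$ and $A+B=C+D$, contradicting $p\ne q$. So $d=1$, i.e.\ $D=2$, and then $B=1$; now $\delta=a-C=A-C-1=:N$, and since $A=N+C+1$ the conditions $N\mid a+B=A$ and $N\mid C+d=C+1$ coincide, giving $x=y\iff N\mid A$. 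The ``if'' direction is this computation run backwards, and is essentially the one already carried out above for $B=1$, $D=2$.

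I expect the real work to sit in two places. The first is making the identification $\mathcal{M}(G)\cong\{0\}\sqcup\mathbb{Z}^{2}/\Lambda$ rigorous with the tools available here: that $\mathcal{M}(G)$ maps onto its group completion is immediate, but one must check this map is injective away from $0$, i.e.\ that monoid equality is already detected by $\mathbb{Z}^{2}/\Lambda$ — this is the ``every cycle has an exit''/proper-infiniteness content (both $x$ and $y$ satisfy $2x\le x$, $2y\le y$), and it should be extractable from the confluence description of \cite{ara2007} or simply cited from Leavitt path algebra structure theory. The second is a bookkeeping hazard rather than a deep obstacle: keeping the degenerate branches straight — $\delta=0$, $N=0$, and the redundant graphs with $A+B=C+D$ — and in particular restating the conjecture with the missing non-redundancy hypothesis, without which it is literally false.
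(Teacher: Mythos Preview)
The paper does not prove this statement: it is explicitly a \emph{conjecture}, supported only by a computer search over $A,B,C,D\le 100$ together with the hand calculation for the special case $B=1$, $D=2$ (which supplies the ``if'' direction). There is thus no proof in the paper to compare against; your proposal, if sound, settles the conjecture.

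And your argument is sound. The passage to the group completion $\mathds{Z}^{2}/\Lambda$ is legitimate because $A,B,C,D>0$ makes the graph strongly connected with every vertex of out-degree at least $2$, so the Leavitt path algebra is purely infinite simple and $\mathcal{M}(G)\setminus\{0\}\cong K_{0}=\operatorname{coker}(I-M^{t})$; once that is in hand, your Cramer's-rule reduction and the two inequality branches are correct --- I verified the arithmetic in each, including the key steps $B(C-1)\le (A-1)D$ in the first branch and $dp-Bq\ge d(p-q)\ge d$, $aq-Cp\ge a(q-p)\ge a$ in the second. This is a genuinely different route from anything in the paper: the paper works entirely inside the monoid via the rewriting lemma of \cite{ara2007} and explicit normal forms, which is exactly why it could handle only $B=1$, $D=2$ and had to leave the rest open. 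Your approach trades that combinatorics for one structural citation plus elementary linear algebra.

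Both caveats you flag are real, and you have them right. Your counterexample $\bigl(\begin{smallmatrix}3&1\\1&3\end{smallmatrix}\bigr)$ does satisfy all the stated hypotheses yet has $x=y$ with $D=3$, so the non-redundancy assumption $A+B\ne C+D$ (standing earlier in the section but omitted from the conjecture itself) must be added to the statement. And the identification $\mathcal{M}(G)\cong\{0\}\sqcup K_{0}$ is indeed the one nontrivial external input; it is standard in the Leavitt-path-algebra literature but lies outside the paper's self-contained development, so it should be cited precisely rather than asserted parenthetically.
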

    \section{Acknowledgments} 
    I’d like to thank my PhD supervisors: George Willis, Colin Reid and Stephan Tornier for advising me during the writing of this paper and proofreading it. I’d also like to thank Roozbeh Hazrat for introducing me to the graph monoid, during a "Symmetry in Newcastle" seminar. The calculations for the last chapter, were made in GAP \cite{GAP4} and C++. The writing of this paper was supported by the grant FL170100032 form the Australian Research Council.

    \bibliographystyle{plain} 
    \bibliography{bibliography}
\end{document}